\newtheorem{thm}{Theorem}[section]
\newtheorem{cor}{Corollary}[section]
\newtheorem{defn}{Definition}[section]
\newtheorem{prop}{Proposition}[section]
\newtheorem{lemma}{Lemma}[section]
\newtheorem{rmk}{Remark}[section]
\newcommand{\aiminabs}[1]{ | #1 |}
\newcommand{\aiminnorm}[1]{\big\lVert#1\big\rVert}
\newcommand{\aimininner}[2]{\langle #1, #2 \rangle}
\newcommand{\aiminRmnum}[1]{ \uppercase\expandafter{\romannumeral  #1}}
\numberwithin{equation}{section}
\numberwithin{figure}{section}
\begin{document}
\title{The nonlinear 2D supercritical inviscid shallow water equations in a rectangle}
\author{Aimin Huang}
\author{Madalina Petcu}
\author{Roger Temam}
\address{The Institute for Scientific Computing and Applied Mathematics, Indiana University, 831 East Third Street, Rawles Hall, Bloomington, Indiana 47405, U.S.A.}
\address{Laboratoire de Math\'ematiques et applications, Univ. de Poitiers, Teleport 2-BP 30179, Boulevard Marie et Pierre Curie, 86962 Futuroscope Chasseneuil Cedex, France}
\email{AH:huangepn@gmail.com}
\email{MP:Madalina.Petcu@math.univ-poitiers.fr}
\email{RT:temam@indiana.edu}
\date{\today}

\keywords{Shallow water equations, inviscid flow, initial and boundary value problems}
\begin{abstract}

In this article we consider the inviscid two-dimensional shallow water equations in a rectangle. The flow occurs near a stationary solution in the so called supercritical regime and we establish short term existence of smooth solutions for the corresponding initial and boundary value problem.
\end{abstract}

\maketitle
\section{Introduction}\label{sec-intro}
Motivated by the study of the inviscid primitive equations, we consider in this article the inviscid two-dimensional shallow water equations in a rectangle in the so-called supercritical regime. It has been shown that a certain vertical expansion of the inviscid primitive equations leads to a system of coupled nonlinear equations similar to the inviscid shallow water equations; see \cite{RTT08b} and \cite{HT14a}. Hence beside their intrinsic interest, the nonlinear shallow water equations can be seen as one mode of the vertical expansion of the primitive equations.

The issue of the boundary conditions to be associated with the primitive or shallow water equations has been emphasized as a major problem and limitation for the so-called Local Area Models for which weather predictions are sought and simulations are performed within a domain for which the boundary has no physical significance, so that there are no physical laws prescribing the boundary conditions (see \cite{WPT97} and e.g. \cite{RTT08a,RTT08b}, \cite{CSTT12,SLTT}).
The choice of the boundary conditions relies then on mathematical considerations
(derivation of a well-posed mathematical problem), and on general computational considerations and physical intuition. The boundary conditions suitable for the one-dimensional shallow water equations were derived in an intuitive context in the book of Whitham \cite{Whi99} and in \cite{NHF08}; see \cite{PT13} for a rigorous study. For general results on boundary value problems for quasilinear hyperbolic system in space dimension one see \cite{LY85}; for initial and boundary value problems for hyperbolic equations in smooth domain see the thorough book \cite{BS07}. The present article follows the study of the one-dimensional inviscid shallow water equations in \cite{PT13,HPT11} and the study of the linearized shallow water equations in \cite{HT14a}. In the study of the linearized inviscid shallow water equations in \cite{HT14a} we have shown that five cases can occur depending on the respective values of the velocity and the height (not counting the non-generic cases and the symmetries). The nonlinear case that we consider in this article relates to what was called the supercritical case in \cite{HT14a}; see \cite{HT14b} for the study of a subcritical case.

In this article, we consider the inviscid fully nonlinear 2D shallow water equations (SWE) 
\begin{equation}\label{eq1.1}
\begin{cases}
u_t+uu_x + vu_y + g\phi_x -fv= 0, \\
v_t+uv_x + vv_y + g\phi_y + fu= 0, \\
\phi_t+u\phi_x + v\phi_y + \phi(u_x+v_y) = 0;
\end{cases}
\end{equation}
here $U=(u,v,\phi)^t,(x,y)\in \Omega:=(0,L_1)\times(0,L_2), t\in (0, T)$, $u$ and $v$ are the two horizontal components of the velocity, $\phi$ is the height of the water, and $g$ is the gravitational acceleration, $f$ is the Coriolis parameter.
The first and second equations \eqref{eq1.1} are derived from the equations of conservation of horizontal momentum, and the third one expresses the conservation of mass.
We consider equations \eqref{eq1.1} for certain values of $u,v,\phi$ as described below, corresponding to a "supercritical" flow and we associate with \eqref{eq1.1}, initial conditions for $u,v,\phi$ and boundary conditions at $x=0$ and $y=0$, $u,v,\phi$ vanishing on that part of the boundary.

This article is organized as follows. After this introductory section, we derive in Section \ref{sec-density} suitable density theorems, density of certain smooth functions in certain function spaces of Sobolev type. Section \ref{sec-swe-operator} is devoted to the modified (symmetrized) SWE operator for the time-independent and the time-dependent cases. It prepares Section \ref{sec-linear} in which we deal with the linear SWE, linearized around a non-constant time-dependent flow unlike in \cite{HT14a} where the background flow is time-independent. In Section \ref{sec-linear} we prove the well-posedness of the linearized SWE at the price of a loss of derivatives (see Theorem \ref{thm6.1}), and then the well-posedness of the linearized SWE in a short time without a loss of derivatives (see Theorem \ref{thm6.2}). Section \ref{sec-nonlinear-swe} considers the fully nonlinear SWE, for which the local well-posedness result is obtained. In the Appendices \ref{sec-semigroup} and \ref{sec-classical-lemmas}, we collect some useful theorems about semigroup and evolution systems, and several classical estimates about functions in Sobolev spaces. 

%
%

\section{The density theorems}\label{sec-density}
In this section, we establish general density theorems for certain Sobolev spaces; the results supplement and complement those of Section 3 in \cite{HT14a} which we recall when needed. These theorems have independent interest, and also will be needed for proving later on that $-A$ generates a quasi-contraction semigroup on certain Sobolev spaces, where $A$ is the 2D modified SWE operator  associated with suitable boundary conditions.

Throughout this section, let $m$ be a non-negative integer and let $\lambda=\lambda(x,y)$ satisfy 
\begin{equation}\label{asp3.1}
c_0 \leq \lambda(x,y) \leq c_1, \\
\end{equation}
where $c_0,c_1$ are positive constants. Furthermore, we say that $\lambda=\lambda(x,y)$ satisfies the positive $m$-condition ($m\text{ integer }\geq 0$) if $\lambda$ satisfies \eqref{asp3.1} and 
\begin{equation}\label{asp3.2}
\begin{cases}
\nabla \lambda \in L^\infty(\Omega),\,\,\,\text{for }m=0,1,\\
\lambda\in H^{3 \vee m}(\Omega),\,\,\text{for }m\geq 2,\\
\end{cases}
\end{equation}
where $a\vee b=\text{max}(a,b)$.
It is easy to see that if $m_1\geq m_2$ and $\lambda$ satisfies the positive $m_1$-condition, then $\lambda$ also satisfies the positive $m_2$-condition.

We now set for any function $\theta=\theta(x,y)$, $T\theta=\lambda(x,y) \theta_x + \theta_y$, where $\lambda$ is assumed to satisfy the positive $m$-condition, and introduce the function space
\begin{equation}\nonumber
\mathcal{X}_1^m(\Omega)=\{ \theta\in H^m(\Omega)\,:\, T\theta=\lambda \theta_x + \theta_y\in H^m(\Omega) \}.
\end{equation}

We observe that $\mathcal{X}_1^m(\Omega)$ is a space of local type, that is
\begin{equation}\label{eq3.e1}
 \text{If }\theta\in \mathcal{X}_1^m(\Omega), \psi\in\mathcal{C}^\infty(\overline\Omega), \text{ then } \theta\psi\in\mathcal{X}_1^m(\Omega).
\end{equation}
This property follows from $T(\psi\theta)=\psi T\theta+(\lambda\psi_x+\psi_y)\theta$, and $\lambda\theta$ (and hence $(\lambda\psi_x+\psi_y)\theta$) is in $H^m(\Omega)$ because of Lemma \ref{lemb.1} $i)$.
\\

We give an equivalent characterization of the space $\mathcal{X}_1^m(\Omega)$. In the following and throughout this article, we let $\partial^\alpha=\partial_x^{\alpha_1}\partial_y^{\alpha_2}$ with $\alpha=(\alpha_1,\alpha_2)$ and set $\aiminabs{\alpha}=\alpha_1+\alpha_2$. We also denote by $[\partial^\alpha, f]$ the commutator $[\partial^\alpha, f]g=\partial^\alpha(fg)-f\partial^\alpha g$.
\begin{prop}\label{prop3.1}
We assume that $\lambda$ satisfies the positive $m$-condition. Then
\begin{equation}\label{eq3a.1a}
\mathcal{X}_1^m(\Omega)=\{ \theta\in H^m(\Omega)\,:\, T\partial^\alpha\theta\in L^2(\Omega),\,\forall\, \aiminabs{\alpha}=m \}.
\end{equation}
\end{prop}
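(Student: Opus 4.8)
The plan is to prove the two inclusions separately; the inclusion $\mathcal{X}_1^m(\Omega)\subseteq\{\theta\in H^m(\Omega):T\partial^\alpha\theta\in L^2(\Omega),\ \forall|\alpha|=m\}$ is the easy one. Given $\theta\in\mathcal{X}_1^m(\Omega)$ and a multi-index $\alpha$ with $|\alpha|=m$, write $T\partial^\alpha\theta = \partial^\alpha(T\theta) - [\partial^\alpha,T]\theta = \partial^\alpha(T\theta) - [\partial^\alpha,\lambda]\theta_x$, since the $\theta_y$ part of $T$ has constant coefficient and commutes with $\partial^\alpha$. The first term is in $L^2(\Omega)$ because $T\theta\in H^m(\Omega)$. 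For the commutator term, I would expand $[\partial^\alpha,\lambda]\theta_x$ by the Leibniz rule into a sum of terms $\partial^\beta\lambda\,\partial^{\gamma}\theta_x$ with $\beta+\gamma=\alpha$, $|\beta|\geq 1$, so $|\gamma|\leq m-1$ and hence $\partial^\gamma\theta_x\in H^0=L^2$ at worst when $|\gamma|=m-1$; combined with the positive $m$-condition on $\lambda$ (which for $m\geq 2$ gives $\lambda\in H^{3\vee m}$, and for $m=0,1$ gives $\nabla\lambda\in L^\infty$) and the product/commutator estimates of Lemma~\ref{lemb.1}, each such term lies in $L^2(\Omega)$. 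This gives $T\partial^\alpha\theta\in L^2(\Omega)$.

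The reverse inclusion is the substantive direction: we are given $\theta\in H^m(\Omega)$ with $T\partial^\alpha\theta\in L^2(\Omega)$ for all $|\alpha|=m$, and must show $T\theta\in H^m(\Omega)$, i.e. $\partial^\alpha(T\theta)\in L^2(\Omega)$ for all $|\alpha|\leq m$. I would argue by downward examination of the derivative count. Using the identity $\partial^\alpha(T\theta) = T\partial^\alpha\theta + [\partial^\alpha,\lambda]\theta_x$, the issue is again entirely the commutator $[\partial^\alpha,\lambda]\theta_x$, which as above is a sum of terms $\partial^\beta\lambda\,\partial^\gamma\theta_x$ with $|\beta|\geq 1$ and $\beta+\gamma=\alpha$. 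When $|\alpha|=m$ these terms involve $\partial^\gamma\theta_x$ with $|\gamma|\le m-1$, hence $\partial^\gamma\theta_x\in L^2(\Omega)$ since $\theta\in H^m$; so $[\partial^\alpha,\lambda]\theta_x\in L^2(\Omega)$ by the hypotheses on $\lambda$ and Lemma~\ref{lemb.1}, and therefore $\partial^\alpha(T\theta)\in L^2(\Omega)$ for $|\alpha|=m$. For $|\alpha|<m$, $\partial^\alpha(T\theta)\in L^2(\Omega)$ is immediate because $\theta\in H^m$ forces $\lambda\theta_x+\theta_y\in H^{m}\subseteq H^{|\alpha|+1}$ modulo the regularity of $\lambda$; more carefully one checks that $\lambda\theta_x\in H^{m-1}(\Omega)$ using Lemma~\ref{lemb.1}~$i)$, which already handles all derivatives of order $\le m-1$. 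Combining, $T\theta\in H^m(\Omega)$, so $\theta\in\mathcal{X}_1^m(\Omega)$.

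The key technical point — and the main obstacle — is the bookkeeping on the commutator $[\partial^\alpha,\lambda]$ under the relatively weak regularity assumed on $\lambda$: for $m=0,1$ we have only $\nabla\lambda\in L^\infty(\Omega)$, which is exactly what is needed since the commutator then contains a single derivative of $\lambda$; for $m\geq2$ the assumption $\lambda\in H^{3\vee m}$ is the right one to absorb up to $m$ derivatives landing on $\lambda$ while the remaining derivatives on $\theta_x$ keep it in a high-enough Sobolev space, via the Sobolev multiplication estimates collected in Appendix~\ref{sec-classical-lemmas}. I would isolate this as the one computation to do with care, checking the worst case $|\beta|=m$, $|\gamma|=0$ (needing $\partial^\alpha\lambda\in L^2$ and $\theta_x\in L^\infty$, or a Hölder pairing $H^{3\vee m}\cdot H^{m-1}\hookrightarrow L^2$ in 2D) and the case $|\beta|=1$, $|\gamma|=m-1$ (needing $\nabla\lambda\in L^\infty$, $\partial^\gamma\theta_x\in L^2$), with the intermediate cases following by interpolation-type product estimates. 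Everything else is the algebraic identity $\partial^\alpha(T\theta)=T\partial^\alpha\theta+[\partial^\alpha,\lambda]\theta_x$ and the observation that the $\theta_y$-part of $T$ has constant coefficients.
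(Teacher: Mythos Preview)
Your proposal is correct and follows essentially the same route as the paper: both hinge on the identity $\partial^\alpha(T\theta)=T\partial^\alpha\theta+[\partial^\alpha,\lambda]\theta_x$ and the fact that the commutator lies in $L^2(\Omega)$ whenever $\theta\in H^m(\Omega)$ under the positive $m$-condition. The only cosmetic differences are that the paper treats both inclusions at once as an ``if and only if'' (since the commutator bound depends only on $\theta\in H^m$), and it invokes Lemma~\ref{lemb.1}\,\ref{en:item2}) directly for the commutator estimate rather than expanding via Leibniz as you do.
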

\begin{proof}
It is clear that \eqref{eq3a.1a} holds when $m=0$. For $m=1$, we observe that
\begin{equation}\label{eq3.1.1}
T\partial^\alpha\theta = \partial^\alpha(T\theta) - (\partial^\alpha \lambda) \theta_x,
\end{equation}
where $\partial^\alpha=\partial_x$ or $\partial_y$. Then if $\theta\in H^1(\Omega)$, $(\partial^\alpha\lambda)\theta_x$ belongs to $L^2(\Omega)$ since $\nabla\lambda$ is bounded, and by \eqref{eq3.1.1}, $T\partial^\alpha\theta$ belongs to $L^2(\Omega)$ if and only if $\partial^\alpha(T\theta)$ belongs to $L^2(\Omega)$; \eqref{eq3a.1a} follows for $m=1$.

For $m\geq 2$, we observe that
\begin{equation}\label{eq3.1.2}
T\partial^\alpha\theta =\partial^\alpha(T\theta) - [\partial^\alpha, \lambda]\theta_x,
\end{equation}
holds for all $\aiminabs{\alpha}=m$. We note that for $\theta\in H^m(\Omega)$, $[\partial^\alpha,\lambda]\theta_x$ belongs to $L^2(\Omega)$ from Lemma \ref{lemb.1} $\ref{en:item2})$ with $d=2$, $a=\lambda$ and $k=3\vee m$.
Hence by \eqref{eq3.1.2}, for $\aiminabs{\alpha}=m$, $T\partial^\alpha\theta$ belongs to $L^2(\Omega)$ if and only if $\partial^\alpha(T\theta)$ belongs to $L^2(\Omega)$, and \eqref{eq3.1} follows as well for $m\geq 2$.

\end{proof}

Now, we need to show that the smooth functions are dense in $\mathcal{X}_1^m(\Omega)$. Later on we will prove more involved density theorems, showing that if $u\in\mathcal{X}_1^m(\Omega)$ vanishes on certain parts of $\partial\Omega$, then $u$ can be approximated in $\mathcal{X}_1^m(\Omega)$ by smooth functions, vanishing on the same parts of the boundary. 
For the moment, we prove the following:
\begin{prop}\label{prop3.2}
$\mathcal{C}^\infty(\overline\Omega)\cap\mathcal{X}_1^m(\Omega)$ is dense in $\mathcal{X}_1^m(\Omega)$.
\end{prop}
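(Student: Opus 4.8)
The plan is to prove this density result by a standard three-step combination of localization, translation, and mollification, the classical Friedrichs-type argument adapted to the vector field $T = \lambda\partial_x + \partial_y$. First I would use a smooth partition of unity $\{\psi_j\}$ subordinate to a finite cover of $\overline\Omega$ by (relatively) open sets, each of which is either interior to $\Omega$ or meets only one edge or one corner of the rectangle. By the local-type property \eqref{eq3.e1}, if $\theta\in\mathcal{X}_1^m(\Omega)$ then each $\psi_j\theta\in\mathcal{X}_1^m(\Omega)$, and it suffices to approximate each piece separately. On an interior patch, ordinary mollification $\theta*\rho_\varepsilon$ converges in $H^m$, and one checks using Proposition \ref{prop3.1} that $T\partial^\alpha(\theta*\rho_\varepsilon)\to T\partial^\alpha\theta$ in $L^2$ for $|\alpha|=m$, using that $T(\theta*\rho_\varepsilon) - (T\theta)*\rho_\varepsilon$ is a commutator term involving $\nabla\lambda$ that tends to zero in $L^2$ (the Friedrichs lemma, essentially Lemma \ref{lemb.1} applied to $\lambda$; this is why the positive $m$-condition on $\lambda$ is assumed).

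For a patch meeting a single edge, say the edge $x=0$, one first translates the function into the domain: set $\theta^{(h)}(x,y)=(\psi_j\theta)(x-h,y)$ for small $h>0$, so that $\theta^{(h)}$ is supported at distance $\gtrsim h$ from that edge; then $\theta^{(h)}\to\psi_j\theta$ in $H^m(\Omega)$, and one must also verify $T\partial^\alpha\theta^{(h)}\to T\partial^\alpha(\psi_j\theta)$ in $L^2$. This is the point where $T$ does not simply commute with translation in $x$, since $\lambda$ depends on $x$; writing $T\theta^{(h)}(x,y) = \lambda(x,y)\partial_x\theta^{(h)} + \partial_y\theta^{(h)}$ and comparing with $\big((T(\psi_j\theta))(\cdot-h,\cdot)\big)(x,y) = \lambda(x-h,y)\partial_x\theta^{(h)} + \partial_y\theta^{(h)}$, the difference is $(\lambda(x,y)-\lambda(x-h,y))\partial_x\theta^{(h)}$, which is $O(h)$ in $L^\infty$ times a bounded-in-$L^2$ factor after one more differentiation — here one uses $|\alpha|=m$ and the $H^{3\vee m}$ regularity of $\lambda$ to control $\partial^\alpha$ of the product, exactly as in the proof of Proposition \ref{prop3.1}. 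Then one mollifies the shifted, compactly-supported-away-from-the-edge function and passes to the limit $\varepsilon\to 0$ before $h\to 0$. For the edges $x=L_1$, $y=0$, $y=L_2$ the translation is in the appropriate direction; at a corner one translates diagonally into the domain so as to move away from both incident edges simultaneously. Note that for the edges $y=0$ and $y=L_2$ one must be slightly more careful because $T$ contains $\partial_y$ with a fixed (constant) coefficient, but a pure $y$-translation commutes with $\partial_y$ and only shifts the argument of $\lambda$, so the same $O(h)$ estimate on $(\lambda(x,y\pm h)-\lambda(x,y))\partial_x\theta^{(h)}$ applies.

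Assembling the pieces: given $\theta\in\mathcal{X}_1^m(\Omega)$ and $\delta>0$, for each $j$ pick $h_j$ then $\varepsilon_j$ so that the smooth function $\theta_j^\delta := (\psi_j\theta)^{(h_j)}*\rho_{\varepsilon_j}\in\mathcal{C}^\infty(\overline\Omega)$ satisfies $\|\theta_j^\delta - \psi_j\theta\|_{H^m} + \|T\partial^\alpha(\theta_j^\delta-\psi_j\theta)\|_{L^2}<\delta/N$ for all $|\alpha|=m$; then $\sum_j\theta_j^\delta$ is smooth on $\overline\Omega$ and lies within $C\delta$ of $\theta$ in the graph norm $\|\theta\|_{\mathcal{X}_1^m}^2 := \|\theta\|_{H^m}^2 + \|T\theta\|_{H^m}^2$, where I have used Proposition \ref{prop3.1} to reduce control of $\|T(\theta_j^\delta-\psi_j\theta)\|_{H^m}$ to control of the $L^2$ norms of $T\partial^\alpha$ of the difference together with the already-established $H^m$ convergence. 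The main obstacle, and the only place where real care is needed, is the verification that translation-then-mollification produces convergence in the $T$-part of the norm at the edges: this requires the commutator/finite-difference estimate on $(\lambda(x,y)-\lambda(\text{shifted}))\,\partial^\alpha\partial_x\theta$, which is precisely where the hypothesis that $\lambda$ satisfies the positive $m$-condition (so $\lambda\in H^{3\vee m}$, hence $\nabla\lambda\in L^\infty$ and higher derivatives controlled via Lemma \ref{lemb.1}) is indispensable; everything else is the routine Friedrichs-mollifier machinery.
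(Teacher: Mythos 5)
Your overall plan — localize with a partition of unity, handle interior patches by mollification plus Friedrichs' lemma, and handle boundary patches by first getting the function defined in a neighborhood of the relevant part of $\partial\Omega$ and then mollifying, with a finite-difference estimate controlling the shift-commutator with $\lambda$ — is a sound strategy and a genuine alternative to what the paper does. The paper never translates: it extends $\psi_j\theta$ to $\hat v=P(\psi_j\theta)\in H^m(\mathbb R^2)$ with the Grisvard extension operator, writes $T\partial^\alpha\hat v=\widetilde{T\partial^\alpha(\psi_j\theta)}+\mu^\alpha$ with $\mu^\alpha$ supported outside $\Omega$, and mollifies with a $\rho$ compactly supported in $\{x<0,\,y<0\}$ at the corner (or the appropriate half-plane at an edge) so that $\rho_\epsilon*\mu^\alpha$ stays outside $\Omega$; Friedrichs' lemma then disposes of the $[T,\rho_\epsilon*]$ commutator.

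However, your translation step is backwards, and this is a genuine gap. Writing $\theta^{(h)}(x,y)=(\psi_j\theta)(x-h,y)$ and asserting that $\theta^{(h)}$ is "supported at distance $\gtrsim h$ from the edge" only makes sense for the zero extension of $\psi_j\theta$ shifted into $\Omega$. But elements of $\mathcal X_1^m(\Omega)$ do not vanish on $\partial\Omega$, so the zero extension has a jump across $\{x=0\}$; after the shift, this jump sits along $\{x=h\}\subset\Omega$, and $\theta^{(h)}\notin H^1(\Omega)$ for any $h>0$ (take $\psi_j\theta\equiv 1$ near a boundary point). Hence the claimed convergence $\theta^{(h)}\to\psi_j\theta$ in $H^m(\Omega)$ is false, and mollifying $\theta^{(h)}$ at scale $\epsilon<h$ produces functions whose $H^m$ norms blow up as $\epsilon\to 0$ at fixed $h$. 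The inward-shift-then-mollify device is the right one for showing $\mathcal C^\infty_c(\Omega)$ is dense in $H^m_0(\Omega)$, where the vanishing trace makes the zero extension lie in $H^m(\mathbb R^2)$; it is the wrong tool here. What you want is the outward shift $\theta^{(h)}(x,y):=(\psi_j\theta)(x+h,y)$, defined on $(-h,L_1-h)\times(0,L_2)$, which covers a neighborhood of $\overline\Omega\cap\operatorname{supp}\psi_j$ near $\{x=0\}$; then $\theta^{(h)}\to\psi_j\theta$ in $H^m(\Omega)$, mollification at scale $\epsilon<h$ gives a function smooth up to $\{x=0\}$ (shift diagonally at corners), and your estimate on $\big(\lambda(x,y)-\lambda(x+h,y)\big)\partial_x\theta^{(h)}$ combined with Friedrichs' lemma controls the $T$-part of the graph norm, as you intended. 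With that correction, your Meyers--Serrin-type argument is a valid alternative to the paper's extension-plus-directional-mollifier proof.
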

\begin{proof}
Using a proper covering of $\Omega$ by sets $\mathcal{O}_0,\mathcal{O}_1,\cdots,\mathcal{O}_N$, we consider a partition of unity subordinated to this covering, $1=\sum_{i=0}^N \psi_i$. Here and again in this section we will use a covering of $\Omega$ consisting of $\mathcal{O}_0$, a relatively compact subset of $\Omega$, and of sets $\mathcal{O}_i$ of one of the following types: $\mathcal{O}_i$ is a ball centered at one of the corners of $\Omega$, which does not intersect the two other sides of $\Omega$; or $\mathcal{O}_i$ is a ball centered on one of the sides of $\Omega$ which does not intersect any of the three other sides of $\Omega$.

If $\theta\in\mathcal{X}_1^m(\Omega)$, then $\theta\psi_i\in\mathcal{X}_1^m(\Omega)$ by \eqref{eq3.e1}, so that we only need to approximate $\theta\psi_i$ by smooth functions. Here the support of $\psi_i$ is contained in the set $\mathcal{O}_i$, and we start with considering the set $\mathcal{O}_0$, relatively compact in $\Omega$, then we consider the balls $\mathcal{O}_i$ centered on the boundary $\partial\Omega$.

For any function $v$ defined on $\Omega$, here and again in the following we denote by $\tilde v$ the function equal to $v$ in $\Omega$ and to $0$ in $\mathbb{R}^2\backslash\Omega$. 
We first consider the case $\psi_i=\psi_0$ and $\mathcal{O}_i =\mathcal{O}_0$ which is relatively compact in $\Omega$.
Let $\rho$ be a mollifier such that $\rho\geq 0, \int\rho=1$, and $\rho$ has compact support. 

$i)$ The function $v=\theta\psi_0\in\mathcal{X}_1^m(\Omega)$ has compact support in $\mathcal{O}_0$. Since $\mathcal{O}_0$ is relatively compact in $\Omega$, then for $\epsilon$ small enough, $\rho_\epsilon*v$ is supported in $\Omega$. Using the characterization \eqref{eq3.1} for $v$, the standard mollifier theory (see e.g. Appendix C in \cite{Eva98}) shows that for $\epsilon\rightarrow 0$:
\begin{equation}\label{eq05}
\begin{cases}
\rho_\epsilon* v \rightarrow v,\hspace{6pt} \text{in }H^m(\Omega),\\
\rho_\epsilon*T\partial^\alpha v\rightarrow T\partial^\alpha v,\hspace{6pt} \text{in }L^2(\Omega),\,\forall\, \aiminabs{\alpha}=m.
\end{cases}
\end{equation}
Since the convolution and the operator $T$ do not commute in the non-constant coefficient case, we need the following Friedrichs' lemma (see \cite{Fri44} or \cite[Theorem 3.1]{Hor61}).
\begin{lemma}\label{lem3.2extra}
Let ${\mathcal U}$ be an open set of $\mathbb{R}^d$.
If $\nabla a\in L^\infty(\mathcal U)$ and $u\in L_{\text{loc}}^2(\mathcal U)$, then for all $1\leq j\leq d$,
\begin{equation}\nonumber
a \partial_{x_j}(u*\rho_\epsilon) - (a\partial_{x_j} u)*\rho_\epsilon \rightarrow 0, \text{ when }\epsilon\rightarrow 0,
\end{equation}
in the sense of $L^2$ convergence on all compact subsets of $\mathcal{U}$.
\end{lemma}
We then continue the proof of Proposition \ref{prop3.2}. 
Noting that $v=\theta\psi_0$ has compact support in $\Omega$, we apply Lemma \ref{lem3.2extra} with $\mathcal{U}=\Omega, a=\lambda$ and $u=\partial^\alpha v$; we obtain 
\begin{equation}\label{eq06extra2}
T(\rho_\epsilon*\partial^\alpha v) - \rho_\epsilon*T\partial^\alpha v \rightarrow 0,\,\forall\, \aiminabs{\alpha}=m,
\end{equation}
in $L^2(\Omega)$ as $\epsilon\rightarrow 0$.
Combining \eqref{eq05}$_2$ and \eqref{eq06extra2}, we obtain that as $\epsilon\rightarrow 0$,
\begin{equation}\label{eq06}
T(\rho_\epsilon*\partial^\alpha v) \rightarrow T\partial^\alpha v,\,\text{in }L^2(\Omega),\,\forall\, \aiminabs{\alpha}=m.
\end{equation}
Therefore, $\rho_\epsilon*v$ converges to $v$ in $\mathcal{X}_1^m(\Omega)$ by \eqref{eq3a.1a}, $\eqref{eq05}_1$ and \eqref{eq06}.

$ii)$ We then consider the case where $\psi_i=\psi_1$, and $\mathcal{O}_i =\mathcal{O}_1$ which is a ball centered at the origin $(0,0)$; the other cases are similar or simpler. Set $v=\theta\psi_1$, and note that $v$ does not vanish in general on the boundary $\partial\Omega$ of $\Omega$. In order to extend $v$ to the whole space $\mathbb{R}^2$, we use a well known extension result (see e.g. \cite[Theorem 1.4.3.1]{Gri85}).
\begin{lemma}[Extension Theorem]\label{lem3.2}
Since the boundary $\partial\Omega$ of the domain $\Omega$ is Lipschitz continuous, there exists a continuous linear operator $P=P_m$ from $H^m(\Omega)$ into $H^m(\mathbb{R}^2)$ such that for all $u\in H^m(\Omega)$, the restriction of $Pu$ to $\Omega$ is $u$ itself, i.e.
\begin{equation}\nonumber
Pu|_\Omega = u.
\end{equation}
\end{lemma}

We denote by $\hat v$ the extension $Pv$ given in Lemma \ref{lem3.2}, and then observe that, for all $\aiminabs{\alpha}=m$,
\begin{equation}\label{eq06-1}
\begin{cases}
\partial^\alpha\hat v = \widetilde{\partial^\alpha v} + \nu_1^\alpha, \\
T\widetilde{\partial^\alpha v} =\widetilde{T\partial^\alpha v} + \nu_2^\alpha,
\end{cases}
\end{equation}
where $\nu_1^\alpha$\,\footnote[1]{In fact, $\nu_1^\alpha$ is the sum of a function with support in $\mathcal{O}_1\setminus\overline\Omega$ and a measure supported by $\mathcal{O}_1\cap\partial\Omega$, but this additional information is not useful to us.} is a measure supported by $\mathcal{O}_1\setminus\Omega$, and $\nu_2^\alpha$ is a measure supported by $\mathcal{O}_1\cap\partial\Omega$.

The two identities in \eqref{eq06-1} together show that
\begin{equation}\label{eq06-7}
T\partial^\alpha\hat v = \widetilde{T\partial^\alpha v} + \mu^\alpha,\, \forall\, \aiminabs{\alpha}=m,
\end{equation}
where the $\mu^\alpha$ are measures supported by $\mathcal{O}_1\setminus\Omega$. Let $\rho$ be the same mollifier as before, but now $\rho$ is compactly supported in $\{x<0, y<0\}$; then mollifying \eqref{eq06-7} with this $\rho$ gives
\begin{equation}\label{eq07}
\rho_\epsilon*(T\partial^\alpha\hat v) =\rho_\epsilon*\widetilde{T\partial^\alpha v} + \rho_\epsilon*\mu^\alpha,\, \forall\, \aiminabs{\alpha}=m.
\end{equation}
By the choice of $\rho$, $\rho_\epsilon*\mu^\alpha$ is supported outside of $\Omega$. Hence, restricting \eqref{eq07} to $\Omega$ implies that:
\begin{equation}\label{eq07extra1}
(\rho_\epsilon*(T\partial^\alpha\hat v))\big|_{\Omega} =\rho_\epsilon*(T\partial^\alpha v) \rightarrow T\partial^\alpha v, \text{ as }\epsilon\rightarrow 0,
\end{equation}
in $L^2(\Omega)$. Applying Lemma \ref{lem3.2extra} with $\mathcal{U}=\mathbb{R}^2, a=\lambda$ and $u=\partial^\alpha\hat v$, we obtain that as $\epsilon\rightarrow 0$,
\begin{equation}
T(\rho_\epsilon*\partial^\alpha\hat v) - \rho_\epsilon*T\partial^\alpha\hat v \rightarrow 0,\,\forall\, \aiminabs{\alpha}=m,
\end{equation}
in $L^2(\Omega)$, 
which, combined with \eqref{eq07extra1}, implies that
\begin{equation}\label{eq07extra2}
T(\rho_\epsilon*\partial^\alpha\hat v)\big|_{\Omega} \rightarrow T\partial^\alpha v, \text{ as }\epsilon\rightarrow 0,
\end{equation}
in $L^2(\Omega)$. If we set $\hat v_\epsilon=\rho_\epsilon*\hat v$, then as $\epsilon\rightarrow 0$, $\hat v_\epsilon\rightarrow\hat v$ in $H^m(\mathbb{R}^2)$, and
\begin{equation}\label{eq08}
\begin{cases}
\hat v_\epsilon\big|_\Omega \rightarrow v, \text{ in } H^m(\Omega);\\
T\partial^\alpha(\hat v_\epsilon\big|_\Omega)\rightarrow T\partial^\alpha v, \text{ in } L^2(\Omega),\, \forall\, \aiminabs{\alpha}=m,
\end{cases}
\end{equation}
which shows that $\hat v_\epsilon\big|_\Omega$ converges to $v$ in $\mathcal{X}_1^m(\Omega)$.
\end{proof}

Since we have to prove a density theorem involving the boundary values of the functions on $\partial\Omega$, we first need to show that the desired traces at the boundary make sense. We thus prove the following trace result.
\begin{prop}[A trace theorem]\label{prop3.3}
We assume that $\lambda=\lambda(x,y)$ satisfies the positive $0$-condition. If $\theta\in\mathcal{X}_1^0(\Omega)$, then the traces of $\theta$ are defined on all of $\partial\Omega$, i.e. the traces of $\theta$ are defined at $x=0,L_1$, and $y=0,L_2$, and they belong to the respective spaces $H_y^{-1}(0,L_2)$ and $H_x^{-1}(0,L_1)$. Furthermore the trace operators are linear continuous in the corresponding spaces, e.g., $\theta\in\mathcal{X}_1^0(\Omega)\rightarrow \theta|_{x=0}$ is continuous from $\mathcal{X}_1^0(\Omega)$ into $H_y^{-1}(0,L_2)$.
\end{prop}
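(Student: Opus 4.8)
\emph{Proof proposal.} The plan is to first prove, for $\theta\in\mathcal{C}^\infty(\overline\Omega)$ — where all traces exist classically — the a priori estimate that underlies the trace statement, and then to extend the trace operators to all of $\mathcal{X}_1^0(\Omega)$ by means of the density result of Proposition~\ref{prop3.2} applied with $m=0$ (note that $\mathcal{C}^\infty(\overline\Omega)\subset\mathcal{X}_1^0(\Omega)$, so that statement says precisely that $\mathcal{C}^\infty(\overline\Omega)$ is dense in $\mathcal{X}_1^0(\Omega)$). The structural reason the traces exist on \emph{every} side is that the vector field $(\lambda,1)$ attached to $T$ is non-characteristic with respect to each side of the rectangle: its $x$-component $\lambda$ is $\geq c_0>0$, so it is transverse to $\{x=0\}$ and $\{x=L_1\}$, and its $y$-component equals $1\neq 0$, so it is transverse to $\{y=0\}$ and $\{y=L_2\}$. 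This transversality is exactly what lets us recover a single boundary integral from an integration-by-parts identity and estimate it.

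Consider the side $x=0$ and fix $\theta\in\mathcal{C}^\infty(\overline\Omega)$. Choose once and for all a cut-off $\chi\in\mathcal{C}^\infty([0,L_1])$ with $\chi\equiv1$ near $x=0$ and $\chi\equiv0$ near $x=L_1$, and for arbitrary $\psi\in\mathcal{C}^\infty_c(0,L_2)$ put $w(x,y)=\chi(x)\psi(y)$. Integrating $\int_\Omega(T\theta)\,w=\int_\Omega(\lambda\theta_x+\theta_y)\,w$ by parts, the boundary contribution at $x=L_1$ vanishes since $w(L_1,\cdot)=0$ and those at $y=0,L_2$ vanish since $\psi$ has compact support; what remains is
\begin{equation}\nonumber
\int_0^{L_2}\lambda(0,y)\,\theta(0,y)\,\psi(y)\,dy
=-\int_\Omega(T\theta)\,w-\int_\Omega\theta\,(\lambda w)_x-\int_\Omega\theta\,w_y .
\end{equation}
Using only $c_0\leq\lambda\leq c_1$, $\nabla\lambda\in L^\infty(\Omega)$, and the fixed $\chi$, one bounds $\|w\|_{L^2(\Omega)}$, $\|w_y\|_{L^2(\Omega)}$, and $\|(\lambda w)_x\|_{L^2(\Omega)}=\|\lambda_x\chi\psi+\lambda\chi'\psi\|_{L^2(\Omega)}$ each by $C\,\|\psi\|_{H^1_0(0,L_2)}$, with $C$ independent of $\theta$ and $\psi$; hence the right-hand side is $\leq C\big(\|\theta\|_{L^2(\Omega)}+\|T\theta\|_{L^2(\Omega)}\big)\|\psi\|_{H^1_0(0,L_2)}$. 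Since $\mathcal{C}^\infty_c(0,L_2)$ is dense in $H^1_0(0,L_2)$, this shows $\lambda(0,\cdot)\theta(0,\cdot)\in H^{-1}_y(0,L_2)$ with norm controlled by $\|\theta\|_{L^2(\Omega)}+\|T\theta\|_{L^2(\Omega)}$. Finally, $\Omega$ being convex and $\nabla\lambda\in L^\infty(\Omega)$, the function $\lambda$ is Lipschitz on $\overline\Omega$, so $\lambda(0,\cdot)$ and (using $\lambda\geq c_0$) $1/\lambda(0,\cdot)$ are Lipschitz on $[0,L_2]$; multiplication by $1/\lambda(0,\cdot)$ maps $H^1_0(0,L_2)$ into itself, hence by duality maps $H^{-1}_y(0,L_2)$ into itself, and therefore $\theta(0,\cdot)=\big(1/\lambda(0,\cdot)\big)\big(\lambda(0,\cdot)\theta(0,\cdot)\big)\in H^{-1}_y(0,L_2)$ with the same type of bound.

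The side $x=L_1$ is identical, with $\chi\equiv1$ near $x=L_1$ and $\chi\equiv0$ near $x=0$. For the sides $y=0$ and $y=L_2$ one takes instead $w(x,y)=\psi(x)\chi(y)$ with $\psi\in\mathcal{C}^\infty_c(0,L_1)$ and $\chi\in\mathcal{C}^\infty([0,L_2])$ equal to $1$ near the target side and to $0$ near the opposite one; now the two $x$-boundary terms vanish because $\psi$ has compact support, the integration by parts in $y$ produces $\int_0^{L_1}\theta(x,0)\psi(x)\,dx$ directly with no factor of $\lambda$, and the same estimates give $\theta(\cdot,0),\theta(\cdot,L_2)\in H^{-1}_x(0,L_1)$ with norms $\leq C(\|\theta\|_{L^2(\Omega)}+\|T\theta\|_{L^2(\Omega)})$.

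In each of the four cases we have thus produced, on the dense subspace $\mathcal{C}^\infty(\overline\Omega)$ of $\mathcal{X}_1^0(\Omega)$, a linear map that is continuous for the $\mathcal{X}_1^0(\Omega)$-norm into the relevant \emph{complete} space $H^{-1}_y(0,L_2)$ or $H^{-1}_x(0,L_1)$; it therefore extends uniquely to a continuous linear trace operator on all of $\mathcal{X}_1^0(\Omega)$, which is the assertion. I expect the only point requiring real care to be the passage from the trace of $\lambda\theta$ to the trace of $\theta$ on $\{x=0\}$ and $\{x=L_1\}$, i.e.\ the division by $\lambda$, which is handled by the lower bound on $\lambda$ together with its Lipschitz regularity on $\overline\Omega$; the remainder is a routine integration-by-parts-plus-density argument.
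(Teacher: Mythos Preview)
Your proof is correct, but it follows a different route from the paper's. The paper argues directly at the level of vector-valued Sobolev spaces: from $\theta\in L^2(\Omega)$ one has $\theta_y\in L^2_x(0,L_1;H^{-1}_y(0,L_2))$, and since $T\theta=\lambda\theta_x+\theta_y\in L^2(\Omega)$ it follows that $\lambda\theta_x$, and then (dividing by $\lambda$) $\theta_x$, also lies in $L^2_x(0,L_1;H^{-1}_y(0,L_2))$; combined with $\theta\in L^2_x(0,L_1;L^2_y(0,L_2))$ this gives $\theta\in\mathcal C_x([0,L_1];H^{-1}_y(0,L_2))$, and the traces at $x=0,L_1$ follow. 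Your argument instead establishes an a~priori bound on smooth functions via an integration-by-parts identity and then extends by the density result of Proposition~\ref{prop3.2}. Both approaches ultimately hinge on the same point---multiplication by $1/\lambda$ being bounded on the relevant space, which uses $\lambda\geq c_0$ and $\nabla\lambda\in L^\infty$---but the paper's route is shorter and does not invoke Proposition~\ref{prop3.2}, while yours is more explicit about the duality pairing defining the trace and makes the role of the transversality of the vector field $(\lambda,1)$ to each side of $\partial\Omega$ quite transparent.
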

\begin{proof}
Since $\theta\in L^2(\Omega)$,
we see that $\theta_y$ belongs to $L_x^2(0,L_1; H_y^{-1}(0,L_2))$, which implies that $\lambda \theta_x$ belongs to $L_x^2(0,L_1; H_y^{-1}(0,L_2))$ by observing that $T\theta=\lambda \theta_x+\theta_y\in L^2(\Omega)$. Using assumptions \eqref{asp3.1} and \eqref{asp3.2} when $m=0$ for $\lambda$, we obtain $\theta_x\in L_x^2(0,L_1; H_y^{-1}(0,L_2))$, which, in combination with $\theta\in L_x^2(0,L_1; L_y^2(0,L_2))$, shows that $\theta\in\mathcal{C}_x([0,L_1];H_y^{-1}(0,L_2))$. Hence, the traces of $\theta$ are well-defined at $x=0$ and $L_1$, and belong to $H_y^{-1}(0,L_2)$. The continuity of the corresponding mappings is easy. The proof for the traces at $y=0$ and $L_2$ is similar.
\end{proof}

We are now going to introduce density theorems involving the boundary values of the functions on $\partial\Omega$.
Here and throughout this article we denote by $\Gamma_1,\Gamma_2,\Gamma_3,\Gamma_4$ the boundaries $x=0,x=L_1,y=0,y=L_2$ respectively, and define ${\boldsymbol\Gamma}$ to be $\Gamma_1\cup\Gamma_3=\{x=0\}\cup\{y=0\}$.
We also write $\theta|_{\boldsymbol\Gamma}=0$ as a short notation for $\theta|_{x=0}=\theta|_{y=0}=0$, and we introduce the function spaces:
\begin{subequations}
\begin{align}
\mathcal{H}_{\boldsymbol\Gamma}^m(\Omega)&=\{\theta\in H^m(\Omega)\,:\,\partial^{\alpha}\theta\big|_{{\boldsymbol\Gamma}}=0,\, \forall\,\aiminabs{\alpha}\leq m-1 \},\label{eq3.e3a}\\
\mathcal{X}_{\boldsymbol\Gamma}^m(\Omega)&=\{ \theta\in H^m(\Omega)\,:\, T\theta=\lambda(x,y) \theta_x+\theta_y\in H^m(\Omega),\, \partial^{\alpha}\theta\big|_{{\boldsymbol\Gamma}}=0,\, \forall\,\aiminabs{\alpha}\leq m \}, \label{eq3.e3b}\\
\mathcal{V}_{\boldsymbol\Gamma}(\Omega) &= \{ \theta\in \mathcal{C}^\infty(\overline{\Omega})\,:\, \theta \text{ vanishes in a neighborhood of } {\boldsymbol\Gamma}\}.\label{eq3.e3c}
\end{align}
\end{subequations}
Note that when $m=0$, the space $\mathcal{H}_{\boldsymbol\Gamma}^0(\Omega)$ is the space $L^2(\Omega)$.

We first have the following characterizations for the space $\mathcal{X}_{\boldsymbol\Gamma}^m(\Omega)$.
\begin{prop}\label{prop3.4}
For all integer $m\geq 0$, we have
\begin{equation}\nonumber
\mathcal{X}_{\boldsymbol\Gamma}^m(\Omega) = \{ \theta\in \mathcal{H}_{\boldsymbol\Gamma}^m(\Omega)\,:\,T\theta\in \mathcal{H}_{\boldsymbol\Gamma}^m(\Omega),\,\partial^{\alpha}\theta\big|_{{\boldsymbol\Gamma}}=0,\, \forall\,\aiminabs{\alpha}=m\};
\end{equation}
for all integer $m\geq 1$, we have
\begin{equation}\label{eq3.1}
\mathcal{X}_{\boldsymbol\Gamma}^m(\Omega) = \{ \theta\in \mathcal{H}_{\boldsymbol\Gamma}^m(\Omega)\,:\,T\theta\in \mathcal{H}_{\boldsymbol\Gamma}^m(\Omega)\}.
\end{equation}
\end{prop}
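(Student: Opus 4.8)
The plan is to prove two set equalities. Write temporarily
\[
\mathcal{Y}^m = \{\theta\in\mathcal{H}_{\boldsymbol\Gamma}^m(\Omega):T\theta\in\mathcal{H}_{\boldsymbol\Gamma}^m(\Omega),\ \partial^\alpha\theta|_{\boldsymbol\Gamma}=0\ \forall|\alpha|=m\}
\]
for the first candidate and $\mathcal{Z}^m$ for the second (defined only for $m\ge1$). The inclusions $\mathcal{X}_{\boldsymbol\Gamma}^m\subseteq\mathcal{Y}^m$ and (for $m\ge1$) $\mathcal{X}_{\boldsymbol\Gamma}^m\subseteq\mathcal{Z}^m$ are essentially by definition: if $\theta\in\mathcal{X}_{\boldsymbol\Gamma}^m$, then $\theta\in H^m$ with $\partial^\alpha\theta|_{\boldsymbol\Gamma}=0$ for $|\alpha|\le m$, so in particular $\theta\in\mathcal{H}_{\boldsymbol\Gamma}^m$ and the top-order traces vanish; the only thing requiring an argument is that $T\theta\in\mathcal{H}_{\boldsymbol\Gamma}^m$, i.e. that $\partial^\beta(T\theta)|_{\boldsymbol\Gamma}=0$ for all $|\beta|\le m-1$. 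This I would extract from the product/commutator identity $T(\partial^\beta\theta)=\partial^\beta(T\theta)-[\partial^\beta,\lambda]\theta_x$ (as in \eqref{eq3.1.1}--\eqref{eq3.1.2}): each term on the left and in the commutator is a sum of products $(\partial^\gamma\lambda)(\partial^\delta\theta)$ with $|\delta|\le m$, and since all traces $\partial^\delta\theta|_{\boldsymbol\Gamma}=0$ vanish for $|\delta|\le m$, every such product restricted to $\boldsymbol\Gamma$ is $0$ (the trace of a product being the product of traces, one of which is zero — here one needs the regularity of $\lambda$ from the positive $m$-condition to make sense of these traces, which is exactly what \eqref{asp3.2} provides via Lemma \ref{lemb.1}). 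Hence $\partial^\beta(T\theta)|_{\boldsymbol\Gamma}=0$, giving $T\theta\in\mathcal{H}_{\boldsymbol\Gamma}^m$.

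For the reverse inclusion $\mathcal{Y}^m\subseteq\mathcal{X}_{\boldsymbol\Gamma}^m$: suppose $\theta\in\mathcal{H}_{\boldsymbol\Gamma}^m$ with $T\theta\in\mathcal{H}_{\boldsymbol\Gamma}^m$ and $\partial^\alpha\theta|_{\boldsymbol\Gamma}=0$ for $|\alpha|=m$. Since $T\theta\in H^m(\Omega)$ we already know $\theta\in\mathcal{X}_1^m(\Omega)$, so the only thing to check for membership in $\mathcal{X}_{\boldsymbol\Gamma}^m$ is the trace condition $\partial^\alpha\theta|_{\boldsymbol\Gamma}=0$ for $\emph{all}$ $|\alpha|\le m$. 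But the definition \eqref{eq3.e3a} of $\mathcal{H}_{\boldsymbol\Gamma}^m$ already forces $\partial^\alpha\theta|_{\boldsymbol\Gamma}=0$ for $|\alpha|\le m-1$, and the extra hypothesis supplies $|\alpha|=m$; together these are exactly $|\alpha|\le m$. So $\mathcal{Y}^m=\mathcal{X}_{\boldsymbol\Gamma}^m$, and similarly the membership test for $\mathcal{X}_1^m$ is satisfied since $T\theta\in H^m$. This direction is therefore just unwinding definitions.

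The substantive content is the equality $\mathcal{Z}^m=\mathcal{X}_{\boldsymbol\Gamma}^m$ for $m\ge1$, i.e. that when $\theta\in\mathcal{H}_{\boldsymbol\Gamma}^m$ and $T\theta\in\mathcal{H}_{\boldsymbol\Gamma}^m$, the top-order traces $\partial^\alpha\theta|_{\boldsymbol\Gamma}$ ($|\alpha|=m$) automatically vanish — they need not be assumed. This is where I expect the main obstacle. The idea is to use the characteristic nature of $T$: on the boundary portion $\Gamma_3=\{y=0\}$, say, the operator $T=\lambda\partial_x+\partial_y$ has a nonzero normal component ($\partial_y$), so one can solve for the top-order $y$-derivative in terms of a tangential derivative plus $T$ applied to a lower-order derivative. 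Concretely, writing $\alpha=(\alpha_1,\alpha_2)$ with $\alpha_2\ge1$, from $T(\partial_x^{\alpha_1}\partial_y^{\alpha_2-1}\theta)=\lambda\,\partial_x^{\alpha_1+1}\partial_y^{\alpha_2-1}\theta+\partial_x^{\alpha_1}\partial_y^{\alpha_2}\theta$ one gets $\partial^\alpha\theta$ as a combination of $T(\partial^{\alpha'}\theta)$ with $|\alpha'|=m-1$ — whose trace on $\Gamma_3$ vanishes because $T\theta\in\mathcal{H}_{\boldsymbol\Gamma}^m$ means $\partial^{\alpha'}(T\theta)|_{\boldsymbol\Gamma}=0$, hence (via the commutator identity again, all lower-order $\theta$-traces being zero) $T(\partial^{\alpha'}\theta)|_{\boldsymbol\Gamma}=0$ — plus a term $\lambda\,\partial_x^{\alpha_1+1}\partial_y^{\alpha_2-1}\theta$ whose trace on $\Gamma_3$ is $\lambda|_{\Gamma_3}\,\partial_x(\partial_x^{\alpha_1}\partial_y^{\alpha_2-1}\theta|_{\Gamma_3})$, a tangential derivative of a trace that is already known to vanish (since $|(\alpha_1,\alpha_2-1)|=m-1$). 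Iterating this to remove all $y$-derivatives handles $\Gamma_3$; the symmetric argument — now $T$ has nonzero tangential part $\lambda\partial_x$ and one peels off $x$-derivatives, using that $\lambda\ge c_0>0$ so $\lambda$ is invertible — handles $\Gamma_1=\{x=0\}$. The care needed is purely bookkeeping: justifying that all the traces invoked live in the right (negative-index) Sobolev spaces and that the commutator terms genuinely vanish on $\boldsymbol\Gamma$, which again rests on the positive $m$-condition and Lemma \ref{lemb.1}. I would organize it as a downward induction on the number of normal derivatives in $\alpha$.
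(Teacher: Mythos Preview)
Your proposal is correct and follows the same underlying idea as the paper's proof: both rely on expanding derivatives of $T\theta$ and using that lower-order traces of $\theta$ vanish on $\boldsymbol\Gamma$, together with $\lambda\geq c_0>0$, to force the top-order traces to vanish.

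The paper's presentation is more streamlined than your proposed induction. Rather than peeling off normal derivatives one at a time, the paper observes at the outset that on each piece of $\boldsymbol\Gamma$ only the \emph{pure normal} $m$-th derivative requires work: any $\partial^\alpha\theta$ with $|\alpha|=m$ that contains at least one tangential derivative is a tangential derivative of some $\partial^\beta\theta$ with $|\beta|=m-1$, and the latter already vanishes on $\boldsymbol\Gamma$ since $\theta\in\mathcal{H}_{\boldsymbol\Gamma}^m$. This reduces the problem to checking $\partial_x^m\theta|_{\Gamma_1}=0$ and $\partial_y^m\theta|_{\Gamma_3}=0$. A single Leibniz expansion of $\partial_x^{m-1}(T\theta)$ on $\Gamma_1$ then gives $0=\lambda\,\partial_x^m\theta$ (all other terms involving $\partial^\gamma\theta$ with $|\gamma|\le m-1$ or a tangential factor), and similarly on $\Gamma_3$. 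So your ``downward induction on the number of normal derivatives'' collapses to one step once you make this reduction; the commutator machinery you invoke for the mixed-derivative cases is not needed.
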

\begin{proof}It is clear that the first statement holds for $m=0$, we thus only need to show the second statement. By definition of the spaces $\mathcal{H}_{\boldsymbol\Gamma}^m(\Omega)$ and $\mathcal{X}_{\boldsymbol\Gamma}^m(\Omega)$, we see that
$$\mathcal{X}_{\boldsymbol\Gamma}^m(\Omega) \subset \{ \theta\in \mathcal{H}_{\boldsymbol\Gamma}^m(\Omega)\,:\,T\theta\in \mathcal{H}_{\boldsymbol\Gamma}^m(\Omega)\}.$$
In order to prove the converse inclusion, let $\theta$ belongs to the right-hand side of \eqref{eq3.1}, then it is clear that we only need to show that $\theta$ satisfies the boundary conditions $\partial^{\alpha}\theta\big|_{{\boldsymbol\Gamma}}=0$ for all $\aiminabs{\alpha}= m$. Furthermore, we only need to show
that \begin{equation}
\partial_x^m\theta|_{\Gamma_1} = 0,\text{ and }\,\partial_y^m\theta|_{\Gamma_2} = 0,
\end{equation}
since the other boundary conditions involve the derivatives with respect to tangential directions. Since $T\theta\in \mathcal{H}_{\boldsymbol\Gamma}^m(\Omega)$, on $\Gamma_1=\{x=0\}$, we have
\begin{equation}
0=\partial_x^{m-1}(\lambda \theta_x + \theta_y)=\sum_{k=0}^{m-1}\partial_x^{m-k-1}(\lambda(x,y))\partial_x^k\theta_x + \partial_x^{m-1}\theta_y + \lambda\partial_x^m\theta,
\end{equation}
which, together with $\theta\in \mathcal{H}_{\boldsymbol\Gamma}^m(\Omega)$, implies that
$\lambda\partial_x^m\theta=0$. We thus have $\partial_x^m\theta=0$ on $\Gamma_1$. Similarly, we can also show that $\partial_y^m\theta=0$ on $\Gamma_2$. We thus completed the proof.

\end{proof}
As an immediate consequence of Proposition \ref{prop3.1}, we also find the following equivalent characterizations of the space $\mathcal{X}_{\boldsymbol\Gamma}^m(\Omega)$.
\begin{prop}\label{prop3.5}
\begin{equation}\nonumber
\begin{split}
\mathcal{X}_{\boldsymbol\Gamma}^m(\Omega)&=\{ \theta\in H^m(\Omega)\,:\, T\partial^\alpha\theta\in L^2(\Omega),\,\forall\, \aiminabs{\alpha}=m;\, \partial^{\alpha}\theta\big|_{{\boldsymbol\Gamma}}=0,\, \forall\,\aiminabs{\alpha}\leq m \} \\
&=\{ \theta\in \mathcal{H}_{\boldsymbol\Gamma}^m(\Omega)\,:\,T\partial^\alpha\theta\in L^2(\Omega),\,\partial^{\alpha}\theta\big|_{{\boldsymbol\Gamma}}=0,\, \forall\,\aiminabs{\alpha}=m\}.
\end{split}
\end{equation}
\end{prop}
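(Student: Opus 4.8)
The plan is to read off both identities directly from the definitions \eqref{eq3.e3a}, \eqref{eq3.e3b} together with Proposition \ref{prop3.1}, which is the only substantial input (this is why the statement is announced as an immediate consequence of that proposition). By definition, $\theta\in\mathcal{X}_{\boldsymbol\Gamma}^m(\Omega)$ means precisely that $\theta\in H^m(\Omega)$, $T\theta\in H^m(\Omega)$, and $\partial^{\alpha}\theta\big|_{\boldsymbol\Gamma}=0$ for all $\aiminabs{\alpha}\leq m$; equivalently, that $\theta\in\mathcal{X}_1^m(\Omega)$ satisfies these boundary conditions. Proposition \ref{prop3.1} tells us that, for $\theta\in H^m(\Omega)$, the condition $T\theta\in H^m(\Omega)$ is equivalent to $T\partial^\alpha\theta\in L^2(\Omega)$ for every $\aiminabs{\alpha}=m$. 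Substituting this equivalence into the above description of $\mathcal{X}_{\boldsymbol\Gamma}^m(\Omega)$ yields the first displayed characterization.

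For the second identity I would simply split the family of boundary conditions $\{\,\partial^{\alpha}\theta\big|_{\boldsymbol\Gamma}=0:\aiminabs{\alpha}\leq m\,\}$ into the part with $\aiminabs{\alpha}\leq m-1$ and the part with $\aiminabs{\alpha}=m$. By the definition \eqref{eq3.e3a}, the first part together with $\theta\in H^m(\Omega)$ is exactly the requirement $\theta\in\mathcal{H}_{\boldsymbol\Gamma}^m(\Omega)$; keeping the conditions $T\partial^\alpha\theta\in L^2(\Omega)$ and $\partial^{\alpha}\theta\big|_{\boldsymbol\Gamma}=0$ for $\aiminabs{\alpha}=m$ unchanged then gives the second characterization.

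The only point deserving a remark --- and the closest thing to an obstacle --- is that the order-$m$ traces $\partial^\alpha\theta\big|_{\boldsymbol\Gamma}$ with $\aiminabs{\alpha}=m$ appearing in these characterizations are well defined. This is handled by the trace theorem: once $T\partial^\alpha\theta\in L^2(\Omega)$ we have $\partial^\alpha\theta\in\mathcal{X}_1^0(\Omega)$ (recall that the positive $m$-condition implies the positive $0$-condition), so Proposition \ref{prop3.3} applies to $\partial^\alpha\theta$ and supplies its trace on $\partial\Omega$ in the relevant $H^{-1}$-type spaces, in particular making sense of the vanishing condition on ${\boldsymbol\Gamma}$. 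With this understood the argument is pure bookkeeping and no further estimates are needed.
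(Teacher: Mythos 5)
Your proof is correct and matches the paper's intent exactly: the paper gives no proof of Proposition \ref{prop3.5}, stating only that it is an immediate consequence of Proposition \ref{prop3.1}, and your argument simply spells out that deduction (replace $T\theta\in H^m(\Omega)$ by the equivalent condition $T\partial^\alpha\theta\in L^2(\Omega)$ for all $\aiminabs{\alpha}=m$ using Proposition \ref{prop3.1}, then regroup the boundary conditions using the definition of $\mathcal{H}_{\boldsymbol\Gamma}^m(\Omega)$). Your additional remark invoking Proposition \ref{prop3.3} to justify that the order-$m$ traces make sense is a welcome clarification that the paper leaves implicit.
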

Recall that ${\boldsymbol\Gamma}=\Gamma_1\cup\Gamma_3=\{x=0\}\cup\{y=0\}$, and then we state the density theorems:
\begin{thm}\label{thm3.1}
$\mathcal{V}_{\boldsymbol\Gamma}(\Omega)\cap\mathcal{H}_{\boldsymbol\Gamma}^m(\Omega)$ is dense in $\mathcal{H}_{\boldsymbol\Gamma}^m(\Omega)$.
\end{thm}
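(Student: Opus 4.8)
The plan is to follow the same localization-and-mollification scheme used in the proof of Proposition \ref{prop3.2}, but now keeping track of the vanishing conditions on ${\boldsymbol\Gamma}=\Gamma_1\cup\Gamma_3$. Fix $\theta\in\mathcal{H}_{\boldsymbol\Gamma}^m(\Omega)$. Using the same covering $\mathcal{O}_0,\mathcal{O}_1,\dots,\mathcal{O}_N$ of $\Omega$ as before (an interior patch, corner balls, and edge balls) together with a subordinate partition of unity $1=\sum_{i=0}^N\psi_i$, write $\theta=\sum_i\psi_i\theta$. Each $\psi_i\theta$ still lies in $\mathcal{H}_{\boldsymbol\Gamma}^m(\Omega)$ since $\mathcal{H}_{\boldsymbol\Gamma}^m(\Omega)$ is of local type (multiplication by $\psi\in\mathcal{C}^\infty(\overline\Omega)$ preserves both $H^m(\Omega)$ and the vanishing of $\partial^\alpha(\psi\theta)$ on ${\boldsymbol\Gamma}$ for $|\alpha|\le m-1$, via the Leibniz rule). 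So it suffices to approximate each $v=\psi_i\theta$, with prescribed support in $\mathcal{O}_i$, by functions in $\mathcal{V}_{\boldsymbol\Gamma}(\Omega)$.

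First I would dispatch the easy cases. If $\mathcal{O}_i=\mathcal{O}_0$ is interior, or if $\mathcal{O}_i$ is an edge ball centered on $\Gamma_2=\{x=L_1\}$ or $\Gamma_4=\{y=L_2\}$, or a corner ball at $(L_1,0)$, $(0,L_2)$, $(L_1,L_2)$ — in short, whenever $\overline{\mathcal{O}_i}\cap{\boldsymbol\Gamma}=\emptyset$ — then $v$ already vanishes near ${\boldsymbol\Gamma}$, and a standard interior mollification $\rho_\epsilon * \widetilde v$ (translated slightly if needed to stay inside $\Omega$, as in part $ii)$ of Proposition \ref{prop3.2}) gives smooth approximants still vanishing near ${\boldsymbol\Gamma}$ and converging in $H^m(\Omega)$. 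The substance of the theorem is in the patches that actually touch ${\boldsymbol\Gamma}$: the edge balls centered on $\Gamma_1$ or $\Gamma_3$, and the corner ball at the origin. Here one must produce approximants that vanish in a \emph{neighborhood} of ${\boldsymbol\Gamma}$, not merely on it. The standard device: extend $v$ by reflection/Lipschitz extension to $H^m(\mathbb{R}^2)$ (Lemma \ref{lem3.2}), then translate the mollified function \emph{away} from ${\boldsymbol\Gamma}$ — e.g., near $\Gamma_1=\{x=0\}$ set $v_\epsilon(x,y)=(\rho_\epsilon * \hat v)(x-\epsilon,y)$, which is smooth, supported in a region where $x\geq \epsilon - c\epsilon >0$ suitably, hence vanishing near $\Gamma_1$; at the corner translate in both variables, $v_\epsilon(x,y)=(\rho_\epsilon*\hat v)(x-\epsilon,y-\epsilon)$. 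One must check that $v_\epsilon|_\Omega\to v$ in $H^m(\Omega)$; this is the classical fact that a translate of a mollification of an $H^m$ function converges in $H^m$, provided the translation amount is $O(\epsilon)$, and it is exactly where the vanishing-on-${\boldsymbol\Gamma}$ hypothesis on $v$ is used: the ``shifted-in'' profile agrees with the reflectively extended $v$ up to an $H^m$ error of size $o(1)$ precisely because $\partial^\alpha v$ vanishes on ${\boldsymbol\Gamma}$ for $|\alpha|\le m-1$, controlling the boundary-layer discrepancy.

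The main obstacle I expect is this last convergence check at the corner patch: one is translating diagonally away from the two sides $\{x=0\}$ and $\{y=0\}$ simultaneously, and must argue that $\|v_\epsilon|_\Omega-v\|_{H^m(\Omega)}\to 0$. The clean way is to write $v_\epsilon|_\Omega - v = (\tau_{\epsilon}(\rho_\epsilon*\hat v) - \hat v)|_\Omega$ where $\tau_\epsilon$ is the diagonal shift, and split into $\tau_\epsilon(\rho_\epsilon*\hat v) - \rho_\epsilon*\hat v$ (small in $H^m(\mathbb{R}^2)$ by continuity of translation on $H^m$, since $\|\partial^\alpha \hat v(\cdot - (\epsilon,\epsilon)) - \partial^\alpha\hat v\|_{L^2}\to 0$ for each $|\alpha|\le m$) plus $\rho_\epsilon*\hat v - \hat v$ (small in $H^m(\mathbb{R}^2)$ by standard mollification). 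Restricting to $\Omega$ only decreases norms, so $v_\epsilon|_\Omega\to v$ in $H^m(\Omega)$; since each $v_\epsilon|_\Omega$ lies in $\mathcal{V}_{\boldsymbol\Gamma}(\Omega)\cap\mathcal{H}_{\boldsymbol\Gamma}^m(\Omega)$, summing over $i$ completes the proof. I would remark that, unlike Proposition \ref{prop3.2}, no Friedrichs-type lemma is needed here because the statement only concerns the $H^m(\Omega)$ topology and not the operator $T$; the analogous density in $\mathcal{X}_{\boldsymbol\Gamma}^m(\Omega)$, which \emph{will} require controlling $T$ under these shifts, is presumably the next theorem.
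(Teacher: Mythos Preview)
There is a genuine gap. You extend $v=\psi_i\theta$ via the operator $P$ of Lemma~\ref{lem3.2} to $\hat v=Pv\in H^m(\mathbb{R}^2)$ and then claim that the translated mollification $v_\epsilon(x,y)=(\rho_\epsilon*\hat v)(x-\epsilon,y-\epsilon)$ is ``supported in a region where $x\ge\epsilon-c\epsilon>0$''. This is false: $Pv$ is a genuine $H^m$-extension across $\partial\Omega$ and in general does \emph{not} vanish in $\{x<0\}\cup\{y<0\}$, so $\rho_\epsilon*\hat v$ carries mass there and the shift by $\epsilon$ does not make $v_\epsilon$ vanish near $\boldsymbol\Gamma$. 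You also misplace the role of the hypothesis: the convergence $\tau_\epsilon(\rho_\epsilon*\hat v)\to\hat v$ in $H^m(\mathbb{R}^2)$ is pure translation-continuity plus mollification and uses the vanishing on $\boldsymbol\Gamma$ nowhere. (A smaller slip: the corner balls at $(L_1,0)$ and $(0,L_2)$ \emph{do} meet $\boldsymbol\Gamma=\Gamma_1\cup\Gamma_3$, so they are not among your ``easy cases''.)

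What the vanishing hypothesis $\partial^\alpha\theta|_{\boldsymbol\Gamma}=0$, $|\alpha|\le m-1$, actually buys is that the \emph{extension by zero} $\tilde\theta$ behaves well across $\boldsymbol\Gamma$: for every $|\alpha|\le m$ one has $\partial^\alpha\tilde\theta=\widetilde{\partial^\alpha\theta}+\nu^\alpha$ with $\nu^\alpha$ supported only on the far sides $\Gamma_2\cup\Gamma_4$. The paper exploits this directly, with no partition of unity: convolve $\tilde\theta$ with a mollifier $\rho$ compactly supported in the open cone $\{0<\tfrac12 x<y<2x\}$ and restrict to $\Omega$. The cone support simultaneously (i) pushes the measures $\nu^\alpha$ on $\Gamma_2\cup\Gamma_4$ out of $\Omega$, so that $\partial^\alpha(\rho_\epsilon*\tilde\theta)|_\Omega=(\rho_\epsilon*\widetilde{\partial^\alpha\theta})|_\Omega\to\partial^\alpha\theta$ in $L^2(\Omega)$, and (ii) forces $\rho_\epsilon*\tilde\theta$ to vanish in a full neighborhood of $\boldsymbol\Gamma$. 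If you prefer to keep your localization scheme, the fix is simply to replace $\hat v=Pv$ by the zero extension $\tilde v$ on every patch touching $\boldsymbol\Gamma$; then your translated mollification really does have the support you asserted, and the rest of your argument goes through.
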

\begin{thm}\label{thm3.2}
Suppose that $\lambda=\lambda(x,y)$ satisfies the positive $m$-condition. Then we have
\begin{equation}\nonumber
\mathcal{V}_{\boldsymbol\Gamma}(\Omega)\cap\mathcal{X}_{\boldsymbol\Gamma}^m(\Omega) \text{ is dense in } \mathcal{X}_{\boldsymbol\Gamma}^m(\Omega).
\end{equation}
\end{thm}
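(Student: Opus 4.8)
The plan is to follow the pattern of Proposition~\ref{prop3.2}: fix a covering $\mathcal O_0\Subset\Omega$, $\mathcal O_1,\dots,\mathcal O_N$ (balls at corners of $\Omega$ or centred on its sides, each avoiding the other sides) and a subordinate partition of unity $1=\sum_{i=0}^N\psi_i$, and reduce — using that $\mathcal X^m_{\boldsymbol\Gamma}(\Omega)$ is of local type (which follows from \eqref{eq3.e1}, the Leibniz rule, and, for $|\alpha|=m$, reading $\partial^{\alpha}\theta\big|_{{\boldsymbol\Gamma}}=0$ as the vanishing of the $\mathcal X_1^0$--trace of $\partial^\alpha\theta$, cf.\ Proposition~\ref{prop3.3}) — to approximating a single $v=\psi_i\theta\in\mathcal X^m_{\boldsymbol\Gamma}(\Omega)$ supported in $\mathcal O_i$. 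The one new feature compared with Proposition~\ref{prop3.2} is that the approximants must themselves vanish near ${\boldsymbol\Gamma}$; the device for this, as in part~$ii)$ of the proof of Proposition~\ref{prop3.2}, is to mollify with a mollifier whose compact support is offset so as to push the support of the mollified function into $\Omega$ and away from ${\boldsymbol\Gamma}$.

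When $\mathcal O_i=\mathcal O_0$, or when $\overline{\mathcal O_i}$ meets only sides not contained in ${\boldsymbol\Gamma}$ (namely $\Gamma_2$, $\Gamma_4$, or the corner $\Gamma_2\cap\Gamma_4$), the function $v$ already vanishes in a neighbourhood of ${\boldsymbol\Gamma}$, and one proceeds exactly as in parts~$i)$–$ii)$ of Proposition~\ref{prop3.2}: extend $v$ (by zero in the interior case; by the operator $P$ of Lemma~\ref{lem3.2}, after multiplying by a cut-off equal to $1$ near $\mathrm{supp}\,v$ and to $0$ near ${\boldsymbol\Gamma}$, in the boundary case), then mollify with a mollifier supported in a cone pointing into $\Omega$. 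The resulting functions are smooth, vanish near ${\boldsymbol\Gamma}$, hence lie in $\mathcal V_{\boldsymbol\Gamma}(\Omega)\cap\mathcal X^m_{\boldsymbol\Gamma}(\Omega)$, and their convergence in $\mathcal X_1^m(\Omega)$ — hence in $\mathcal X^m_{\boldsymbol\Gamma}(\Omega)$ by Proposition~\ref{prop3.5} — follows from the standard mollifier theory and Friedrichs' Lemma~\ref{lem3.2extra}, just as in Proposition~\ref{prop3.2}.

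The main case — and the main obstacle — is when $\mathcal O_i$ meets ${\boldsymbol\Gamma}$: a ball on $\Gamma_1$ or $\Gamma_3$, or a ball at one of the corners $(0,0)$, $(0,L_2)$, $(L_1,0)$. Here $v$ vanishes to order $m-1$ on the part of $\partial\Omega$ lying in ${\boldsymbol\Gamma}$ (because $\theta\in\mathcal H^m_{\boldsymbol\Gamma}(\Omega)$) and vanishes identically near the remaining sides; extending $v$ by zero across the ${\boldsymbol\Gamma}$--sides — and, at the mixed corners $(0,L_2)$, $(L_1,0)$, by a reflection across the other side, chosen local to that side so as not to disturb the vanishing on ${\boldsymbol\Gamma}$ — produces $\tilde v\in H^m(\mathbb R^2)$ supported in $\overline\Omega$ near $\mathcal O_i$. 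I would then take a mollifier $\rho\ge0$, $\int\rho=1$, whose support is a compact subset of an open cone $\mathcal C$ pointing into $\Omega$ and bounded away from the ${\boldsymbol\Gamma}$--sides — for instance $\mathrm{supp}\,\rho\subset\{1\le x\le 2,\ 1\le y\le 2\}$ near $(0,0)$, $\mathrm{supp}\,\rho\subset\{1\le x\le 2\}$ near $\Gamma_1$, $\mathrm{supp}\,\rho\subset\{1\le y\le 2\}$ near $\Gamma_3$. For $\varepsilon$ small, $\rho_\varepsilon*\tilde v$ is smooth and supported in $\mathrm{supp}\,\tilde v+\varepsilon\,\mathrm{supp}\,\rho$, which lies in a compact subset of $\Omega$ at distance $\gtrsim\varepsilon$ from ${\boldsymbol\Gamma}$; hence $\rho_\varepsilon*\tilde v\in\mathcal V_{\boldsymbol\Gamma}(\Omega)\cap\mathcal X^m_{\boldsymbol\Gamma}(\Omega)$. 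For the convergence, $\rho_\varepsilon*\tilde v\to\tilde v$ in $H^m(\mathbb R^2)$ for free, so by Proposition~\ref{prop3.5} it remains to show $T\partial^\alpha(\rho_\varepsilon*\tilde v)\to T\partial^\alpha v$ in $L^2(\Omega)$ for every $|\alpha|=m$. Letting $\Lambda\in H^{3\vee m}(\mathbb R^2)$ extend $\lambda$ with $\Lambda$ bounded away from $0$, setting $\hat T=\Lambda\partial_x+\partial_y$, and writing $g:=\partial^\alpha\tilde v$ (which equals $\widetilde{\partial^\alpha v}$ because $v$ vanishes to order $m-1$ on $\partial\Omega$), the crucial point is the identity
\[
\hat T g=\widetilde{T\partial^\alpha v}\in L^2(\mathbb R^2),
\]
obtained from the Green formula for the operator $\mathcal X_1^0$ (Proposition~\ref{prop3.3}) together with the vanishing of the $\mathcal X_1^0$--trace of $\partial^\alpha v$ on all of $\partial\Omega$ — on the ${\boldsymbol\Gamma}$--sides because $v\in\mathcal X^m_{\boldsymbol\Gamma}(\Omega)$, on the others because $v\equiv 0$ there. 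Then $\partial^\alpha(\rho_\varepsilon*\tilde v)=\rho_\varepsilon*g$ and, on $\Omega$,
\[
T\partial^\alpha(\rho_\varepsilon*\tilde v)=\hat T(\rho_\varepsilon*g)=\bigl(\Lambda\,(\rho_\varepsilon*\partial_x g)-\rho_\varepsilon*(\Lambda\partial_x g)\bigr)+\rho_\varepsilon*(\hat T g),
\]
where the first term tends to $0$ in $L^2_{\mathrm{loc}}$ by Friedrichs' Lemma~\ref{lem3.2extra} (with $a=\Lambda$, $u=g$) and $\rho_\varepsilon*(\hat T g)=\rho_\varepsilon*\widetilde{T\partial^\alpha v}\to\widetilde{T\partial^\alpha v}$ in $L^2$; restricting to $\Omega$ finishes the argument.

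The hard part is genuinely this last case, and it has two ingredients: $(a)$ the realisation that one well-placed mollification simultaneously smooths $v$ and carries its support off ${\boldsymbol\Gamma}$ — so that no shrinking cut-off, which would fail to converge in $H^m$ for $m\ge1$, is needed — which is exactly the trick used in part~$ii)$ of Proposition~\ref{prop3.2}; and $(b)$ the identity $\hat T\widetilde{\partial^\alpha v}=\widetilde{T\partial^\alpha v}$, where the \emph{full} strength of $v\in\mathcal X^m_{\boldsymbol\Gamma}(\Omega)$ — the vanishing of the $\mathcal X_1^0$--traces of the top-order derivatives, not merely $v\in\mathcal H^m_{\boldsymbol\Gamma}(\Omega)$ — is used. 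The auxiliary verifications (that $\psi_i\theta\in\mathcal X^m_{\boldsymbol\Gamma}(\Omega)$, that $\tilde v\in H^m(\mathbb R^2)$ and $\partial^\alpha\tilde v=\widetilde{\partial^\alpha v}$, and the boundedness of the various products involving $\lambda$ and $\Lambda$) are routine from Lemma~\ref{lemb.1} and the positive $m$--condition on $\lambda$.
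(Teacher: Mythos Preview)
Your approach is correct in its essentials and shares the decisive device with the paper --- an offset mollifier that smooths and simultaneously carries the support away from $\boldsymbol\Gamma$ --- but you take a longer route. The paper does \emph{not} localize at all: it extends $\theta$ by zero to $\tilde\theta$ on $\mathbb R^2$, observes that because $\partial^\alpha\theta|_{\boldsymbol\Gamma}=0$ for all $|\alpha|\le m$ the distributional identities $\partial^\alpha\tilde\theta=\widetilde{\partial^\alpha\theta}+\nu_1^\alpha$ and $T\partial^\alpha\tilde\theta=\widetilde{T\partial^\alpha\theta}+\mu^\alpha$ hold with the singular parts $\nu_1^\alpha,\mu^\alpha$ supported only on $\Gamma_2\cup\Gamma_4$, and mollifies \emph{once} with $\rho$ supported in the cone $\{0<\tfrac12 x<y<2x\}$. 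That single choice pushes $\rho_\epsilon*\nu_1^\alpha$ and $\rho_\epsilon*\mu^\alpha$ into $\{x>L_1\}\cup\{y>L_2\}\subset\Omega^c$ (so they disappear upon restriction to $\Omega$) and makes $\tilde\theta_\epsilon$ vanish in a strip near $\boldsymbol\Gamma$; Friedrichs' lemma handles the commutator exactly as you do. No partition of unity, no extension operator, no reflections, no case analysis.

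Your localization buys you the clean identity $\hat T\,\widetilde{\partial^\alpha v}=\widetilde{T\partial^\alpha v}$ (no singular term) whenever $v$ is supported near $(0,0)$ or on a single side $\Gamma_1$ or $\Gamma_3$, at the price of a separate treatment of the mixed corners $(0,L_2)$ and $(L_1,0)$. There your sketch is under-argued: once you reflect across $\Gamma_4$ (resp.\ $\Gamma_2$), $\tilde v$ is no longer the zero extension, $g=\partial^\alpha\tilde v$ is nonzero beyond $\overline\Omega$, and the identity $\hat T g=\widetilde{T\partial^\alpha v}$ fails as written; nor does reflection obviously preserve the $\mathcal X_1^0$ structure (the reflected operator is $\Lambda\partial_x-\partial_y$, not $T$), so $\hat T g\in L^2_{\mathrm{loc}}$ would need a separate check. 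The cleanest fix is precisely the paper's device: at the mixed corner too, keep the zero extension, accept the singular measure on $\Gamma_4$ (resp.\ $\Gamma_2$), and choose $\rho$ supported in $\{x>0,\,y>0\}$ so that the mollified measure lands in $\Omega^c$. Once you see this, the partition of unity is superfluous and you recover the paper's one-shot global argument.
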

\begin{rmk}\label{rmk3.1}
Theorem \ref{thm3.1} generalizes the classical density results, i.e. that $\mathcal{C}^\infty(\overline\Omega)$ is dense in $H^m(\Omega)$, and that $\mathcal{C}^\infty_c(\Omega)$ is dense in $H^m_0(\Omega)$, to the functions which vanish on part of the boundary $\partial\Omega$.

Theorem \ref{thm3.2} considers the density of the function space $\mathcal{X}_{\boldsymbol\Gamma}^m(\Omega)$ involving  functions like $T\theta$, hence this result is not included in Theorem \ref{thm3.1}.
\end{rmk}

The proof of Theorem \ref{thm3.1} is similar to or simpler than the proof of Theorem \ref{thm3.2}, we thus only prove Theorem \ref{thm3.2}.
To prove Theorem \ref{thm3.2}, we proceed similarly as in the proof of \cite[Theorem 1]{HT14a}
\begin{proof}[Proof of Theorem \ref{thm3.2}]
Let $\rho(x,y)$ be a mollifier such that $\rho(x,y)\geq 0, \int \rho \text{d}x\text{d}y=1$ and $\rho$ has compact support in $\{0<\frac{1}{2}x<y<2x\}$. 
For $\theta\in \mathcal{X}_{\boldsymbol\Gamma}^m(\Omega)$ and all $\alpha$ satisfying $|{\alpha}| \leq m$, we observe that,
\begin{equation}\label{eq122}
\partial^\alpha\tilde\theta = \widetilde{\partial^\alpha\theta} + \nu_1^\alpha,\qquad T\widetilde{\partial^\alpha\theta} = \widetilde{ T\partial^\alpha\theta} + \nu_2^\alpha,
\end{equation}
where $\nu_1^\alpha$ and $\nu_2^\alpha$ are measures supported by $\{x=L_1\}\cup\{y=L_2\}$. Therefore, we have
\begin{equation}\label{eq123}
T{\partial^\alpha\tilde\theta} = \widetilde{ T\partial^\alpha\theta} + \mu^\alpha,\qquad \forall\,|{\alpha}|\leq m,
\end{equation}
where $\mu^\alpha = T\nu_1^\alpha + \nu_2^\alpha$ is a measure also supported by $\{x=L_1\}\cup\{y=L_2\}$.

We now set $\tilde \theta_\epsilon=\rho_\epsilon*\tilde \theta$, and mollifying \eqref{eq123} with $\rho$ (see \cite{Hor65}) gives
\begin{equation}\label{eq124}
\rho_\epsilon*T{\partial^\alpha\tilde\theta} = \rho_\epsilon *\widetilde{ T\partial^\alpha\theta} + \rho_\epsilon* \mu^\alpha,\qquad \forall\,|{\alpha}|\leq m.
\end{equation}
By the choice of $\rho$, we have that $\rho_\epsilon*\mu^\alpha$ is supported in $\Omega^c$, and hence restricting \eqref{eq124} to $\Omega$ implies that
\begin{equation}\label{eq125}
(\rho_\epsilon*T{\partial^\alpha\tilde\theta})\big|_\Omega = ( \rho_\epsilon *\widetilde{ T\partial^\alpha\theta} )\big|_\Omega \rightarrow T\partial^\alpha\theta,\qquad \text{in }L^2(\Omega) \text{ as }\epsilon\rightarrow 0.
\end{equation}
Direct computation shows that
\begin{equation}\label{eq126}
T\partial^\alpha(\rho_\epsilon*\tilde\theta) - \rho_\epsilon*T\partial^\alpha\tilde\theta  =  \lambda\frac{\partial (\rho_\epsilon*\partial^\alpha\tilde\theta)}{\partial x} - \rho_\epsilon*\lambda\frac{\partial(\partial^\alpha\tilde\theta)}{\partial x} \rightarrow 0,\qquad \epsilon\rightarrow 0,
\end{equation}
where the convergence is in $L^2(\Omega)$  and achieved by applying Lemma \ref{lem3.2extra} with $\mathcal U=\mathbb R^2$, $a=\lambda$, and $u=\partial^\alpha\tilde\theta$. Combining \eqref{eq125} and \eqref{eq126} yields
\[
T\partial^\alpha(\rho_\epsilon*\tilde\theta) \big|_\Omega \rightarrow T\partial^\alpha\theta,\qquad \text{in }L^2(\Omega), \text{ as }\epsilon\rightarrow 0,
\]
that is for all $|{\alpha}|\leq m$,
\begin{equation}
T\partial^\alpha(\tilde\theta_\epsilon|_\Omega) \rightarrow T\partial^\alpha\theta,\qquad \text{in }L^2(\Omega), \text{ as }\epsilon\rightarrow 0.
\end{equation}
Similarly, by \eqref{eq122}, we have for all $|{\alpha}|\leq m$,
\begin{equation}
\partial^\alpha(\tilde\theta_\epsilon|_\Omega) = (\rho_\epsilon*\partial^\alpha\tilde\theta) |_\Omega = (\rho_\epsilon*\widetilde{ \partial^\alpha\theta })|_\Omega
\rightarrow \partial^\alpha\theta,\quad \text{in }L^2(\Omega)\text{ as } \epsilon\rightarrow 0,
\end{equation}
where we used the fact that the support of $\rho_\epsilon*\nu_1^\alpha$ is in $\Omega^c$. In conclusion, there holds
\begin{equation}
\begin{cases}
\tilde\theta_\epsilon|_\Omega \rightarrow \theta,\qquad\quad\qquad \text{ in }H^m(\Omega), \text{ as }\epsilon\rightarrow 0,\\
T\partial^\alpha(\tilde\theta_\epsilon|_\Omega) \rightarrow T\partial^\alpha\theta,\quad \text{in }L^2(\Omega), \text{ as }\epsilon\rightarrow 0,\quad\forall\,|{\alpha}|=m.
\end{cases}
\end{equation}
Finally, $\tilde \theta_\epsilon\big{|}_{\Omega}$ vanishes in a neighborhood of $\boldsymbol\Gamma$ since the support of $\tilde \theta_\epsilon\big{|}_{\Omega}$ is away from $\boldsymbol\Gamma$ by the choice of $\rho$. We thus completed the proof of Theorem \ref{thm3.2}.
\end{proof}

\begin{rmk}\label{rmk3.2}
Looking back carefully at the proof of Theorem \ref{thm3.2}, we see that Theorem \ref{thm3.2} is also valid if ${\boldsymbol\Gamma}=\Gamma_2\cup\Gamma_4$. Moreover, we say that $\lambda(x,y)$ satisfies the negative $m$-condition if $\lambda(x,y)$ satisfies \eqref{asp3.2} and the following condition:
\begin{equation}\tag{\ref{asp3.1}$'$}\label{asp3.1prime}
-c_1 \leq \lambda(x,y) \leq -c_0, 
\end{equation}
where $c_0,c_1$ are positive constants. Theorem \ref{thm3.2} is also true if ${\boldsymbol\Gamma}$ is $\Gamma_1\cup\Gamma_4$ or $\Gamma_2\cup\Gamma_3$, and $\lambda(x,y)$ satisfies the negative $m$-condition provided we choose properly the support of the mollifier.
\end{rmk}





\section{The time dependent shallow water equations operator}\label{sec-swe-operator}
In this section, we aim to study the semigroup property of the (modified) SWE operator (see below) with variable coefficients in the supercritical case on the Hilbert space $\mathcal{H}_{\boldsymbol\Gamma}^m(\Omega)$ (see \eqref{eq3.e3a}) with ${\boldsymbol\Gamma}=\Gamma_1\cup\Gamma_3$. We will successively consider the time-independent and the time-dependent cases. 
The linearized SWE operator that we consider reads
\begin{equation}\label{eq5.1}
\mathcal{A}(\widehat U) U = 
\begin{pmatrix}
\hat uu_x + \hat vu_y + g\phi_x  \\
\hat uv_x + \hat vv_y + g\phi_y  \\
\hat u\phi_x + \hat v\phi_y + \hat\phi(u_x+v_y) 
\end{pmatrix},
\end{equation}
where $\widehat U=(\hat u,\hat v,\hat\phi)^t, U=(u,v,\phi)^t$;
we set
\begin{equation}\nonumber
\mathcal{E}_1(\widehat U)=\begin{pmatrix}
\hat u&0&g\\
0&\hat u&0\\
\hat\phi&0&\hat u
\end{pmatrix},\hspace{6pt}
\mathcal{E}_2(\widehat U)=\begin{pmatrix}
\hat v&0&0\\
0&\hat v&g\\
0&\hat\phi&\hat v
\end{pmatrix}.
\end{equation}
Note that $\mathcal{E}_1$\footnote[2]{We sometimes write $\mathcal{E}_1=\mathcal{E}_1(\widehat U)$ for the sake of conciseness, etc.}, $\mathcal{E}_2$ admit a symmetrizer $S_0=\text{diag}(1,1,g/\hat\phi)$, i.e. $S_0\mathcal{E}_1, S_0\mathcal{E}_2$ are both symmetric. In order to take advantage of that, we consider the following modified SWE operator:
\begin{equation}\label{eq5.2}
\mathcal{A}^0(\widehat U) U = \mathcal{E}_1^0(\widehat U)U_x + \mathcal{E}_2^0(\widehat U) U_y,
\end{equation}
where
\begin{equation}\nonumber
\begin{split}
\mathcal{E}_1^0(\widehat U)&=S_0^{1/2}\mathcal{E}_1(\widehat U)S_0^{-1/2}=\begin{pmatrix}
\hat u&0&\sqrt{g\hat\phi}\\
0&\hat u&0\\
\sqrt{g\hat\phi}&0&\hat u
\end{pmatrix},\\
\mathcal{E}_2^0(\widehat U)&=S_0^{1/2}\mathcal{E}_2(\widehat U)S_0^{-1/2}=\begin{pmatrix}
\hat v&0&0\\
0&\hat v&\sqrt{g\hat\phi}\\
0&\sqrt{g\hat\phi}&\hat v
\end{pmatrix}.
\end{split}
\end{equation}
The reason why we choose the form \eqref{eq5.2} will become clear in the next section.

In the following, we assume that $m\geq 3$, the cases when $m=0,1,2$ are similar or simpler. Here, we only consider the generic case when $\widehat U$ does not vanish, and we first consider the time-independent case. We thus assume that $\widehat U$ only depends on the space variables $x,y$ and that
$\widehat U$ satisfies the positive $(m+1)$-condition (see \eqref{asp3.1}-\eqref{asp3.2}) introduced in Section \ref{sec-density}, i.e.
\begin{equation}\label{asp5.0}
\hat u,\hat v,\hat\phi\text{ satisfy the positive }(m+1)\text{-condition};
\end{equation}
the reason why we assume one more level of regularity on $\widehat U$ will be explained below. As we indicated before, we only study the supercritical case, and we thus assume that $\widehat U$ also satisfies the enhanced supercritical condition:
\begin{equation}\label{asp5.1}
\hat u^2 - g\hat\phi\geq c_2^2,\hspace{6pt} \hat v^2 - g\hat\phi \geq c_2^2,
\end{equation}
where $c_2$ is a positive constant.

\subsection{Boundary conditions}\label{subsec-boundary}
We aim to determine the boundary conditions which are suitable for the system
\begin{equation}\label{eq5.01}
\mathcal{A}^0(\widehat U) U = \mathcal{E}_1^0(\widehat U)U_x + \mathcal{E}_2^0(\widehat U) U_y = F,
\end{equation}
where $F\in\mathcal{H}^m_{\boldsymbol\Gamma}(\Omega)^3$. With assumption \eqref{asp5.1}, we see that $\mathcal{E}_1^0$ and $\mathcal{E}_2^0$ are both positive definite. Thus, it is natural to treat either the $x$- or $y$-direction as the time-like direction. Let us choose the $y$-direction, which means that we first need to specify the boundary conditions at $y=0$ (time-like initial conditions); choosing the $x$-direction would lead to the same result. Multiplying both sides of \eqref{eq5.01} by $(\mathcal{E}_2^{0})^{-1}$ gives
\begin{equation}\label{eq5.02}
 U_y + \mathcal{E}_2^{0}(\widehat U)^{-1} \mathcal{E}_1^0(\widehat U)U_x = \mathcal{E}_2^{0}(\widehat U)^{-1}F.
\end{equation}
We set $\kappa_0(\widehat U)=\sqrt{g(\hat u^2+\hat v^2-g\hat\phi)/\hat\phi}$, and we explicitly compute the eigenvalues of $(\mathcal{E}_2^{0})^{-1}\mathcal{E}_1^0$:
\begin{equation}
\lambda_1=\frac{\hat u\hat v+\hat\phi\kappa_0}{\hat v^2-g\hat\phi},\hspace{6pt}
\lambda_2=\frac{\hat u\hat v-\hat\phi\kappa_0}{\hat v^2-g\hat\phi},\hspace{6pt}
\lambda_3=\frac{\hat u}{\hat v}.
\end{equation}
Note that all the eigenvalues $\lambda_1,\lambda_2,\lambda_3$ of $ (\mathcal{E}_2^{0})^{-1} \mathcal{E}_1^0$ are positive under assumption \eqref{asp5.1}. Therefore, from the general hyperbolic theory (see Chapter 4 in \cite{BS07}), it is necessary and sufficient to specify the boundary conditions for $U$ at $x=0$ in order to solve  \eqref{eq5.02} in $U$. 

In conclusion, in order to solve \eqref{eq5.01} in $U$, we need to specify the boundary conditions for $U$ at $x=0$ and $y=0$. We then consider the homogeneous case and thus choose to specify the boundary conditions for $U$: 
\begin{equation}\label{eq5.3.1}
U=0, \text{ on }{\boldsymbol\Gamma}=\Gamma_{1}\cup\Gamma_3=\{x=0 \}\cup\{y=0 \}.
\end{equation}
As we will see in Lemma \ref{lem5.2} and Section \ref{sec-nonlinear-swe}, any sufficiently regular solution $U$ for \eqref{eq5.01} and for the \emph{nonlinear} equations \eqref{eq7.1} will satisfy the following compatibility boundary conditions:
\begin{equation}\label{eq5.3}
\begin{cases}
	\partial^{k}_{x}U=0, \text{ on } \Gamma_{1}=\{x=0 \},\,\forall\, 0\leq k\leq m, \\
	\partial^{k}_{y}U=0, \text{ on } \Gamma_{3}=\{y=0 \},\,\forall\, 0\leq k\leq m, 
\end{cases}	
\end{equation}
which, by differentiating with respect to the tangential direction, is equivalent to 
\begin{equation}\tag{\ref{eq5.3}$'$}\label{eq5.3prime}
\partial^\alpha U=0, \text{ on } {\boldsymbol\Gamma}=\Gamma_1\cup\Gamma_3,\,\forall\, \aiminabs{\alpha}\leq m.
\end{equation}
Hence in the following, we use the compatibility boundary conditions \eqref{eq5.3} rather than the boundary conditions \eqref{eq5.3.1} for the domain of the unbounded operator $A$ defined below.

We write $\mathcal{H}_{\boldsymbol\Gamma}^{k}=\mathcal{H}_{\boldsymbol\Gamma}^{k}(\Omega)^3$ for $k\geq 0$, in which the functions vanish on $\boldsymbol\Gamma$ (the part of the boundary $\partial\Omega$), and we endow the space $\mathcal{H}_{\boldsymbol\Gamma}^m$ with the Hilbert scalar product and norm of $H^m(\Omega)^3$:

$$\aimininner{U}{\overline U}_{\mathcal{H}_{\boldsymbol\Gamma}^m}=\sum_{\aiminabs{\alpha}\leq m}\aimininner{\partial^\alpha U}{\partial^\alpha \overline U}_{L^2(\Omega)} ,\hspace{6pt}\aiminnorm{U}_{\mathcal{H}_{\boldsymbol\Gamma}^m}=\{\aimininner{U}{ U}_{\mathcal{H}_{\boldsymbol\Gamma}^m}\}^{1/2}; $$
we then define the unbounded operator $A$ on $\mathcal{H}_{\boldsymbol\Gamma}^m$, by setting 
$AU=\mathcal{A}^0(\widehat U)U,\,\forall U\in\mathcal{D}(A)$ and 
\begin{equation}\nonumber
\mathcal{D}(A)=\{ U\in \mathcal{H}_{\boldsymbol\Gamma}^m\,:\, \mathcal{A}^0(\widehat U)U=\mathcal{E}_1^0(\widehat U)U_x + \mathcal{E}_2^0(\widehat U) U_y\in\mathcal{H}_{\boldsymbol\Gamma}^m\}.
\end{equation}

Note that the compatibility boundary conditions \eqref{eq5.3} are already taken into account in the domain $\mathcal{D}(A)$ (see also Propositions \ref{prop3.4}-\ref{prop3.5}) since $m\geq 3$. We also introduce the corresponding smooth function space  $\mathcal{V}(\Omega):=\mathcal{V}_{\boldsymbol\Gamma}(\Omega)^3$. Note that $\mathcal{V}(\Omega)$ is dense in $\mathcal{H}_{\boldsymbol\Gamma}^m$, which is a direct consequence of Theorem \ref{thm3.1}. We also have the following results.

\begin{lemma}\label{lem5.1}
We assume that $\widehat U$ satisfies the assumptions \eqref{asp5.0} and \eqref{asp5.1}. Then:
\begin{enumerate}[i)]
\item $\mathcal{V}(\Omega)\cap\mathcal{D}(A)$ is dense in $\mathcal{D}(A)$.
\item $\mathcal{D}(A)$ is dense in $\mathcal{H}_{\boldsymbol\Gamma}^m$.
\end{enumerate}
\end{lemma}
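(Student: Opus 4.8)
The plan is to deduce both statements from the density theorems of Section \ref{sec-density}, applied component-wise to the vector-valued setting, after first reducing the operator $\mathcal{A}^0(\widehat U)$ to an operator of the type treated there. The key point is that $\mathcal{E}_2^0(\widehat U)$ is invertible (indeed positive definite) under \eqref{asp5.1}, so $U \in \mathcal{D}(A)$ if and only if $U \in \mathcal{H}_{\boldsymbol\Gamma}^m$ and $U_y + \mathcal{E}_2^0(\widehat U)^{-1}\mathcal{E}_1^0(\widehat U)\, U_x \in \mathcal{H}_{\boldsymbol\Gamma}^m$, the matrix $M(\widehat U) := \mathcal{E}_2^0(\widehat U)^{-1}\mathcal{E}_1^0(\widehat U)$ being an $H^{3 \vee (m+1)}(\Omega)$ matrix-valued function by \eqref{asp5.0} and the algebra properties of $H^k$ (Lemma \ref{lemb.1}). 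Since $M$ does not commute with $\partial^\alpha$, one cannot literally invoke the scalar spaces $\mathcal{X}^m_{\boldsymbol\Gamma}$; instead I would first observe that membership in $\mathcal{D}(A)$ is a local-type property (exactly as in \eqref{eq3.e1}, using $\mathcal{A}^0(\widehat U)(\psi U) = \psi\,\mathcal{A}^0(\widehat U)U + (\mathcal{E}_1^0 \psi_x + \mathcal{E}_2^0\psi_y)U$ with the correction term in $\mathcal{H}_{\boldsymbol\Gamma}^m$), so it suffices to approximate a function supported in one of the coordinate patches $\mathcal{O}_i$ of the covering used in Proposition \ref{prop3.2}.

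For statement ii), density of $\mathcal{D}(A)$ in $\mathcal{H}^m_{\boldsymbol\Gamma}$, I would simply note that $\mathcal{V}(\Omega) \subset \mathcal{D}(A)$: a function vanishing in a neighborhood of ${\boldsymbol\Gamma}$ is smooth up to the boundary, so $\mathcal{A}^0(\widehat U)U$ lies in $H^m(\Omega)^3$ and also vanishes near ${\boldsymbol\Gamma}$ (here using that $\widehat U \in H^{3\vee(m+1)} \subset \mathcal{C}^1(\overline\Omega)$ by Sobolev embedding in dimension $2$, so the coefficients are $\mathcal{C}^{m-1}$ at least when $m \le$ something — more safely, $\mathcal{A}^0(\widehat U)U$ is a sum of products of $H^{3\vee(m+1)}$ functions with $\mathcal{C}^\infty_c$ functions, hence in $H^m$ with support away from ${\boldsymbol\Gamma}$). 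Since $\mathcal{V}(\Omega)$ is already dense in $\mathcal{H}_{\boldsymbol\Gamma}^m$ by Theorem \ref{thm3.1}, ii) follows immediately. Moreover this shows $\mathcal{V}(\Omega) \subset \mathcal{V}(\Omega)\cap\mathcal{D}(A)$, i.e. $\mathcal{V}(\Omega)\cap\mathcal{D}(A) = \mathcal{V}(\Omega)$.

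For statement i), density of $\mathcal{V}(\Omega)\cap\mathcal{D}(A) = \mathcal{V}(\Omega)$ in $\mathcal{D}(A)$, I would mimic the proof of Theorem \ref{thm3.2}. Given $U \in \mathcal{D}(A)$, use the local-type property to reduce to $U$ supported in a patch $\mathcal{O}_i$; extend $U$ by zero past the sides $\{x = L_1\}$, $\{y=L_2\}$ and by a bounded extension operator (Lemma \ref{lem3.2}) past any side not in ${\boldsymbol\Gamma}$ meeting $\mathcal{O}_i$, getting $\widetilde U$ with $\mathcal{A}^0(\widehat U)\widetilde U = \widetilde{\mathcal{A}^0(\widehat U)U} + (\text{measure supported on } \Omega^c)$, then mollify with a kernel whose support is chosen in the appropriate quadrant (pointing away from ${\boldsymbol\Gamma}$, toward $\Omega^c$), so that the measure-error disappears on restriction to $\Omega$ and the mollified functions vanish in a neighborhood of ${\boldsymbol\Gamma}$. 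The only genuinely non-routine point, and the one I expect to be the main obstacle, is controlling the commutator between the mollification and the variable-coefficient operator $\mathcal{A}^0(\widehat U)$: one must show $\mathcal{A}^0(\widehat U)(\rho_\epsilon * \partial^\alpha \widetilde U) - \rho_\epsilon * \mathcal{A}^0(\widehat U)\partial^\alpha \widetilde U \to 0$ in $L^2(\Omega)$ for $|\alpha| = m$. This is a matrix-valued Friedrichs commutator estimate; it follows from Lemma \ref{lem3.2extra} applied entrywise, since each entry of $\mathcal{E}_1^0(\widehat U), \mathcal{E}_2^0(\widehat U)$ has gradient in $L^\infty(\Omega)$ (again by \eqref{asp5.0} and Sobolev embedding, which is exactly why one level of extra regularity is imposed on $\widehat U$). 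Once this convergence is in hand, combining it with the $L^2$-convergence $\rho_\epsilon * \widetilde{\mathcal{A}^0(\widehat U)\partial^\alpha U}\big|_\Omega \to \mathcal{A}^0(\widehat U)\partial^\alpha U$ and the identity $\mathcal{A}^0(\widehat U)\partial^\alpha \widetilde U = \widetilde{\mathcal{A}^0(\widehat U)\partial^\alpha U} + (\text{measure on }\Omega^c)$ gives $\mathcal{A}^0(\widehat U)(\widetilde U_\epsilon|_\Omega) \to \mathcal{A}^0(\widehat U)U$ in $\mathcal{H}_{\boldsymbol\Gamma}^m$, together with $\widetilde U_\epsilon|_\Omega \to U$ in $H^m(\Omega)^3$, which completes the proof.
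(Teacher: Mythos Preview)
Your approach is correct but takes a genuinely different route from the paper. The paper does \emph{not} work directly with the coupled matrix operator $\mathcal{A}^0(\widehat U)$; instead it first \emph{diagonalizes} the system. Concretely, the paper exhibits an explicit matrix $\mathcal{P}=\mathcal{P}(\widehat U)$ such that $\mathcal{P}^t\mathcal{E}_1^0\mathcal{P}=\text{diag}(a_1,a_2,a_3)$ and $\mathcal{P}^t\mathcal{E}_2^0\mathcal{P}=\text{diag}(b_1,b_2,b_3)$ with all $a_i,b_i>0$; writing $\Xi=\mathcal{P}^{-1}U$, one gets $\mathcal{P}^t\mathcal{A}^0(\widehat U)U=\text{diag}(a_i)\Xi_x+\text{diag}(b_i)\Xi_y+(\text{zeroth-order terms in }\Xi)$, where the zeroth-order terms involve $\mathcal{P}_x,\mathcal{P}_y$ and hence lie in $\mathcal{H}_{\boldsymbol\Gamma}^m$ precisely because $\widehat U\in H^{m+1}$ (this is how the paper explains the extra derivative in \eqref{asp5.0}). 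This yields the characterization $\mathcal{D}(A)=\{U=\mathcal{P}\Xi:\Xi\in\mathcal{H}_{\boldsymbol\Gamma}^m,\ a_i\xi_{i,x}+b_i\xi_{i,y}\in\mathcal{H}_{\boldsymbol\Gamma}^m(\Omega)\ \forall i\}$, so that i) reduces to three independent applications of the \emph{scalar} density theorem (Theorem~\ref{thm3.2}) with $\lambda=a_i/b_i$, followed by the change of variables $U=\mathcal{P}\Xi$. Part ii) is then immediate from i), essentially as you argue.

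What each approach buys: the paper's diagonalization is shorter once $\mathcal{P}$ is found, reuses Theorem~\ref{thm3.2} off the shelf, and makes the role of the $(m+1)$-regularity on $\widehat U$ transparent. Your direct matrix-valued mollification avoids the (somewhat special) simultaneous diagonalization of $\mathcal{E}_1^0,\mathcal{E}_2^0$ and would carry over to any symmetric hyperbolic operator with positive-definite coefficient matrices, at the price of essentially re-proving a vector version of Theorem~\ref{thm3.2}. One small point: your description of the extension step is muddled (the sides $\{x=L_1\},\{y=L_2\}$ \emph{are} the sides not in ${\boldsymbol\Gamma}$, so you prescribe both zero-extension and bounded extension there); in fact no partition of unity is needed at all, since for $U\in\mathcal{D}(A)\subset\mathcal{H}_{\boldsymbol\Gamma}^m$ the global zero-extension $\tilde U$ already produces singular measures only on $\{x=L_1\}\cup\{y=L_2\}$, and a mollifier supported in $\{0<\tfrac12 x<y<2x\}$ pushes them out of $\Omega$ while making $\tilde U_\epsilon|_\Omega$ vanish near ${\boldsymbol\Gamma}$, exactly as in the proof of Theorem~\ref{thm3.2}.
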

Lemma \ref{lem5.1} is proven below. In order to prove it, we need an equivalent characterization of the domain $\mathcal{D}(A)$, which will allow us to use the density results established in Section \ref{sec-density}. We introduce the notations $\kappa,\Xi,\mathcal{P}$ such that
\begin{equation}\label{eq5.5}
\begin{split}
\kappa(\widehat U) = \sqrt{ \hat{u}^2 + \hat{v}^2 - g\hat\phi },\hspace{6pt}
\mathcal{P}(\widehat U)^{-1} = \begin{pmatrix}
\hat v &-\hat u&\kappa        \\
\hat v  &-\hat u    &-\kappa\\
 \hat u   & \hat v & \sqrt{g\hat\phi}
\end{pmatrix},\hspace{6pt}
\Xi=\begin{pmatrix}\xi_1\\ \xi_2 \\ \xi_3 \end{pmatrix} = \mathcal{P}^{-1}U;
\end{split}
\end{equation}
then direct computations give
\begin{equation}\label{eq5.6}
\begin{split}
\mathcal{P}^t\mathcal{E}_1^0\mathcal{P}&=\text{diag}(\frac{\hat u\kappa + \sqrt{g\hat\phi}\hat v}{2(\hat u^2+\hat v^2)\kappa}, \frac{\hat u\kappa - \sqrt{g\hat\phi}\hat v}{2(\hat u^2+\hat v^2)\kappa}, \frac{\hat u}{\hat u^2 + \hat v^2}) =:\text{diag}(a_1,a_2,a_3), \\
\mathcal{P}^t\mathcal{E}_2^0\mathcal{P}&=\text{diag}(\frac{\hat v\kappa + \sqrt{g\hat\phi}\hat u}{2(\hat u^2+\hat v^2)\kappa}, \frac{\hat v\kappa - \sqrt{g\hat\phi}\hat u}{2(\hat u^2+\hat v^2)\kappa}, \frac{\hat v}{\hat u^2 + \hat v^2}) =:\text{diag}(b_1,b_2,b_3).
\end{split}
\end{equation}
We then rewrite the modified SWE operator as
\begin{equation}\label{eq5.7}
\begin{split}
\mathcal{P}^t\mathcal{A}^0(\widehat U)U&= \mathcal{P}^t\mathcal{E}_1^0(\mathcal{P}\Xi)_x + \mathcal{P}^t\mathcal{E}_2^0(\mathcal{P}\Xi)_y \\
&=\mathcal{P}^t\mathcal{E}_1^0\mathcal{P}\Xi_x + \mathcal{P}^t\mathcal{E}_2^0\mathcal{P}\Xi_y + \mathcal{P}^t\mathcal{E}_1^0\mathcal{P}_x\Xi + \mathcal{P}^t\mathcal{E}_2^0\mathcal{P}_y\Xi \\
&=\text{diag}(a_1,a_2,a_3)\Xi_x + \text{diag}(b_1,b_2,b_3)\Xi_y+ \mathcal{P}^t\mathcal{E}_1^0\mathcal{P}_x\Xi + \mathcal{P}^t\mathcal{E}_2^0\mathcal{P}_y\Xi. \\
\end{split}
\end{equation}

Direct computations also show that $a_i,b_i, i\in\{1,2,3\}$ are all positive away from $0$, and thus both $\mathcal{E}_1^0$ and $\mathcal{E}_2^0$ are symmetric and positive definite.
Using repeatedly Lemma \ref{lemb.1} and noting that $\widehat U$ belongs to $ H^{m+1}(\Omega)$, we see that $\mathcal{E}_1^0,\mathcal{E}_2^0,\kappa, \mathcal{P}, \mathcal{P}^{-1}, a_i,b_i, i\in\{1,2,3\}$ belong to $H^{m+1}(\Omega)$. Furthermore, the last two terms from the right-hand side of \eqref{eq5.7} both belong to $H^m(\Omega)^3$, and also to $\mathcal{H}_{\boldsymbol\Gamma}^m$ since $\mathcal{P}=\mathcal{P}(\widehat U)$ belongs to $H^{m+1}(\Omega)$, and that is the reason why we impose one more regularity on $\widehat U$. Therefore, saying that $\mathcal{A}^0(\widehat U)U$ belongs to $\mathcal{H}_{\boldsymbol\Gamma}^m$ is equivalent to saying that $a_i\xi_{i,x} + b_i\xi_{i,y}$ belongs to $\mathcal{H}_{\boldsymbol\Gamma}^m(\Omega)$ for all $i\in\{1,2,3\}$. Hence, the equivalent characterization of the domain $\mathcal{D}(A)$ is
\begin{equation}\nonumber
\mathcal{D}(A)=\{ U=\mathcal{P}\Xi\,:\,\Xi\in\mathcal{H}_{\boldsymbol\Gamma}^m,a_i\xi_{i,x} + b_i\xi_{i,y}\in\mathcal{H}_{\boldsymbol\Gamma}^m(\Omega),\forall\, i\in\{1,2,3\}\}.
\end{equation}
\begin{proof}[Proof of Lemma \ref{lem5.1}]

We remark that the statement $ii)$ directly follows from $i)$ since $\mathcal{V}(\Omega)$ is dense in $\mathcal{H}_{\boldsymbol\Gamma}^m$. We thus only need to prove $i)$.

Using the new characterization of $\mathcal{D}(A)$ and applying Theorem \ref{thm3.2} with $\lambda=a_i/b_i$ for all $i\in\{1,2,3\}$, we see that each component of $\Xi$ can be approximated by smooth functions which vanish in a neighborhood of ${\boldsymbol\Gamma}=\Gamma_1\cup\Gamma_3$. Then transforming back to the variable $U$, we obtain that $U$ can also be approximated by smooth functions in $\mathcal{V}(\Omega)$. The proof is complete.
\end{proof}

\subsection{Energy estimate for the operator $A$}In the following, we denote by $\aimininner{\cdot}{\cdot}$ the $L^2(\Omega)$-scalar product, and observe that if $\aiminnorm{\widehat U}_{H^{m}(\Omega)} \leq M$ with $m\geq 3$ (see Remark \ref{rmk5.0} below for the cases when $m=0,1,2$), then the $H^m$-norm of the functions $\mathcal{E}_1^0(\widehat U)$,
$\mathcal{E}_2^0(\widehat U)$ are bounded by some constant $C(M)$. Here and again in this section, the constant $C(M)$ may be different at different places, but it only depends on $M$. 
Then for $U$ smooth in $\mathcal{D}(A)$, we compute
\begin{equation}\label{eq5.9}
\begin{split}
\aimininner{-AU}{U}_{\mathcal{H}_{\boldsymbol\Gamma}^m} &= \sum_{\aiminabs{\alpha}\leq m}-\aimininner{\partial^\alpha(\mathcal{E}_1^0U_x + \mathcal{E}_2^0 U_y)}{\partial^\alpha U} \\
&=\sum_{\aiminabs{\alpha}\leq m} - \aimininner{\mathcal{E}_1^0(\partial^\alpha U)_x + \mathcal{E}_2^0 (\partial^\alpha U)_y}{\partial^\alpha U} \\
&\hspace{20pt} + \sum_{\aiminabs{\alpha}\leq m} -\big( \aimininner{[\partial^\alpha, \mathcal{E}_1^0]U_x}{\partial^\alpha U} + \aimininner{[\partial^\alpha, \mathcal{E}_2^0]U_y}{\partial^\alpha U} \big).
\end{split}
\end{equation}
Integrating by parts on the first summation at the right-hand side of \eqref{eq5.9} gives
\begin{equation}\label{eq5.10}
\begin{split}
\sum_{\aiminabs{\alpha}\leq m}\frac{1}{2}\bigg(-\int_0^{L_2}\aimininner{\mathcal{E}_1^0\partial^\alpha U}{\partial^\alpha U}\bigg|_{x=0}^{x=L_1}dy &- \int_0^{L_1}\aimininner{\mathcal{E}_2^0\partial^\alpha U}{\partial^\alpha U}\bigg|_{y=0}^{y=L_2}dx  \\
&+ \aimininner{\big(\mathcal{E}_{1,x}^0 + \mathcal{E}_{2,y}^0\big)\partial^\alpha U}{\partial^\alpha U}  \bigg).
\end{split}
\end{equation}
Using the compatibility boundary conditions \eqref{eq5.3prime} and that both $\mathcal{E}_1^0$ and $\mathcal{E}_2^0$ are positive definite to dispense with the boundary terms in \eqref{eq5.10}, we find that \eqref{eq5.10} is less than 
$ \sum_{\aiminabs{\alpha}\leq m}\frac{1}{2} \aimininner{\big(\mathcal{E}_{1,x}^0 + \mathcal{E}_{2,y}^0\big)\partial^\alpha U}{\partial^\alpha U}$,
which is dominated by
$\frac{1}{2}\big( \aiminnorm{\mathcal{E}_{1,x}^0}_{L^\infty} + \aiminnorm{\mathcal{E}_{2,y}^0}_{L^\infty} \big) \aiminnorm{U}_{\mathcal{H}_{\boldsymbol\Gamma}^m}^2$, which is finally bounded by $C(M)\aiminnorm{U}_{\mathcal{H}_{\boldsymbol\Gamma}^m}^2$ from the Sobolev embedding $H^2(\Omega)\subset L^\infty(\Omega)$, and $m\geq 3$. Applying Lemma \ref{lemb.1} \ref{en:item2}) with $k=m$ on the commutators from the right-hand side of \eqref{eq5.9}, we obtain that the second summation at the right-hand side of \eqref{eq5.9} is bounded by
\begin{equation}\label{eq5.11}
 \sum_{\aiminabs{\alpha}\leq m}C\bigg( \aiminnorm{\mathcal{E}_1^0}_{H^m} \aiminnorm{U_x}_{H^{\aiminabs{\alpha}-1}}\aiminnorm{\partial^\alpha U}_{L^2} + \aiminnorm{\mathcal{E}_2^0}_{H^m} \aiminnorm{U_y}_{H^{\aiminabs{\alpha}-1}}\aiminnorm{\partial^\alpha U}_{L^2} \bigg),
\end{equation}
which in turn is bounded by $C(M)\aiminnorm{U}_{\mathcal{H}_{\boldsymbol\Gamma}^m}^2$.

Gathering the estimates for \eqref{eq5.10} and \eqref{eq5.11}, \eqref{eq5.9} implies that
\begin{equation}
\aimininner{-AU}{U}_{\mathcal{H}_{\boldsymbol\Gamma}^m} \leq C_1(M)\aiminnorm{U}_{\mathcal{H}_{\boldsymbol\Gamma}^m}^2,
\end{equation}
which is
\begin{equation}\label{eq5.12}
\aimininner{AU}{U}_{\mathcal{H}_{\boldsymbol\Gamma}^m} \geq -C_1(M)\aiminnorm{U}_{\mathcal{H}_{\boldsymbol\Gamma}^m}^2.
\end{equation}
Thanks to Lemma \ref{lem5.1} $i$), we conclude that \eqref{eq5.12} holds for all $U$ in $\mathcal{D}(A)$.

\begin{rmk}\label{rmk5.0}
In the cases when $m=0,1,2$, we can easily check that the energy estimate \eqref{eq5.12} for the operator $A$ is also valid. Indeed, the estimate for the boundary terms is the same, and the estimate for the  commutators are simpler by direct calculation with the assumption that $\widehat U$ satisfies the positive $m$-condition ($m=0,1,2$ see \eqref{asp3.1}-\eqref{asp3.2}).
\end{rmk}

\subsection{The surjectivity of $\omega+A$}
We set $\omega_{0} = C_1(M)$, where $C_1(M)$ is the constant appearing in \eqref{eq5.12}, and we prove the following lemma.

\begin{lemma}\label{lem5.2}
Let $\omega$ be a real number which is greater than $\omega_0$. Then if $F$ belongs to $\mathcal{H}_{\boldsymbol\Gamma}^m$ with $m\geq 3$, the equation
\begin{equation}\label{eq5.15}
\mathcal{E}_1^0(\widehat U) U_x + \mathcal{E}_2^0(\widehat U) U_y + \omega U = F,
\end{equation}
associated with the following boundary conditions
\begin{equation}\label{eq5.16.1}
U=0, \text{ on }{\boldsymbol\Gamma}=\Gamma_{1}\cup\Gamma_{3}=\{y=0 \}\cup\{x=0 \},
\end{equation}
admits a unique solution $U$ in $\mathcal{D}(A)$.
\end{lemma}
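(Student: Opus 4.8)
\emph{Uniqueness and the a priori bound} come straight from the energy estimate \eqref{eq5.12}: if $U\in\mathcal D(A)$ solves $\mathcal A^0(\widehat U)U+\omega U=F$, taking the $\mathcal H_{\boldsymbol\Gamma}^m$ scalar product with $U$ and invoking \eqref{eq5.12} gives
\[
(\omega-\omega_0)\,\aiminnorm{U}_{\mathcal H_{\boldsymbol\Gamma}^m}^2\le \aiminnorm{F}_{\mathcal H_{\boldsymbol\Gamma}^m}\,\aiminnorm{U}_{\mathcal H_{\boldsymbol\Gamma}^m},
\]
so $U$ is unique and $\aiminnorm{U}_{\mathcal H_{\boldsymbol\Gamma}^m}\le(\omega-\omega_0)^{-1}\aiminnorm{F}_{\mathcal H_{\boldsymbol\Gamma}^m}$; in particular $\omega+A$ is injective with closed range, and the real content of the lemma is that this range is all of $\mathcal H_{\boldsymbol\Gamma}^m$.

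For \emph{existence} I would argue in two stages. \emph{Stage 1: a low-regularity solution.} Under \eqref{asp5.1} the matrices $\mathcal E_1^0,\mathcal E_2^0$ are symmetric positive definite, so \eqref{eq5.15} is a symmetric positive (Friedrichs) system; the boundary matrix is negative definite on the two sides making up $\boldsymbol\Gamma$ and positive definite on $\Gamma_2\cup\Gamma_4$, so \eqref{eq5.16.1} is the maximal admissible boundary condition on $\boldsymbol\Gamma$ and nothing is prescribed on $\Gamma_2\cup\Gamma_4$, while the formal adjoint $-\mathcal E_1^0\partial_x-\mathcal E_2^0\partial_y-(\mathcal E_{1,x}^0+\mathcal E_{2,y}^0)$ is a hyperbolic operator of the same type carrying the dual admissible condition on $\Gamma_2\cup\Gamma_4$ and obeying an analogous a priori estimate (cf.\ Remark~\ref{rmk3.2}). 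The classical duality argument for such systems — or, equivalently, a Faedo--Galerkin scheme in $\mathcal V(\Omega)$, dense in $\mathcal H_{\boldsymbol\Gamma}^0=L^2(\Omega)^3$, the finite-dimensional problems being solvable thanks to the coercivity \eqref{eq5.12} taken at $m=0$ — produces a solution $U\in L^2(\Omega)^3$ of \eqref{eq5.15}, with \eqref{eq5.16.1} read through the trace Proposition~\ref{prop3.3}.

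\emph{Stage 2: regularity.} Since $F\in\mathcal H_{\boldsymbol\Gamma}^m$ I would show that this solution in fact lies in $\mathcal H_{\boldsymbol\Gamma}^m$ with $\mathcal A^0(\widehat U)U\in\mathcal H_{\boldsymbol\Gamma}^m$, hence $U\in\mathcal D(A)$. This is obtained by running the energy method of Section~\ref{sec-swe-operator} on the equations satisfied by the difference quotients, then the derivatives $\partial^\alpha U$ with $|\alpha|\le m$: the commutators $[\partial^\alpha,\mathcal E_i^0]U_x$, $[\partial^\alpha,\mathcal E_i^0]U_y$ are controlled by Lemma~\ref{lemb.1} because \eqref{asp5.0} gives $\mathcal E_1^0,\mathcal E_2^0\in H^{m+1}(\Omega)$; the boundary terms on $\Gamma_2\cup\Gamma_4$ carry the favourable sign by positive definiteness of $\mathcal E_1^0,\mathcal E_2^0$; and the boundary terms on $\boldsymbol\Gamma$ vanish because, by Theorem~\ref{thm3.2}, these integrations by parts can be performed on smooth functions vanishing near $\boldsymbol\Gamma$, the vanishing of $F$ and its derivatives on $\boldsymbol\Gamma$ supplying the compatibility needed at the corners where $\boldsymbol\Gamma$ meets $\Gamma_2\cup\Gamma_4$. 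The change of variables $U=\mathcal P(\widehat U)\Xi$ of \eqref{eq5.7}, which uncouples the principal part into scalar transport operators $a_i\partial_x+b_i\partial_y$ with $a_i,b_i$ bounded below, together with the description of $\mathcal D(A)$ used in the proof of Lemma~\ref{lem5.1}, lets one reduce this step to scalar transport equations to which Theorem~\ref{thm3.2} applies with $\lambda=a_i/b_i$. Once $U\in H^m(\Omega)^3$ with $U\big|_{\boldsymbol\Gamma}=0$ is known, $\mathcal A^0(\widehat U)U=F-\omega U\in H^m(\Omega)^3$ automatically, and Propositions~\ref{prop3.4}--\ref{prop3.5} then supply the full compatibility conditions \eqref{eq5.3prime} and the conclusion $U\in\mathcal D(A)$.

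The main obstacle is Stage 2, namely the up-to-the-boundary $H^m$ regularity near the two corners of the rectangle where the Dirichlet part $\boldsymbol\Gamma$ meets the outflow sides $\Gamma_2,\Gamma_4$: there the domain is only Lipschitz, so the smooth-domain regularity theory for hyperbolic boundary value problems (e.g.\ \cite{BS07}) does not apply off the shelf, and it is precisely in order to carry out the energy estimates cleanly near these corners that the density and trace results of Section~\ref{sec-density} were established. (An alternative that sidesteps the corners is to treat \eqref{eq5.15} as a non-autonomous evolution equation in $y$ — the eigenvalues of $(\mathcal E_2^0)^{-1}\mathcal E_1^0$ being positive under \eqref{asp5.1}, $y=0$ is an inflow boundary while $x=0$ and $x=L_1$ are inflow and outflow for the one-dimensional cross-section — and to use the evolution-system machinery of Appendix~\ref{sec-semigroup}; but the corner analysis is more in line with the tools assembled here.)
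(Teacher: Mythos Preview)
Your uniqueness argument and your Stage~2 energy estimate are essentially what the paper does, but for existence the paper takes precisely the route you call the ``alternative'' and then set aside: it multiplies \eqref{eq5.15} by $(\mathcal E_2^0)^{-1}$, treats $y$ as a time variable, and invokes the general theory of one-dimensional hyperbolic initial--boundary value problems (\cite[Chapter~9]{BS07}, \cite[Chapter~7]{CP82}) on the interval $(0,L_1)$, where the boundary is just two points and the uniform Lopatinski\u\i{} condition is immediate. This completely sidesteps the corner difficulty you flag: there are no two-dimensional corners in a one-dimensional boundary value problem, and the ``initial'' side $y=0$ is handled by the standard compatibility-by-induction argument (from $U|_{y=0}=0$ and $F\in\mathcal H_{\boldsymbol\Gamma}^m$ one reads off $\partial_y^kU|_{y=0}=0$ for $k\le m$ directly from the equation). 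The $H^m$ regularity is then obtained by the straightforward energy estimate on $\partial^\alpha U$ that you describe, with the boundary terms on $\Gamma_2\cup\Gamma_4$ having the good sign and those on $\boldsymbol\Gamma$ vanishing by the compatibility conditions just established---no difference quotients, no diagonalization via $\mathcal P$, and no appeal to Theorem~\ref{thm3.2} are needed at this step.

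Your Friedrichs/duality approach is not wrong in spirit, but the corner regularity you yourself identify as the main obstacle is a genuine one, and the machinery of Section~\ref{sec-density} was built for the \emph{density} question (used in Lemma~\ref{lem5.1} to justify the a priori estimate \eqref{eq5.12}), not for a direct attack on elliptic/hyperbolic regularity near corners. The paper's choice to view the stationary problem as a $1$D evolution is the cleaner path here, and it is worth noting that the tools of Appendix~\ref{sec-semigroup} are \emph{not} invoked for this lemma: the existence is borrowed wholesale from \cite{BS07,CP82}.
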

\begin{proof}
Since $\mathcal{E}_2^0$ is nonsingular, multiplying by $(\mathcal{E}_2^0)^{-1}$ on both sides of \eqref{eq5.15} gives
\begin{equation}\label{eq5.15.1}
 U_y + \mathcal{E}_2^0(\widehat U)^{-1}\mathcal{E}_1^0(\widehat U) U_x + \mathcal{E}_2^0(\widehat U)^{-1} \omega U =  \mathcal{E}_2^0(\widehat U)^{-1}F.
\end{equation}
Let us treat again the $y$-direction as the time-like direction; then \eqref{eq5.15.1} becomes a 1-dimensional hyperbolic system.
We observe that the boundary ($x$-direction only) is a regular open subset in $\mathbb{R}$, and that the boundary conditions satisfy the uniform Lopatinskii condition (see \cite[Chapter 9]{BS07} or \cite[Chapter 7]{CP82}). Hence, the general results in \cite[Chapter 9]{BS07} (see also \cite[Chapter 7]{CP82}) guarantee the existence and uniqueness of a solution $U$ for \eqref{eq5.15.1} and \eqref{eq5.16.1}. 
Using \eqref{eq5.15.1} and the boundary condition \eqref{eq5.16.1} at $y=0$, we can conclude by induction that
$\partial^k_y U|_{y=0} = 0$ for all $0\leq k\leq m$. Similar results also holds for the $x$-direction. Therefore, the solution $U$ also satisfies the compatibility boundary conditions:
\begin{equation}\label{eq5.16}
\begin{cases}
	\partial^{k}_{x}U=0, \text{ on } \Gamma_{1}=\{x=0 \},\,\forall\, 0\leq k\leq m, \\
	\partial^{k}_{y}U=0, \text{ on } \Gamma_{3}=\{y=0 \},\,\forall\, 0\leq k\leq m,
\end{cases}	
\end{equation}
since $F$ belongs to $\mathcal{H}_{\boldsymbol\Gamma}^m$ (i.e. it satisfies \eqref{eq5.16} with $m$ replaced by $m-1$).
It remains to show that the solution $U$ actually belongs to $\mathcal{H}_{\boldsymbol\Gamma}^m$ if $F$ belongs to $\mathcal{H}_{\boldsymbol\Gamma}^m$. 

For all $0\leq \aiminabs{\alpha}\leq m$, we deduce from \eqref{eq5.15} that $\partial^\alpha U$ satisfies the following equations
\begin{equation}\label{eq5.17}
\mathcal{E}_1^0 (\partial^\alpha U)_x + \mathcal{E}_2^0 (\partial^\alpha U)_y + \omega \partial^\alpha U = \partial^\alpha F - [\partial^\alpha, \mathcal{E}_1^0]U_x - [\partial^\alpha, \mathcal{E}_2^0]U_y.
\end{equation}
Taking the $L^{2}(\Omega)$ scalar product of each side of \eqref{eq5.17} with $\partial^\alpha U$ and integrating by parts, we arrive at
\begin{equation}\label{eq5.18}
\begin{split}
\omega\aiminnorm{\partial^\alpha U}_{L^2(\Omega)}^2 + &\frac{1}{2}\bigg(\int_0^{L_2}\aimininner{\mathcal{E}_1^0\partial^\alpha U}{\partial^\alpha U}\bigg|_{x=0}^{x=L_1}dy + \int_0^{L_1}\aimininner{\mathcal{E}_2^0\partial^\alpha U}{\partial^\alpha U}\bigg|_{y=0}^{y=L_2}dx \bigg) \\
&=  \aimininner{\partial^\alpha F}{\partial^\alpha U} + \frac{1}{2}\aimininner{\big(\mathcal{E}_{1,x}^0 + \mathcal{E}_{2,y}^0\big)\partial^\alpha U}{\partial^\alpha U} \\
&\hspace{20pt}-\aimininner{[\partial^\alpha, \mathcal{E}_1^0]U_x}{\partial^\alpha U} - \aimininner{[\partial^\alpha, \mathcal{E}_2^0]U_y}{\partial^\alpha U}.
\end{split}
\end{equation}
The compatibility boundary conditions \eqref{eq5.16} and the fact that $\mathcal{E}_1^0$ and $\mathcal{E}_1^0$ are both positive definite imply that the boundary terms in the left-hand side of \eqref{eq5.18} are nonnegative, and thus the left-hand side of \eqref{eq5.18} is larger than $ \omega\aiminnorm{\partial^\alpha U}_{L^2(\Omega)}^2$. For the right-hand side of \eqref{eq5.18}, we use the Cauchy-Schwarz inequality to estimate the first term and the same arguments as for \eqref{eq5.10}-\eqref{eq5.12} to estimate the last three terms; then summing \eqref{eq5.18} for all $\aiminabs{\alpha}\leq m$ yields: 
\begin{equation}\label{eq5.19}
\begin{split}
\omega\aiminnorm{U}_{\mathcal{H}_{\boldsymbol\Gamma}^m}^2 &\leq \aiminnorm{F}_{\mathcal{H}_{\boldsymbol\Gamma}^m}\aiminnorm{U}_{\mathcal{H}_{\boldsymbol\Gamma}^m} + C_1(M)\aiminnorm{U}_{\mathcal{H}_{\boldsymbol\Gamma}^m}^2 \\
&\leq \aiminnorm{F}_{\mathcal{H}_{\boldsymbol\Gamma}^m}\aiminnorm{U}_{\mathcal{H}_{\boldsymbol\Gamma}^m} + \omega_0\aiminnorm{U}_{\mathcal{H}_{\boldsymbol\Gamma}^m}^2, \\
\end{split}
\end{equation}
with $\omega_0$ being a constant depending only on $M$. This implies that $U$ belongs to $\mathcal{H}_{\boldsymbol\Gamma}^m$ by the assumption $\omega>\omega_0$. Finally, $AU$ also belongs to  $\mathcal{H}_{\boldsymbol\Gamma}^m$ since $\partial^\alpha (AU)|_{\boldsymbol\Gamma} = 0,\,\forall \aiminabs{\alpha}\leq m-1$ and
\begin{equation}\nonumber
	AU=\mathcal{E}_1^0(\widehat U) U_x +\mathcal{E}_2^0(\widehat U) U_y =  F-\omega U .
\end{equation}
Therefore $U$ belongs to $\mathcal{D}(A)$, and the proof is complete.
\end{proof}

\subsection{Semigroup}We now set $B=\omega_0+A$, with $\mathcal{D}(B)=\mathcal{D}(A)$; then $(B,\mathcal{D}(B))$ is a positive operator on $\mathcal{H}_{\boldsymbol\Gamma}^m$ by virtue of \eqref{eq5.12}, and $\omega+B$ (=$\omega+\omega_{0}+A$) is surjective for all $\omega>0$ thanks to Lemma \ref{lem5.2}. Hence, Theorem \ref{thm2.1} (the Hille-Yosida theorem) implies that the operator $-B$ generates a contraction semigroup on $\mathcal{H}_{\boldsymbol\Gamma}^m$, and we then obtain the following result as a consequence of Theorem \ref{thm2.3} (Bounded Perturbation Theorem I).
\begin{thm}\label{thm5.1}
	The operator $(-A, \mathcal{D}(A))$ generates a quasi-contraction semigroup $(R(t))_{t\geq 0}$ on $\mathcal{H}_{\boldsymbol\Gamma}^m=\mathcal{H}_{\boldsymbol\Gamma}^m(\Omega)^3$ satisfying $\aiminnorm{R(t)} \leq e^{\omega_{0}t},\,\forall t\geq 0$.
\end{thm}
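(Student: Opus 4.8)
The plan is to assemble the conclusion from the three ingredients already prepared in the section, so the proof of Theorem~\ref{thm5.1} is almost entirely a matter of invoking the abstract semigroup machinery collected in Appendix~\ref{sec-semigroup}. First I would recall the energy estimate \eqref{eq5.12}, valid on all of $\mathcal{D}(A)$ by Lemma~\ref{lem5.1}~$i)$ together with a density argument: $\aimininner{AU}{U}_{\mathcal{H}_{\boldsymbol\Gamma}^m}\geq -\omega_0\aiminnorm{U}_{\mathcal{H}_{\boldsymbol\Gamma}^m}^2$ with $\omega_0=C_1(M)$. Setting $B=\omega_0+A$ with $\mathcal{D}(B)=\mathcal{D}(A)$, this says exactly that $\aimininner{BU}{U}_{\mathcal{H}_{\boldsymbol\Gamma}^m}\geq 0$ for all $U\in\mathcal{D}(B)$, i.e. $B$ is a positive (accretive) operator on the Hilbert space $\mathcal{H}_{\boldsymbol\Gamma}^m$.

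Next I would record that $\mathcal{D}(B)=\mathcal{D}(A)$ is dense in $\mathcal{H}_{\boldsymbol\Gamma}^m$ (Lemma~\ref{lem5.1}~$ii)$) and that $\omega+B=(\omega+\omega_0)+A$ is surjective onto $\mathcal{H}_{\boldsymbol\Gamma}^m$ for every $\omega>0$: indeed $\omega+\omega_0>\omega_0$, so Lemma~\ref{lem5.2} applies and produces, for each $F\in\mathcal{H}_{\boldsymbol\Gamma}^m$, a solution $U\in\mathcal{D}(A)$ of $(\omega+\omega_0+A)U=F$. (Uniqueness, hence bijectivity, also follows from the energy estimate: if $(\omega+B)U=0$ then $\omega\aiminnorm{U}_{\mathcal{H}_{\boldsymbol\Gamma}^m}^2\leq -\aimininner{BU}{U}\leq 0$.) With density of the domain, accretivity, and surjectivity of $\omega+B$ for all $\omega>0$ in hand, the Hille--Yosida theorem (Theorem~\ref{thm2.1}) yields that $-B$ generates a contraction semigroup $(S(t))_{t\geq 0}$ on $\mathcal{H}_{\boldsymbol\Gamma}^m$, with $\aiminnorm{S(t)}\leq 1$.

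Finally, since $A=B-\omega_0$, i.e. $-A=-B+\omega_0 I$, and $\omega_0 I$ is a bounded operator on $\mathcal{H}_{\boldsymbol\Gamma}^m$, the Bounded Perturbation Theorem (Theorem~\ref{thm2.3}) shows that $-A$ generates a strongly continuous semigroup $(R(t))_{t\geq 0}$; moreover the standard perturbation estimate gives $R(t)=e^{\omega_0 t}S(t)$, whence $\aiminnorm{R(t)}\leq e^{\omega_0 t}\aiminnorm{S(t)}\leq e^{\omega_0 t}$ for all $t\geq 0$, which is precisely the quasi-contraction bound claimed. I do not anticipate a genuine obstacle here: all the analytic work (the characterization of $\mathcal{D}(A)$, the density theorems of Section~\ref{sec-density}, the energy estimate, and the solvability of the resolvent equation via the one-dimensional hyperbolic theory in \cite{BS07}) has already been carried out, so the only care needed is bookkeeping---confirming that $\omega_0$ is exactly the constant $C_1(M)$ from \eqref{eq5.12} so that $B$ is accretive rather than merely quasi-accretive, and checking that the hypotheses of Theorems~\ref{thm2.1} and~\ref{thm2.3} are met verbatim.
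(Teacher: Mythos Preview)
Your proposal is correct and follows essentially the same route as the paper: define $B=\omega_0+A$, use \eqref{eq5.12} for positivity, Lemma~\ref{lem5.2} for surjectivity of $\omega+B$, then apply the Hille--Yosida theorem (Theorem~\ref{thm2.1}) to get a contraction semigroup for $-B$ and the Bounded Perturbation Theorem~\ref{thm2.3} to pass to $-A$. The extra details you spell out (density from Lemma~\ref{lem5.1}~$ii)$, uniqueness from the energy estimate, the explicit relation $R(t)=e^{\omega_0 t}S(t)$) are all fine and consistent with the paper's argument.
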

\begin{rmk}\label{rmk5.1}
The constant $\omega_0$ in Theorem \ref{thm5.1} only depends on the $H^m$-norm of $\widehat U$.
\end{rmk}

\subsection{Time-dependent modified SWE operator}\label{subsec-tSWE-operator}

In this subsection, we consider the case where $\widehat U$ also depends on the time variable $t$, and we impose the following assumptions on $\widehat U$:

\addtocounter{equation}{1}
\begin{enumerate}[(\theequation a)]
\item $\widehat U(t)$ satisfies the positive $(m+2)$-condition for all $t\in[0,T]$, i.e. $\widehat U(t)$ belongs to $H^{m+2}(\Omega)$ ($m\geq 3$) and it satisfies the condition \eqref{asp3.1} with $c_0,c_1$ independent of $t\in[0,T]$;\label{eq5.e1}
\item $\widehat U$ belongs to $\mathcal{C}([0,T];H^{m+1}(\Omega))$;\label{eq5.e2}
\item $\widehat U$ satisfies the supercritical condition \eqref{asp5.1} with $c_2$ independent of $t\in[0,T]$.\label{eq5.e3}
\end{enumerate}
Under these new assumptions, we see that the unbounded operator $-A$ defined in Subsection \ref{subsec-boundary} generates a strongly continuous semigroup with the same arguments as above, once we treat the time variable $t$ as a parameter. To be more precise, we 
define a family of unbounded operators $A(t)U$ on the Hilbert space $H$ with $A(t)U= \mathcal{A}^0(\widehat U(t))U,\,\forall U\in\mathcal{D}(A(t)|_H)$ and
\begin{equation}\nonumber
\mathcal{D}(A(t)|_H)=\{ U\in H\,:\, \mathcal{A}^0(\widehat U(t))U=\mathcal{E}_1^0(\widehat U(t))U_x + \mathcal{E}_2^0(\widehat U(t)) U_y\in H\},
\end{equation}
where $H=\mathcal{H}_{\boldsymbol\Gamma}^{k}(=\mathcal{H}_{\boldsymbol\Gamma}^{k}(\Omega)^3)$ and $k$ can be either $m-1$, $m$ or $m+1$.

Since the positive $(m+2)$-condition implies the positive $(m+1)$- and $m$-conditions, we thus obtain the following result as an immediate consequence of Theorem \ref{thm5.1}.
\begin{cor}
The operators $\{-A(t)\}_t$ generate quasi-contraction semigroups $(R_{t,1}(s))_{s\geq 0}$ on $\mathcal{H}_{\boldsymbol\Gamma}^{m-1}$, $(R_t(s))_{s\geq 0}$ on $\mathcal{H}_{\boldsymbol\Gamma}^m$, and $(R_{t,2}(s))_{s\geq 0}$ on $\mathcal{H}_{\boldsymbol\Gamma}^{m+1}$, and they satisfy
\begin{equation}\nonumber
\aiminnorm{R_{t,1}(s)} \leq e^{C(M_1)s},\hspace{6pt}
\aiminnorm{R_t(s)} \leq e^{C(M)s},\hspace{6pt}
\aiminnorm{R_{t,2}(s)} \leq e^{C(M_2)s},
\end{equation}
for all $s\geq 0$, where $M_1$ is the norm of $\widehat U$ in $\mathcal{C}([0,T];H^{m-1}(\Omega))$, $M$ is the norm of $\widehat U$ in $\mathcal{C}([0,T];H^m(\Omega))$, and $M_2$ is the norm of $\widehat U$ in $\mathcal{C}([0,T];H^{m+1}(\Omega))$.

Furthermore, with Remark \ref{rmk2.1}, it is clear that the family $\{-A(t)\}_{t\in [0,T]}$ is \emph{Kato-stable} (see Definition \ref{def-kato-stability}) in all these three spaces $\mathcal{H}_{\boldsymbol\Gamma}^{m-1}, \mathcal{H}_{\boldsymbol\Gamma}^m$ and $\mathcal{H}_{\boldsymbol\Gamma}^{m+1}$.
\end{cor}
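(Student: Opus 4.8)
The plan is to freeze the time variable $t$ and reduce everything to the time-independent result, Theorem~\ref{thm5.1}. First I would note that, by assumption~\ref{eq5.e1}, for each fixed $t\in[0,T]$ the function $\widehat U(t)$ satisfies the positive $(m+2)$-condition, hence also the positive $(m+1)$- and $m$-conditions by the monotonicity remark following \eqref{asp3.2}; together with the supercritical condition~\ref{eq5.e3}, this means the hypotheses \eqref{asp5.0}--\eqref{asp5.1} of Theorem~\ref{thm5.1} hold for the time-independent operator $\mathcal{A}^0(\widehat U(t))$ at each of the regularity levels $m-1$, $m$ and $m+1$ (recall that to work on $\mathcal{H}_{\boldsymbol\Gamma}^k$ one needs $\widehat U$ to satisfy the positive $(k+1)$-condition, because of the extra derivative imposed on $\mathcal{P}$). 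Applying Theorem~\ref{thm5.1} three times, once at each level with $t$ held as a parameter (and using Remark~\ref{rmk5.0} when $m-1=2$), yields that $-A(t)$ generates a quasi-contraction semigroup $(R_{t,1}(s))_{s\geq 0}$ on $\mathcal{H}_{\boldsymbol\Gamma}^{m-1}$, $(R_t(s))_{s\geq 0}$ on $\mathcal{H}_{\boldsymbol\Gamma}^m$, and $(R_{t,2}(s))_{s\geq 0}$ on $\mathcal{H}_{\boldsymbol\Gamma}^{m+1}$.

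Next I would make the growth exponents uniform in $t$. By Remark~\ref{rmk5.1} the exponent produced by Theorem~\ref{thm5.1} at level $k$ depends only on $\aiminnorm{\widehat U(t)}_{H^k(\Omega)}$. Assumption~\ref{eq5.e2}, namely $\widehat U\in\mathcal{C}([0,T];H^{m+1}(\Omega))$, guarantees that $\sup_{t\in[0,T]}\aiminnorm{\widehat U(t)}_{H^k(\Omega)}$ is finite for every $k\leq m+1$; denoting these suprema by $M_1$, $M$, $M_2$ for $k=m-1,m,m+1$, the exponents may be taken as $C(M_1)$, $C(M)$, $C(M_2)$ respectively, independently of $t$. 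This produces the stated bounds $\aiminnorm{R_{t,1}(s)}\leq e^{C(M_1)s}$, $\aiminnorm{R_t(s)}\leq e^{C(M)s}$, $\aiminnorm{R_{t,2}(s)}\leq e^{C(M_2)s}$ for all $s\geq 0$ and all $t\in[0,T]$.

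Finally, for the Kato-stability statement: in each of the three spaces every member of the family $\{-A(t)\}_{t\in[0,T]}$ generates a quasi-contraction semigroup with one and the same exponential bound (for instance $e^{C(M)s}$ on $\mathcal{H}_{\boldsymbol\Gamma}^m$), so for any finite sequence $0\leq t_1,\dots,t_j\leq T$ and any $s_1,\dots,s_j\geq 0$ the composition $R_{t_j}(s_j)\cdots R_{t_1}(s_1)$ has operator norm at most $e^{C(M)(s_1+\cdots+s_j)}$, which is precisely the inequality of Definition~\ref{def-kato-stability} with stability constant $1$; this is exactly what Remark~\ref{rmk2.1} records, and the identical argument applies verbatim on $\mathcal{H}_{\boldsymbol\Gamma}^{m-1}$ and $\mathcal{H}_{\boldsymbol\Gamma}^{m+1}$. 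The only step that demands any real care is the uniformity in $t$ of the semigroup bounds in the second paragraph, which rests entirely on the time-continuity hypothesis~\ref{eq5.e2}; beyond that the corollary is essentially bookkeeping over the three regularity levels together with an appeal to Remark~\ref{rmk2.1}, and no new estimates are needed.
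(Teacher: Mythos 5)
Your proof is correct and follows the same route the paper takes: freeze $t$ and apply Theorem~\ref{thm5.1} at the three regularity levels $m-1$, $m$, $m+1$ (using that the positive $(m+2)$-condition implies the lower ones), then invoke Remark~\ref{rmk2.1} for Kato-stability. The paper dismisses this as an ``immediate consequence'' of Theorem~\ref{thm5.1}; you supply the bookkeeping the paper leaves implicit, including the uniformity in $t$ via assumption~\eqref{eq5.e2} and Remark~\ref{rmk5.1}, and the appeal to Remark~\ref{rmk5.0} in the borderline case $m-1=2$.
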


By the definition of these spaces, the two embeddings
$$\mathcal{H}_{\boldsymbol\Gamma}^{m+1}\hookrightarrow \mathcal{H}_{\boldsymbol\Gamma}^m\hookrightarrow \mathcal{H}_{\boldsymbol\Gamma}^{m-1}$$
are dense and continuous.
Using Theorem \ref{thm2.2}, we obtain that $\mathcal{H}_{\boldsymbol\Gamma}^m$ (resp. $\mathcal{H}_{\boldsymbol\Gamma}^{m+1}$) is $-A(t)$ admissible for all $t\in[0,T]$ with respect to $\mathcal{H}_{\boldsymbol\Gamma}^{m-1}$ (resp. $\mathcal{H}_{\boldsymbol\Gamma}^m$). That $\mathcal{H}_{\boldsymbol\Gamma}^m\subset\mathcal{D}(A(t)|_{\mathcal{H}_{\boldsymbol\Gamma}^{m-1}})$ and $\mathcal{H}_{\boldsymbol\Gamma}^{m+1}\subset\mathcal{D}(A(t)|_{\mathcal{H}_{\boldsymbol\Gamma}^m})$ holds for all $t\in[0,T]$ is clear from the definition, and finally, that the mapping $t\mapsto -A(t)$ is continous in the $\mathcal{L}(\mathcal{H}_{\boldsymbol\Gamma}^m,\mathcal{H}_{\boldsymbol\Gamma}^{m-1})$-norm or $\mathcal{L}(\mathcal{H}_{\boldsymbol\Gamma}^{m+1},\mathcal{H}_{\boldsymbol\Gamma}^m)$-norm follows from the second assumption on $\widehat U$ (see (\theequation\ref{eq5.e2}) above). In conclusion, we find the following result.
\begin{lemma}\label{lem5.3}
Assume that $\widehat U$ satisfies the assumptions (\theequation\ref{eq5.e1})-(\theequation\ref{eq5.e3}) and $m\geq 3$.
Then the family $\{-A(t)\}_{t\in [0,T]}$ satisfies the \emph{Kato-condition} (see Definition \ref{def-kato-stability}) with $X=\mathcal{H}_{\boldsymbol\Gamma}^{m-1}$, $Y=\mathcal{H}_{\boldsymbol\Gamma}^m$ or $X=\mathcal{H}_{\boldsymbol\Gamma}^m$, $Y=\mathcal{H}_{\boldsymbol\Gamma}^{m+1}$.
\end{lemma}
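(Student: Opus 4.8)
The plan is to check, item by item, the hypotheses making up the Kato-condition of Definition \ref{def-kato-stability} for the pair $(X,Y)$, in each of the two configurations $X=\mathcal{H}_{\boldsymbol\Gamma}^{m-1}$, $Y=\mathcal{H}_{\boldsymbol\Gamma}^{m}$ and $X=\mathcal{H}_{\boldsymbol\Gamma}^{m}$, $Y=\mathcal{H}_{\boldsymbol\Gamma}^{m+1}$; the two being formally identical after shifting $m$ by one, I would carry out the details only for $X=\mathcal{H}_{\boldsymbol\Gamma}^{m-1}$, $Y=\mathcal{H}_{\boldsymbol\Gamma}^{m}$ and merely indicate the obvious changes for the other. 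Concretely there are three things to verify: (i) the family $\{-A(t)\}_{t\in[0,T]}$ is Kato-stable on $X$ (and on $Y$); (ii) $Y$ is $-A(t)$-admissible in $X$ for every $t\in[0,T]$, i.e. $Y$ is a dense subspace of $X$, invariant under each semigroup $(R_{t,1}(s))_{s\ge 0}$, on which the restricted semigroups remain strongly continuous; and (iii) $Y\subset\mathcal{D}(A(t)|_X)$ for every $t$, $A(t)\in\mathcal{L}(Y,X)$, and $t\mapsto A(t)$ is continuous in the $\mathcal{L}(Y,X)$-norm.

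Steps (i) and (ii) are bookkeeping on top of what has already been proved. For (i): the Corollary preceding the statement records that each $-A(t)$ generates a quasi-contraction semigroup on $\mathcal{H}_{\boldsymbol\Gamma}^{m-1}$, $\mathcal{H}_{\boldsymbol\Gamma}^{m}$ and $\mathcal{H}_{\boldsymbol\Gamma}^{m+1}$, with exponential bounds $e^{C(M_1)s}$, $e^{C(M)s}$, $e^{C(M_2)s}$ uniform in $t$; by Remark \ref{rmk2.1} a uniformly bounded family of quasi-contraction generators is Kato-stable, which gives (i) on both $X$ and $Y$. For (ii): the inclusions $\mathcal{H}_{\boldsymbol\Gamma}^{m+1}\hookrightarrow\mathcal{H}_{\boldsymbol\Gamma}^{m}\hookrightarrow\mathcal{H}_{\boldsymbol\Gamma}^{m-1}$ are dense and continuous, and $-A(t)$ generates a strongly continuous semigroup both on the large space $\mathcal{H}_{\boldsymbol\Gamma}^{m-1}$ and on the small space $\mathcal{H}_{\boldsymbol\Gamma}^{m}$ (again by the Corollary), so Theorem \ref{thm2.2} applies directly and gives that $\mathcal{H}_{\boldsymbol\Gamma}^{m}$ is $-A(t)$-admissible with respect to $\mathcal{H}_{\boldsymbol\Gamma}^{m-1}$; the same with $m$ replaced by $m+1$ handles the second configuration.

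The only step requiring genuine estimates is (iii), and this is where the strengthened hypotheses on $\widehat U$ are used. The inclusion $\mathcal{H}_{\boldsymbol\Gamma}^{m}\subset\mathcal{D}(A(t)|_{\mathcal{H}_{\boldsymbol\Gamma}^{m-1}})$ and the bound $A(t)\in\mathcal{L}(\mathcal{H}_{\boldsymbol\Gamma}^{m},\mathcal{H}_{\boldsymbol\Gamma}^{m-1})$ are read off from the definition of the domain: for $U\in\mathcal{H}_{\boldsymbol\Gamma}^{m}$ one has $U_x,U_y\in H^{m-1}(\Omega)^3$, while $\mathcal{E}_1^0(\widehat U(t)),\mathcal{E}_2^0(\widehat U(t))\in H^{m+1}(\Omega)$ by the positive $(m+2)$-condition on $\widehat U$ together with Lemma \ref{lemb.1} $i)$ applied to the explicit algebraic formulae for these matrices (the bound $\hat\phi\ge c_0$ from \eqref{asp3.1} making $\sqrt{g\hat\phi}$ harmless), so Lemma \ref{lemb.1} $i)$ gives $\mathcal{A}^0(\widehat U(t))U=\mathcal{E}_1^0 U_x+\mathcal{E}_2^0 U_y\in H^{m-1}(\Omega)^3$, and this product inherits the vanishing traces of order $\le m-2$ from $U$, hence lies in $\mathcal{H}_{\boldsymbol\Gamma}^{m-1}$. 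For the norm-continuity I would write $A(t)U-A(s)U=\big(\mathcal{E}_1^0(\widehat U(t))-\mathcal{E}_1^0(\widehat U(s))\big)U_x+\big(\mathcal{E}_2^0(\widehat U(t))-\mathcal{E}_2^0(\widehat U(s))\big)U_y$; since $t\mapsto\widehat U(t)$ is continuous into $H^{m+1}(\Omega)$ and $\widehat U\mapsto\mathcal{E}_i^0(\widehat U)$ is a fixed smooth algebraic map on the region $\{\hat\phi\ge c_0\}$, the Sobolev product and composition estimates (Lemma \ref{lemb.1} and the classical lemmas of the appendix, exactly as in the time-independent case above) yield that $t\mapsto\mathcal{E}_i^0(\widehat U(t))$ is continuous into $H^{m+1}(\Omega)$; multiplication by such a matrix followed by one differentiation is bounded from $H^{m}$ into $H^{m-1}$, so $t\mapsto A(t)$ is continuous in the $\mathcal{L}(\mathcal{H}_{\boldsymbol\Gamma}^{m},\mathcal{H}_{\boldsymbol\Gamma}^{m-1})$-norm, and the same computation, now bounded from $H^{m+1}$ into $H^{m}$, gives continuity in the $\mathcal{L}(\mathcal{H}_{\boldsymbol\Gamma}^{m+1},\mathcal{H}_{\boldsymbol\Gamma}^{m})$-norm. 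Assembling (i)--(iii) in the two configurations finishes the proof.

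The main obstacle is, honestly, not in Lemma \ref{lem5.3} itself: all the genuine analysis --- the energy estimate \eqref{eq5.12}, the solvability of $\omega+A$ via the one-dimensional hyperbolic theory and the uniform Lopatinskii condition, and the semigroup generation of Theorem \ref{thm5.1} --- is already in place, and the lemma only reorganizes those facts against the definition of the Kato-condition. The delicate point, and the reason for the extra levels of regularity imposed on $\widehat U$, is the matching of scales in step (iii): the ``one derivative loss'' carried by $A(t)$ must be absorbed by the gap between the three Sobolev norms, which forces $\widehat U(t)\in H^{m+2}(\Omega)$ (so that the coefficients lie in $H^{m+1}$ and $A(t)$ really maps $\mathcal{H}_{\boldsymbol\Gamma}^{m+1}$ into $\mathcal{H}_{\boldsymbol\Gamma}^{m}$) and $\widehat U\in\mathcal{C}([0,T];H^{m+1}(\Omega))$ (so that $t\mapsto A(t)$ is norm-continuous and not merely strongly continuous).
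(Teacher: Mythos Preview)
Your proposal is correct and follows essentially the same route as the paper: the paper's argument (spread across the Corollary and the paragraph immediately preceding Lemma~\ref{lem5.3}) likewise obtains Kato-stability from Remark~\ref{rmk2.1} applied to the uniform quasi-contraction bounds, admissibility from Theorem~\ref{thm2.2} together with the dense continuous inclusions $\mathcal{H}_{\boldsymbol\Gamma}^{m+1}\hookrightarrow\mathcal{H}_{\boldsymbol\Gamma}^{m}\hookrightarrow\mathcal{H}_{\boldsymbol\Gamma}^{m-1}$, and the norm-continuity of $t\mapsto A(t)$ from the hypothesis $\widehat U\in\mathcal{C}([0,T];H^{m+1}(\Omega))$. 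Your write-up of step~(iii) is in fact more detailed than the paper's (which simply says the inclusion is ``clear from the definition'' and the continuity ``follows from the second assumption on $\widehat U$''), but the underlying idea is identical.
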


\section{The linear shallow water system}\label{sec-linear}
In this section, we aim to study the well-posedness of the linear shallow water system in certain Sobolev spaces using the evolution semigroups technique. Keeping the notations introduced in Section \ref{sec-swe-operator}, the linear shallow water system reads in compact form
\begin{equation}\label{eq6.2}
U_t + \mathcal{E}_1(\widehat U) U_x + \mathcal{E}_2(\widehat U) U_y + \ell (U) = F,
\end{equation}
where $\ell (U)=(-fv,fu,0)^t$, and $f$ is the Coriolis parameter.
Note that $F$ which does not appear in the linearized shallow water system \eqref{eq1.1} is added here for mathematical generality and also for the study of the non-homogeneous boundary conditions or for the nonlinear case. Observe that the system \eqref{eq6.2} is Friedrichs symmetrizable (see Chapter 1 in \cite{BS07}) with symmetrizer $S_0=\text{diag}(1,1,g/\hat\phi)$, and in order to take advantage of that, we make as before a change of variables by setting $\widetilde U=S_0^{\frac{1}{2}}U$ and substitute into \eqref{eq6.2}; we obtain a new system for $\widetilde U$ which reads 
\begin{equation}\label{eq6.3}
\widetilde U_t + \mathcal{A}^0(\widehat U) \widetilde U + \mathcal{B}(\widehat U) \widetilde U = S_0^{\frac{1}{2}}F,
\end{equation}
where 
\begin{equation}
\begin{split}
\mathcal{A}^0(\widehat U) \widetilde U & = \mathcal{E}_1^0(\widehat U) \widetilde U_x + \mathcal{E}_2^0(\widehat U) \widetilde U_y, \\
\mathcal{B}(\widehat U) \widetilde U &=  S_0^{\frac{1}{2}}\big((S_0^{-\frac{1}{2}})_t + \mathcal{E}_1(\widehat U)(S_0^{-\frac{1}{2}})_x  +  \mathcal{E}_2(\widehat U)(S_{0}^{-\frac{1}{2}})_y \big) \widetilde U + \ell (\widetilde U).
\end{split}
\end{equation}

If we now assume that $\widehat U$ satisfies the conditions introduced in Subsection \ref{subsec-tSWE-operator}, then the family of operators $\{-A(t)\}_{t\in [0,T]}$ satisfies the \emph{Kato-condition} (see Lemma \ref{lem5.3}). If we further assume that $\widehat U$ belongs to $\mathcal{C}([0,T];H^{m+2}(\Omega))\cap\mathcal{C}^1([0,T];H^{m+1}(\Omega))$, then the operators $\{B(t)\}_{t\in [0,T]}$ defined by $B(t)\widetilde U=\mathcal{B}(\widehat U) \widetilde U$ are bounded operators on all the three spaces $\mathcal{H}_{\boldsymbol\Gamma}^{m-1},\mathcal{H}_{\boldsymbol\Gamma}^m,\mathcal{H}_{\boldsymbol\Gamma}^{m+1}$ by using the estimates in Lemma \ref{lemb.1}. Therefore, with Theorem \ref{thm2.5} (Bounded Perturbation Theorem II), the family of operators $\{-A(t)-B(t)\}_{t\in [0,T]}$ is a \emph{Kato-stable} family, and furthermore, we have
\begin{lemma}\label{lem6.1}
The family $\{-A(t)-B(t)\}_{t\in [0,T]}$ satisfies the \emph{Kato-condition} with $X=\mathcal{H}_{\boldsymbol\Gamma}^{m-1}$, $Y=\mathcal{H}_{\boldsymbol\Gamma}^m$ or $X=\mathcal{H}_{\boldsymbol\Gamma}^m$, $Y=\mathcal{H}_{\boldsymbol\Gamma}^{m+1}$.
\end{lemma}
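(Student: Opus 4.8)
The plan is to derive the Kato-condition for $\{-A(t)-B(t)\}_{t\in[0,T]}$ from the one already established for $\{-A(t)\}$ alone (Lemma \ref{lem5.3}), together with the fact, recorded just above, that each $B(t)$ is a bounded zeroth order operator on $\mathcal{H}_{\boldsymbol\Gamma}^{m-1}$, $\mathcal{H}_{\boldsymbol\Gamma}^m$ and $\mathcal{H}_{\boldsymbol\Gamma}^{m+1}$. I would treat the pair $(X,Y)=(\mathcal{H}_{\boldsymbol\Gamma}^{m-1},\mathcal{H}_{\boldsymbol\Gamma}^m)$ in detail; the pair $(X,Y)=(\mathcal{H}_{\boldsymbol\Gamma}^m,\mathcal{H}_{\boldsymbol\Gamma}^{m+1})$ is handled verbatim. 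Recalling Definition \ref{def-kato-stability}, three things are needed: Kato-stability of $\{-A(t)-B(t)\}$ in $X$; $(-A(t)-B(t))$-admissibility of $Y$ with respect to $X$, with the restricted family Kato-stable in $Y$; and $Y\subset\mathcal{D}(A(t)+B(t)|_X)$ for every $t$, with $t\mapsto-(A(t)+B(t))$ continuous in the $\mathcal{L}(Y,X)$ norm.

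For the first two I would invoke Theorem \ref{thm2.5}: $\{-A(t)\}$ is Kato-stable in $X$ and $Y$ is $-A(t)$-admissible with respect to $X$ (Lemma \ref{lem5.3}), while $\{B(t)\}$ is uniformly bounded on $X$ and on $Y$; hence $\{-A(t)-B(t)\}$ is Kato-stable in $X$ and $Y$ stays $(-A(t)-B(t))$-admissible. The concrete mechanism is that the Dyson--Phillips series building the evolution generated by $-A(t)-B(t)$ on $X$ also converges, term by term, in $\mathcal{L}(Y)$ because $B(t)$ is bounded on $Y$, so $Y$ is invariant and the restriction keeps uniform quasi-contraction bounds; this is essentially the content of the Bounded Perturbation Theorem II in the Appendix. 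For the domain condition, since $B(t)\in\mathcal{L}(\mathcal{H}_{\boldsymbol\Gamma}^k)$ for $k=m-1,m,m+1$, the graph norm of $A(t)+B(t)$ on $X$ is equivalent to that of $A(t)$, so $\mathcal{D}(A(t)+B(t)|_X)=\mathcal{D}(A(t)|_X)\supset Y$ by Lemma \ref{lem5.3}.

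The step with some substance --- and the one I expect to be the main obstacle --- is the $\mathcal{L}(Y,X)$-continuity of $t\mapsto-(A(t)+B(t))$. By the triangle inequality and the continuity of $t\mapsto-A(t)$ in $\mathcal{L}(Y,X)$ (Lemma \ref{lem5.3}), this reduces to showing that $t\mapsto B(t)$ is continuous in $\mathcal{L}(Y,X)$, equivalently (since $Y\hookrightarrow X$) in $\mathcal{L}(Y,Y)$. Reading off $\mathcal{B}(\widehat U)$ from the expansion following \eqref{eq6.3}, $B(t)$ is multiplication by a matrix $\mathcal{M}(t)$ assembled from the entries of $\widehat U(t)$, $\nabla\widehat U(t)$ and $\partial_t\widehat\phi(t)$ through products, sums, and divisions by quantities that the positivity \eqref{asp3.1} and supercriticality \eqref{asp5.1} conditions keep bounded away from $0$. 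Using the Sobolev product and composition estimates of Lemma \ref{lemb.1}, the assumption $\widehat U\in\mathcal{C}([0,T];H^{m+2}(\Omega))\cap\mathcal{C}^1([0,T];H^{m+1}(\Omega))$ yields $\mathcal{M}\in\mathcal{C}([0,T];H^{m+1}(\Omega))$, whence $\|B(t)-B(s)\|_{\mathcal{L}(\mathcal{H}_{\boldsymbol\Gamma}^k)}\le C\|\mathcal{M}(t)-\mathcal{M}(s)\|_{H^{m+1}(\Omega)}\to0$ as $s\to t$ for every $k\le m+1$.

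This is exactly where the extra regularity on $\widehat U$ is consumed: the spatial $H^{m+2}$ so that $\mathcal{M}$ lands in $H^{m+1}$ and $B(t)$ acts boundedly on $\mathcal{H}_{\boldsymbol\Gamma}^{m+1}$, and the $\mathcal{C}^1$-in-time so that $\partial_t\widehat\phi$, hence $\mathcal{M}$, is continuous with values in $H^{m+1}$. Once the three conditions above are in hand, Definition \ref{def-kato-stability} is satisfied for $\{-A(t)-B(t)\}$ with $X=\mathcal{H}_{\boldsymbol\Gamma}^{m-1}$, $Y=\mathcal{H}_{\boldsymbol\Gamma}^m$ and, identically, with $X=\mathcal{H}_{\boldsymbol\Gamma}^m$, $Y=\mathcal{H}_{\boldsymbol\Gamma}^{m+1}$, which is the assertion of the lemma.
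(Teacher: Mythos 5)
Your proof is correct and follows the same route as the paper: derive the Kato-condition for $\{-A(t)-B(t)\}$ from Lemma \ref{lem5.3} together with the boundedness of $B(t)$ on all three spaces $\mathcal{H}_{\boldsymbol\Gamma}^{m-1},\mathcal{H}_{\boldsymbol\Gamma}^m,\mathcal{H}_{\boldsymbol\Gamma}^{m+1}$ (via Lemma \ref{lemb.1}) and Theorem \ref{thm2.5}. The paper states Lemma \ref{lem6.1} with essentially no further argument beyond citing the Bounded Perturbation Theorem II; your write-up simply supplies the details it leaves implicit --- in particular the $\mathcal{L}(Y,X)$-continuity of $t\mapsto B(t)$ and the graph-norm equivalence giving $\mathcal{D}(A(t)+B(t)|_X)=\mathcal{D}(A(t)|_X)$ --- and these are all correct (the word ``equivalently'' before ``$\mathcal{L}(Y,Y)$'' slightly overstates, since $\mathcal{L}(Y,Y)$-continuity is strictly stronger, but you in fact prove the stronger statement, so nothing is lost).
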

Combining Theorem \ref{thm2.6} and Lemma \ref{lem6.1}, we obtain an evolution family on $\mathcal{H}_{\boldsymbol\Gamma}^{m-1}$ and another evolution family on $\mathcal{H}_{\boldsymbol\Gamma}^m$. From the uniqueness in Theorem \ref{thm2.6}, we see that these two evolution families coincide on $\mathcal{H}_{\boldsymbol\Gamma}^m$.
Then this unique evolution family satisfies $(E_1)-(E_3)$ with $X=\mathcal{H}_{\boldsymbol\Gamma}^{m-1}$ and satisfies $(E_4)-(E_5)$ with $Y=\mathcal{H}_{\boldsymbol\Gamma}^m$ (see Theorem \ref{thm2.6} and \ref{thm2.7}). Using Theorem \ref{thm2.7}, we obtain that the following system
\begin{equation}
\begin{cases}
\frac{d\widetilde U(t)}{dt} =-(A(t)+B(t))\widetilde U(t) + S_0^{\frac{1}{2}}F(t),\\
\widetilde U(0)=\widetilde U_0
\end{cases}
\end{equation}
admits a unique solution $\widetilde U=\widetilde U(t)\in\mathcal{C}([0,T];\mathcal{H}_{\boldsymbol\Gamma}^m)$ if $\widetilde U_0\in\mathcal{H}_{\boldsymbol\Gamma}^m$ and $F=F(t)\in \mathcal{C}([0,T];\mathcal{H}_{\boldsymbol\Gamma}^m)$. Transforming back to the original variables, we obtain the following:
\begin{thm}\label{thm6.1}
Let there be given $U_0\in\mathcal{H}_{\boldsymbol\Gamma}^m=\mathcal{H}_{\boldsymbol\Gamma}^m(\Omega)^3$ and $F=F(t)\in \mathcal{C}([0,T];\mathcal{H}_{\boldsymbol\Gamma}^m)$. We also assume that the $\widehat U(t)$ are given for all $t\in[0,T]$ such that
\begin{enumerate}[$(1)$]
\item $\widehat U(t)$ satisfies the positive $m+2$-condition for all $t\in[0,T]$, i.e. $\widehat U(t)$ belongs to $H^{m+2}(\Omega)$ ($m\geq 3$) and it satisfies the condition \eqref{asp3.1} with $c_0,c_1$ independent of $t\in[0,T]$,
\item $\widehat U$ belongs to $\mathcal{C}([0,T];H^{m+2}(\Omega))\cap\mathcal{C}^1([0,T];H^{m+1}(\Omega))$,
\item $\widehat U$ satisfies the supercritical condition \eqref{asp5.1} with $c_2$ independent of $t\in[0,T]$.
\end{enumerate}
Then the system \eqref{eq6.2} associated with the initial condition
$U(0)=U_0$ has a unique solution $U=U(t)$ which belongs to $\mathcal{C}([0,T];\mathcal{H}_{\boldsymbol\Gamma}^m)\cap\mathcal{C}^1([0,T];\mathcal{H}_{\boldsymbol\Gamma}^{m-1})$.
\end{thm}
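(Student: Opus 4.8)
The plan is to deduce the theorem from the abstract evolution-equation machinery that has just been set up, namely Lemma 4.1 (the Kato-condition for $\{-A(t)-B(t)\}$) together with Theorems 2.6–2.7 on evolution families. First I would verify that all hypotheses of Lemma 4.1 are met under the assumptions $(1)$–$(3)$: assumption $(1)$ gives that $\widehat U(t)$ satisfies the positive $(m+2)$-condition, which (as noted before Lemma 4.1) is exactly the regularity needed for $\{-A(t)\}$ to satisfy the Kato-condition on the triple $\mathcal{H}_{\boldsymbol\Gamma}^{m-1}\hookrightarrow\mathcal{H}_{\boldsymbol\Gamma}^m\hookrightarrow\mathcal{H}_{\boldsymbol\Gamma}^{m+1}$; assumption $(2)$, $\widehat U\in\mathcal{C}([0,T];H^{m+2})\cap\mathcal{C}^1([0,T];H^{m+1})$, guarantees both the continuity of $t\mapsto -A(t)$ in the relevant operator norms and — crucially — that the multiplication operators making up $\mathcal{B}(\widehat U)$, which involve $(S_0^{-1/2})_t,(S_0^{-1/2})_x,(S_0^{-1/2})_y$ and hence one time-derivative and one space-derivative of $\widehat U$, are bounded on each of the three spaces (via the product/commutator estimates of Lemma B.1); assumption $(3)$ is the supercritical condition $\eqref{asp5.1}$ ensuring $\mathcal{E}_1^0,\mathcal{E}_2^0$ stay positive definite and $\mathcal{P},\mathcal{P}^{-1},\kappa$ are well-defined and regular. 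Thus Lemma 4.1 applies and $\{-A(t)-B(t)\}$ satisfies the Kato-condition with $(X,Y)=(\mathcal{H}_{\boldsymbol\Gamma}^{m-1},\mathcal{H}_{\boldsymbol\Gamma}^m)$.

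Next I would invoke Theorem 2.6 to produce the evolution family $\{\mathcal{U}(t,s)\}$ on $\mathcal{H}_{\boldsymbol\Gamma}^{m-1}$ associated with $\{-A(t)-B(t)\}$, and use the uniqueness part of Theorem 2.6 (applied also with $(X,Y)=(\mathcal{H}_{\boldsymbol\Gamma}^m,\mathcal{H}_{\boldsymbol\Gamma}^{m+1})$) to see that this family restricts to a genuine evolution family on $\mathcal{H}_{\boldsymbol\Gamma}^m$ satisfying properties $(E_1)$–$(E_3)$ on $X=\mathcal{H}_{\boldsymbol\Gamma}^{m-1}$ and $(E_4)$–$(E_5)$ on $Y=\mathcal{H}_{\boldsymbol\Gamma}^m$. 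Then Theorem 2.7, applied with data $\widetilde U_0=S_0^{1/2}U_0\in\mathcal{H}_{\boldsymbol\Gamma}^m$ and forcing $S_0^{1/2}F\in\mathcal{C}([0,T];\mathcal{H}_{\boldsymbol\Gamma}^m)$ — here one checks that $S_0^{1/2}=\mathrm{diag}(1,1,(g/\hat\phi)^{1/2})$ preserves $\mathcal{H}_{\boldsymbol\Gamma}^m$ since $\hat\phi$ is bounded away from $0$ and in $H^{m+2}$, again by Lemma B.1 — yields a unique mild/strong solution $\widetilde U\in\mathcal{C}([0,T];\mathcal{H}_{\boldsymbol\Gamma}^m)\cap\mathcal{C}^1([0,T];\mathcal{H}_{\boldsymbol\Gamma}^{m-1})$ of the transformed Cauchy problem $\widetilde U_t=-(A(t)+B(t))\widetilde U+S_0^{1/2}F$, $\widetilde U(0)=\widetilde U_0$.

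Finally I would transform back: set $U=S_0^{-1/2}\widetilde U$. Since $S_0^{-1/2}$ is a time-independent (it depends only on $\widehat\phi$, not on $t$... actually $\widehat U$ depends on $t$, so one records that $S_0^{\pm1/2}$ are $\mathcal{C}^1$ in $t$ with values in bounded multipliers on $\mathcal{H}_{\boldsymbol\Gamma}^{m-1}$ and $\mathcal{H}_{\boldsymbol\Gamma}^m$) bounded isomorphism of each space, $U$ inherits the regularity $\mathcal{C}([0,T];\mathcal{H}_{\boldsymbol\Gamma}^m)\cap\mathcal{C}^1([0,T];\mathcal{H}_{\boldsymbol\Gamma}^{m-1})$, and a direct computation — reversing the substitution that led from $\eqref{eq6.2}$ to $\eqref{eq6.3}$ — shows $U$ solves $\eqref{eq6.2}$ with $U(0)=U_0$; uniqueness for $\eqref{eq6.2}$ follows from uniqueness for the $\widetilde U$-problem. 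The main obstacle I anticipate is not any single estimate but the bookkeeping needed to confirm the boundedness of $\{B(t)\}$ and of $S_0^{\pm1/2}$ as multipliers simultaneously on all three scales $\mathcal{H}_{\boldsymbol\Gamma}^{m-1},\mathcal{H}_{\boldsymbol\Gamma}^m,\mathcal{H}_{\boldsymbol\Gamma}^{m+1}$ — this is where the extra derivative in assumption $(1)$ (the $(m+2)$-condition rather than $(m+1)$) and the $\mathcal{C}^1$-in-time hypothesis $(2)$ are genuinely used, and where one must be careful that differentiating $S_0^{-1/2}$ costs exactly one derivative of $\widehat U$ so that the worst term $\mathcal{E}_i(\widehat U)(S_0^{-1/2})_{x_i}$ still lands in a bounded multiplier class on $H^{m+1}$.
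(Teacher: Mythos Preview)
Your proposal is correct and follows essentially the same route as the paper: verify Lemma~\ref{lem6.1} from hypotheses $(1)$--$(3)$, apply Theorem~\ref{thm2.6} twice (with $(X,Y)=(\mathcal{H}_{\boldsymbol\Gamma}^{m-1},\mathcal{H}_{\boldsymbol\Gamma}^m)$ and $(\mathcal{H}_{\boldsymbol\Gamma}^m,\mathcal{H}_{\boldsymbol\Gamma}^{m+1})$) and identify the two evolution families by uniqueness so that $(E_1)$--$(E_5)$ hold with $X=\mathcal{H}_{\boldsymbol\Gamma}^{m-1}$, $Y=\mathcal{H}_{\boldsymbol\Gamma}^m$, then invoke Theorem~\ref{thm2.7} for the $\widetilde U$-problem and transform back via $U=S_0^{-1/2}\widetilde U$. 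Your parenthetical self-correction that $S_0^{\pm 1/2}$ is time-dependent (through $\hat\phi$) is right and is precisely why the $\mathcal{C}^1([0,T];H^{m+1})$ hypothesis on $\widehat U$ is needed to keep $B(t)$ bounded and to preserve the $\mathcal{C}^1$-in-$t$ regularity of $U$.
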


\begin{rmk}\label{rmk6.0}
Using the system \eqref{eq6.2}, the solution $U$ in Theorem \ref{thm6.1} satisfies the compatibility boundary conditions \eqref{eq5.3} by the same argument as in Lemma \ref{lem5.2}.
\end{rmk}

\begin{rmk}\label{rmk6.1}
Notice that if $U_0\in\mathcal{C}^\infty(\overline\Omega)$\,\footnote[3]{For the domain $\Omega=(0,L_1)\times(0,L_2)$, we have that $\mathcal{C}^\infty(\overline\Omega)=\cap_{k=0}^\infty H^k(\Omega)$, see \cite[Chapter 1]{Gri85}.}, and $\widehat U, F\in\mathcal{C}^\infty([0,T];\mathcal{C}^\infty(\overline\Omega))$, then the solution $U$ provided by Theorem \ref{thm6.1} belongs to $\mathcal{C}([0,T];H^k(\Omega)^3)$ for all $k\geq 0$, which implies that $U$ belongs to $\mathcal{C}([0,T];\mathcal{C}^\infty(\overline\Omega))$, and then by using the system \eqref{eq6.2}, we conclude by induction that $U$ actually belongs to $\mathcal{C}^\infty([0,T];\mathcal{C}^\infty(\overline\Omega))$.
\end{rmk}

We lost two space derivatives from $\widehat U$ to the solution $U$ for the linear system \eqref{eq6.2} in Theorem \ref{thm6.1}, which is not sufficient for us to study the nonlinear case. In order to gain these two derivatives back, we need some additional a priori estimates. 

\subsection{A priori estimates}\label{sub-sec-priori}
With Remark \ref{rmk6.1}, we assume that $\widehat U, U_0, F,U$ are smooth functions  satisfying the following system 
\begin{equation}\label{eq6.4}
\begin{cases}
U_t + \mathcal{E}_1(\widehat U) U_x + \mathcal{E}_2(\widehat U) U_y + \ell(U) = F,\\
U(0)=U_0,\\
U|_{\boldsymbol\Gamma} = 0,
\end{cases}
\end{equation}
with $U_0, U(t), F(t)$ satisfying the compatibility boundary conditions \eqref{eq5.3} (i.e. \eqref{eq5.3prime}) for all $t\in[0,T]$, and ${\boldsymbol\Gamma}=\{x=0\}\cup\{y=0\}$.
In addition, $\widehat U$ is positive away from $0$ and satisfies the supercritical condition \eqref{asp5.1}, i.e.
\begin{equation}\label{asp6.1}
\begin{cases}
c_0 \leq \widehat U \leq c_1, \\
\hat u^2 - g\hat\phi\geq c_2^2,\, \hat v^2 - g\hat\phi \geq c_2^2,
\end{cases}
\end{equation}
where $c_0,c_1,c_2$ are positive constants.
We will first derive $L^2$ a priori estimates for the linear system \eqref{eq6.4} and then extend the $L^2$-estimates to $H^m$-estimates with $m\geq 3$. 
For the sake of simplicity, we write $\Omega_T=\Omega\times[0,T]$, and $L^\infty(H^k)=L^\infty(0,T;H^k(\Omega))$ for all $k=1,\cdots,m$ and $L^\infty(L^2)=L^\infty(0,T;L^2(\Omega))$. We assume that
\begin{equation}\label{asp6.2}
\aiminnorm{ \widehat U }_{L^\infty(H^m)} \leq M,\hspace{6pt}\aiminnorm{ \widehat U_t }_{L^\infty(H^{m-1})} \leq M.
\end{equation}

Multiplying \eqref{eq6.4}$_1$ by $S_0$ and taking the scalar product in $L^2(\Omega)$ with $U$ gives
\begin{equation}\label{eq6.5}
\aimininner{S_0U_t}{U} + \aimininner{S_0\mathcal{E}_1U_x+S_0\mathcal{E}_2U_y}{U} +\aimininner{S_0\ell(U)}{U} = \aimininner{S_0F}{U}.
\end{equation}
We now calculate
\begin{equation}\label{eq6.5_1}
\begin{split}
\aimininner{S_0U_t}{U}&=\frac{1}{2}\frac{d}{dt}\aimininner{S_0U}{U} - \frac{1}{2}\aimininner{S_{0,t}U}{U}, \\
\aimininner{S_0\ell(U)}{U} &= 0;
\end{split}
\end{equation}
and, using integration by parts, we find that
\begin{equation}\label{eq6.5_2}
\begin{split}
\aimininner{S_0\mathcal{E}_1U_x+S_0\mathcal{E}_2U_y}{U}&=\frac{1}{2}\int_0^{L_2}\aimininner{S_0\mathcal{E}_1 U}{U}\bigg|_{x=0}^{x=L_1}dy + \frac{1}{2}\int_0^{L_1}\aimininner{S_0\mathcal{E}_2 U}{ U}\bigg|_{y=0}^{y=L_2}dx  \\
&\hspace{20pt} - \frac{1}{2}\aimininner{\big((S_0\mathcal{E}_1)_x + (S_0\mathcal{E}_2)_y\big) U}{U} \\
&\geq - \frac{1}{2}\aimininner{\big((S_0\mathcal{E}_1)_x + (S_0\mathcal{E}_2)_y\big) U}{U},
\end{split}
\end{equation}
where the last inequality results from the boundary conditions \eqref{eq6.4}$_3$ and the fact that $S_0\mathcal{E}_1,S_0\mathcal{E}_2$ are both positive definite.

Finally, we obtain the following inequality by gathering the calculations \eqref{eq6.5_1}-\eqref{eq6.5_2}:
\begin{equation}\label{eq6.7}
\frac{d}{dt}\aimininner{S_0U}{U} \leq \aimininner{S_{0,t}U}{U} + \aimininner{\big((S_0\mathcal{E}_1)_x + (S_0\mathcal{E}_2)_y\big) U}{U} + 2\aimininner{S_0F}{U}.
\end{equation}
We set $I_0(t)=\aimininner{S_0U}{U}=\aiminnorm{S_0^{\frac{1}{2}}U}_{L^2(\Omega)}^2$, and then the first two terms in the right-hand side of \eqref{eq6.7} are bounded by
\begin{equation}\label{eq6.7_1}
\begin{split}
&\aiminnorm{\big(S_{0,t} + (S_0\mathcal{E}_1)_x + (S_0\mathcal{E}_2)_y \big)S_0^{-1}}_{L^\infty(\Omega_T)}\aimininner{S_0U}{U} \\
&\hspace{10pt}\leq C(\aiminnorm{\widehat U_t}_{L^\infty(\Omega_T)},\,\aiminnorm{\widehat U_x}_{L^\infty(\Omega_T)},\,\aiminnorm{\widehat U_y}_{L^\infty(\Omega_T)},\,\aiminnorm{\widehat U}_{L^\infty(\Omega_T)})I_0(t),
\end{split}
\end{equation}
which is dominated by $C(\aiminnorm{\widehat U_t}_{L^\infty(H^2)},\,\aiminnorm{\widehat U}_{L^\infty(H^3)})I_0(t)$ by using the Sobolev embedding $H^2(\Omega)\subset L^\infty(\Omega)$. Using the Cauchy-Schwarz inequality, we estimate the last term in the right-hand side of \eqref{eq6.7}:
\begin{equation}\label{eq6.7_2}
\begin{split}
2\aimininner{S_0F}{U} &\leq \aiminnorm{S_0^{\frac{1}{2}}F}_{L^2(\Omega)}^2 + \aiminnorm{S_0^{\frac{1}{2}}U}_{L^2(\Omega)}^2 \\
&\leq C( \aiminnorm{\widehat U}_{L^\infty(H^3)} )\aiminnorm{F(t)}_{L^2(\Omega)}^2 + I_0(t).
\end{split}
\end{equation}
Combining with \eqref{eq6.7_1} and \eqref{eq6.7_2}, \eqref{eq6.7} implies that
\begin{equation}\label{eq6.8}
\begin{split}
\frac{d}{dt}I_0(t) &\leq \big(C\big(\aiminnorm{\widehat U_t}_{L^\infty(H^2)},\,\aiminnorm{\widehat U}_{L^\infty(H^3)}\big) +1 \big)I_0(t) + C(\aiminnorm{\widehat U}_{L^\infty(H^3)} )\aiminnorm{F(t)}_{L^2(\Omega)}^2 \\
&\leq r_1(\widehat U)\big(I_0(t) + \aiminnorm{F(t)}_{L^2(\Omega)}^2\big),
\end{split}
\end{equation}
where the constant $r_1(\widehat U)=C\big(\aiminnorm{\widehat U_t}_{L^\infty(H^2)},\,\aiminnorm{\widehat U}_{L^\infty(H^3)}\big) +1$ only depends increasingly on $\aiminnorm{\widehat U_t}_{L^\infty(H^2)}$, $\aiminnorm{\widehat U}_{L^\infty(H^3)}$. We observe that 
$$r_1(\widehat U) = r_1(\aiminnorm{\widehat U_t}_{L^\infty(H^{m-1})},\,\aiminnorm{\widehat U}_{L^\infty(H^m)}) \leq r_1(M,M)=r_1(M),$$
with $m\geq 3$ by the assumption \eqref{asp6.2}, and we write $r_1=r_1(M)\geq 1$ for the sake of simplicity. Using Gronwall's lemma for \eqref{eq6.8}, we obtain
\begin{equation}\label{eq6.9}
\begin{split}
I_0(t) &\leq e^{r_1t}( I_0(0) + r_1\int_0^t \aiminnorm{F(s)}_{L^2(\Omega)}^2 ds) \\
&\leq e^{r_1t}r_1\cdot\big( \aiminnorm{U_0}_{L^2(\Omega)}^2 + \int_0^t \aiminnorm{F(s)}_{L^2(\Omega)}^2 ds \big).
\end{split}
\end{equation}
Noticing that
$$I_0(t)=\int_\Omega(u^2+v^2+\frac{g}{\hat\phi}\phi^2)dxdy \geq \text{min}(1,g/c_1)\aiminnorm{U(t)}_{L^2(\Omega)}^2,$$
and setting $r_2=1/\text{min}(1,g/c_1)$, \eqref{eq6.9} implies that
\begin{equation}\label{eq6.e1}
\aiminnorm{U(t)}_{L^2(\Omega)}^2\leq e^{r_1t}r_1r_2( \aiminnorm{U_0}_{L^2(\Omega)}^2 + \int_0^t \aiminnorm{F(s)}_{L^2(\Omega)}^2 ds ).
\end{equation}
Taking the $L^\infty$-norm of \eqref{eq6.e1} over $[0,T]$ immediately gives
\begin{equation}\label{eq6.e2}
\aiminnorm{U}^2_{L^\infty(L^2)}\leq C_0(M,T)\big( \aiminnorm{U_0}_{L^2(\Omega)}^2 + T\aiminnorm{F}^2_{L^\infty(L^2)}\big),
\end{equation}
where $C_0(M,T)=e^{r_1(M)T}r_1(M)r_2$, only depends on the bound of the $L^\infty(H^m)$-norm of $\widehat U$ and the $L^\infty(H^{m-1})$-norm of $\widehat U_t$.

We now turn to extending the $L^2$-estimate \eqref{eq6.e2} to $H^m$-estimate.
Applying $\partial^\alpha=\partial_x^{\alpha_1}\partial_y^{\alpha_2}$ with $\aiminabs{\alpha}=\alpha_1+\alpha_2 \leq m$ to \eqref{eq6.4} and recalling that $U$ satisfies the compatibility boundary conditions \eqref{eq5.3prime}, we obtain that $\partial^\alpha U$ satisfies the following equations
\begin{equation}\label{eq6.11}
\begin{cases}
(\partial^\alpha U)_t + \mathcal{E}_1(\widehat U) (\partial^\alpha U)_x + \mathcal{E}_2(\widehat U) (\partial^\alpha U)_y + \ell (\partial^\alpha U) = F_\alpha,\\
\partial^\alpha U(0)=\partial^\alpha U_0,\\
\partial^\alpha U|_{\boldsymbol\Gamma} = 0,
\end{cases}
\end{equation}
where
$ F_\alpha=\partial^\alpha F - [\partial^\alpha, \mathcal{E}_1]U_x -[\partial^\alpha, \mathcal{E}_2]U_y. $
Observing that \eqref{eq6.11} has the same form as \eqref{eq6.4}, therefore proceeding exactly as for \eqref{eq6.e2}, we find
\begin{equation}\label{eq6.12}
\aiminnorm{\partial^\alpha U}^2_{L^\infty(L^2)}\leq C_0(M,T)\big( \aiminnorm{\partial^\alpha U_0}_{L^2(\Omega)}^2 + T\aiminnorm{F_\alpha}^2_{L^\infty(L^2)}\big),
\end{equation}
where $C_0(M,T)$ is the same as in \eqref{eq6.e2}.

We now need to estimate $F_\alpha$. Lemma \ref{lemb.1} \ref{en:item2}) with $k=m$ on the commutators in $F_\alpha$ gives
\begin{equation}\label{eq6.e3}
\begin{split}
\aiminnorm{F_\alpha(t)}_{L^2(\Omega)}^2 &\leq \aiminnorm{\partial^\alpha F(t)}_{L^2(\Omega)}^2 + C\bigg(\aiminnorm{\mathcal{E}_1(\widehat U(t))}_{H^m(\Omega)}^2\aiminnorm{U_x(t)}_{H^{\aiminabs{\alpha}-1}(\Omega)}^2 \\
&\hspace{30pt}+ \aiminnorm{\mathcal{E}_2(\widehat U(t))}_{H^m(\Omega)}^2\aiminnorm{U_y(t)}_{H^{\aiminabs{\alpha}-1}(\Omega)}^2\bigg) \\
&\leq \aiminnorm{\partial^\alpha F(t)}_{L^2(\Omega)}^2 + C(M) \aiminnorm{U(t)}_{H^{\aiminabs{\alpha}}(\Omega)}^2,
\end{split}
\end{equation}
where $C(M)$ only depends on $M$-the bound of the $L^\infty(H^m)$-norm of $\widehat U$.

Summing \eqref{eq6.12} for all $\aiminabs{\alpha}\leq m$ and using the estimates \eqref{eq6.e3} for $F_\alpha$, we finally arrive at
\begin{equation}\label{eq6.13}
\aiminnorm{U}^2_{L^\infty(H^m)} \leq C_0(M,T)\big( \aiminnorm{ U_0}_{H^m(\Omega)}^2 + T\aiminnorm{F}^2_{L^\infty(H^m)} + TC(M) \aiminnorm{U}^2_{L^\infty(H^m)}\big),
\end{equation}
where the constants $C(M)$ may be different at different places, but they enjoy the same property, i.e. they only depend on the bound of the $L^\infty(H^m)$-norm of $\widehat U$ in an increasing way.

We choose $T$ small enough so that $C_0(M,T)TC(M)\leq 1/2$; with this choice of $T$, we are able to absorb the term $\aiminnorm{U}_{L^\infty(H^m)}$ in the right-hand side of \eqref{eq6.13} and we find that
\begin{equation}\label{eq6.14}
\aiminnorm{U}^2_{L^\infty(H^m)} \leq 2 C_0(M,T)\big( \aiminnorm{ U_0}_{H^m(\Omega)}^2 + T\aiminnorm{F}^2_{L^\infty(H^m)} \big),
\end{equation}
where $C_0(M,T)$ is the same as in \eqref{eq6.e2}.
We emphasize the fact that the choice of $T$ only depends on the bound $M$ of the $L^\infty(H^m)$-norm of $\widehat U$ and the $L^\infty(H^{m-1})$-norm of $\widehat U_t$.

Finally, we estimate the $L^\infty(H^{m-1})$-norm of $U_t$. We write \eqref{eq6.4}$_1$ as
\begin{equation}\label{eq6.4.3}
U_t  = F -\ell(U)- \mathcal{E}_1(\widehat U) U_x - \mathcal{E}_2(\widehat U) U_y,
\end{equation}
We first take $H^{m-1}(\Omega)$-norm of \eqref{eq6.4.3} and use Lemma \ref{lemb.1} $i)$ with $s=m-1$ and $d=2$ to estimate the last two terms in the right-hand side of \eqref{eq6.4.3}; then we take $L^\infty$-norm over $[0,T]$, and we find
\begin{equation}\label{eq6.4.4}
\begin{split}
\aiminnorm{U_t}_{L^\infty(H^{m-1})} &\leq \aiminnorm{F}_{L^\infty(H^{m-1})} 
+ C\aiminnorm{\mathcal{E}_1(\widehat U)}_{L^\infty(H^{m-1})} \aiminnorm{U_x}_{L^\infty(H^{m-1})} \\
&\hspace{6pt}+ f\aiminnorm{U}_{L^\infty(H^{m-1})}  + C\aiminnorm{\mathcal{E}_2(\widehat U)}_{L^\infty(H^{m-1})} \aiminnorm{U_y}_{L^\infty(H^{m-1})} \\
&\leq \aiminnorm{F}_{L^\infty(H^{m-1})} +C(\aiminnorm{\widehat U}_{L^\infty(H^m)},\,f)\aiminnorm{U}_{L^\infty(H^m)},\\
\end{split}
\end{equation}
where $f$ is the Coriolis parameter. The inequality \eqref{eq6.4.4} shows that
\begin{equation}\label{eq6.4.5}
\aiminnorm{U_t}^2_{L^\infty(H^{m-1})}\leq 2\aiminnorm{F}^2_{L^\infty(H^{m-1})} + 2C_1(M)\aiminnorm{U}^2_{L^\infty(H^m)},
\end{equation}
where $C_1(M)$ only depends on $M$-the bound of $\widehat U$ in $L^\infty(H^m)$. We also obtain the following $L^\infty(L^2)$-estimate
\begin{equation}\label{eq6.4.6}
\aiminnorm{U_t}^2_{L^\infty(L^2)}\leq 2\aiminnorm{F}^2_{L^\infty(L^2)} + 2C_1(M)\aiminnorm{U}^2_{L^\infty(H^1)}.
\end{equation}

\subsection{Improved regularity}With the $H^m$-estimates \eqref{eq6.14} and \eqref{eq6.4.5} at hand, we are now able to gain back the derivatives lost in Theorem \ref{thm6.1} by shrinking down the time $T$, and we prove the following theorem. 
\begin{thm}\label{thm6.2}
Let there be given 
\[
U_0\in H^m(\Omega),\hspace{6pt} F,\,\widehat U\in L^\infty(0,T; H^m(\Omega)),\hspace{6pt}\widehat U_t\in L^\infty(0,T;H^{m-1}(\Omega)), 
\]
and furthermore we also assume that $U_0, F(t)$ satisfy the compatibility boundary conditions \eqref{eq5.3} for $t\in[0,T]$, and that $\widehat U$ satisfies \eqref{asp6.1} and \eqref{asp6.2}.
Then there exists $T>0$ small enough depending only on the bound of the $L^\infty(H^m)$-norm of $\widehat U$ and the $L^\infty(H^{m-1})$-norm of $\widehat U_t$ such that the system \eqref{eq6.2} associated with the initial condition
$U(0)=U_0$ and the homogeneous boundary conditions \eqref{eq5.3.1} has a unique solution $U=U(t)$ such that
\[
U\in L^\infty(0,T; H^m(\Omega)),\hspace{6pt} U_t\in L^\infty(0,T; H^{m-1}(\Omega)),
\]
and the solution $U$ satisfies the compatibility and boundary conditions \eqref{eq5.3} and the estimates \eqref{eq6.14} and \eqref{eq6.4.5}.
\end{thm}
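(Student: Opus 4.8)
The strategy is to combine the low-regularity well-posedness already available from Theorem~\ref{thm6.1} with the a priori estimates \eqref{eq6.14} and \eqref{eq6.4.5} by means of a regularization argument. First I would regularize the data: choose sequences $U_0^\varepsilon\in\mathcal{C}^\infty(\overline\Omega)^3$, $F^\varepsilon,\widehat U^\varepsilon\in\mathcal{C}^\infty([0,T];\mathcal{C}^\infty(\overline\Omega))$ such that $U_0^\varepsilon\to U_0$ in $H^m(\Omega)^3$, $F^\varepsilon\to F$ in $L^\infty(0,T;H^m(\Omega))$, and $\widehat U^\varepsilon\to\widehat U$ in $L^\infty(0,T;H^m(\Omega))$ with $\widehat U^\varepsilon_t\to\widehat U_t$ in $L^\infty(0,T;H^{m-1}(\Omega))$, all while preserving the compatibility conditions \eqref{eq5.3} for $U_0^\varepsilon$ and $F^\varepsilon$ and the structural conditions \eqref{asp6.1}, \eqref{asp6.2} (with a slightly enlarged $M$, uniformly in $\varepsilon$). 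The preservation of \eqref{eq5.3} under regularization is where the density Theorem~\ref{thm3.1} enters; the preservation of \eqref{asp6.1} is automatic for $\varepsilon$ small since the bounds are open conditions and Sobolev embedding controls the $L^\infty$ norms.

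Next, for each $\varepsilon$, since $\widehat U^\varepsilon$ is smooth it certainly satisfies the hypotheses $(1)$--$(3)$ of Theorem~\ref{thm6.1} (indeed with $m$ replaced by any larger integer), so Theorem~\ref{thm6.1} together with Remark~\ref{rmk6.1} produces a solution $U^\varepsilon\in\mathcal{C}^\infty([0,T];\mathcal{C}^\infty(\overline\Omega))$ of \eqref{eq6.2} with data $(U_0^\varepsilon,F^\varepsilon,\widehat U^\varepsilon)$, and by Remark~\ref{rmk6.0} this $U^\varepsilon$ satisfies the compatibility boundary conditions \eqref{eq5.3}. Now the key point: the time $T>0$ supplied by the a priori analysis in Subsection~\ref{sub-sec-priori} (the smallness condition $C_0(M,T)\,T\,C(M)\le 1/2$) depends only on the uniform bound $M$ and not on $\varepsilon$; hence on this common interval $[0,T]$ the smooth solutions $U^\varepsilon$ satisfy \eqref{eq6.14} and \eqref{eq6.4.5} with $\varepsilon$-independent constants. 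This gives uniform bounds
\[
\aiminnorm{U^\varepsilon}_{L^\infty(0,T;H^m)} + \aiminnorm{U^\varepsilon_t}_{L^\infty(0,T;H^{m-1})} \le C(M,T)\big(\aiminnorm{U_0}_{H^m} + \aiminnorm{F}_{L^\infty(H^m)}\big).
\]

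Finally I would pass to the limit. Writing the equation for the difference $U^\varepsilon-U^{\varepsilon'}$, which has the same structure as \eqref{eq6.4} with a background flow satisfying the uniform conditions and with a forcing term that tends to $0$ in $L^\infty(0,T;L^2(\Omega))$ (being a combination of $F^\varepsilon-F^{\varepsilon'}$, $(\widehat U^\varepsilon-\widehat U^{\varepsilon'})$ times bounded-in-$H^{m-1}$ derivatives of $U^{\varepsilon'}$, and $\ell$ of the difference), the $L^2$ a priori estimate \eqref{eq6.e2} shows $\{U^\varepsilon\}$ is Cauchy in $\mathcal{C}([0,T];L^2(\Omega)^3)$; call the limit $U$. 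The uniform $H^m$ and $H^{m-1}$-in-time bounds then give, by weak-$*$ compactness, that $U\in L^\infty(0,T;H^m(\Omega)^3)$ with $U_t\in L^\infty(0,T;H^{m-1}(\Omega)^3)$, that $U$ inherits the estimates \eqref{eq6.14} and \eqref{eq6.4.5} by lower semicontinuity of the norms, and that $U$ solves \eqref{eq6.2} with the prescribed data (passing to the limit in the linear equation is immediate in the sense of distributions, and the compatibility boundary conditions \eqref{eq5.3} pass to the limit because the relevant traces are continuous on the spaces involved, cf. Proposition~\ref{prop3.3}). Uniqueness follows again from the $L^2$ estimate \eqref{eq6.e2} applied to the difference of two solutions with the same data.

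\textbf{Main obstacle.} The delicate point is not the limiting argument itself but ensuring that the regularization can be done while respecting \emph{all} compatibility conditions \eqref{eq5.3} simultaneously and keeping the bounds $M$ uniform; in particular one must check that the $T$ from Subsection~\ref{sub-sec-priori} genuinely depends only on $M$ and the bound on $\widehat U_t$ and is therefore $\varepsilon$-independent, and one must be careful that $U^\varepsilon$ is defined on all of $[0,T]$ (which is where one uses that Theorem~\ref{thm6.1} gives a \emph{global-in-$[0,T]$} smooth solution for smooth data, so the a priori bound is only used to control norms, not to extend the existence time). A secondary technical wrinkle is that \eqref{asp6.2} controls $\widehat U_t$ in $H^{m-1}$ only, so one must verify that the regularized $\widehat U^\varepsilon$ can be chosen with $\widehat U^\varepsilon_t$ bounded uniformly in $L^\infty(0,T;H^{m-1})$ — e.g. by mollifying in space only, or in space and time with matched scales.
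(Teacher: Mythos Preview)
Your proposal is correct and follows essentially the same approach as the paper: regularize the data (the paper does this by explicit mollifiers supported in $\{0<\tfrac12 x<y<2x\}$, which automatically pushes supports away from $\boldsymbol\Gamma$ and hence preserves \eqref{eq5.3}, rather than by invoking Theorem~\ref{thm3.1} abstractly), apply Theorem~\ref{thm6.1} with Remarks~\ref{rmk6.0}--\ref{rmk6.1} to obtain smooth solutions $U^\varepsilon$ satisfying the uniform a~priori bounds \eqref{eq6.14}--\eqref{eq6.4.5}, and then pass to the limit via the $L^2$ Cauchy argument on the difference equation combined with interpolation/weak-$*$ compactness. The obstacles you flag are exactly the ones the paper handles, and your resolution of them matches the paper's.
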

\begin{proof}
Let $\rho(x,y),\sigma(t)$ be mollifiers such that $\rho(x,y),\sigma(t)\geq 0, \int \rho dxdy=\int \sigma dt=1$ and $\rho$ has compact support in $\{0<\frac{1}{2}x<y<2x\}$. 
For a function $w$ defined on $\Omega$, $(\rho_\epsilon*\tilde w)|_\Omega$ stands for the restriction to $\Omega$ of $\rho_\epsilon*\tilde w$, where $\tilde w$ is the extension of $w$ by $0$ outside $\Omega$, and similar notations are also used for the functions defined in $\Omega_T$, or the vector functions (with the notation applied to each component of the vector functions).
We then set
\begin{equation}\nonumber
U_0^\epsilon = (\rho_\epsilon*\widetilde{U_0})|_\Omega,\hspace{6pt}F^\epsilon=((\rho\sigma)_\epsilon*\widetilde F)|_\Omega,\hspace{6pt}\widehat U^\epsilon=((\rho\sigma)_\epsilon*\widetilde{\widehat U})|_\Omega.
\end{equation}
Standard mollifier theory shows that $U_0^\epsilon,F^\epsilon,\widehat U^\epsilon$ converge to $U_0,F,\widehat U$ respectively as $\epsilon\rightarrow 0$ in the corresponding spaces \footnote[4]{See the details in a related situation in \cite{HT14b}}. Hence for $\epsilon$ small enough, we can assume that
\begin{equation}\nonumber
\begin{split}
&\aiminnorm{U_0^\epsilon}_{H^m(\Omega)}^2\leq 2\aiminnorm{U_0}_{H^m(\Omega)}^2,\hspace{6pt}\aiminnorm{F^\epsilon}_{L^\infty(H^m)}^2\leq 2\aiminnorm{F}_{L^2(H^m)}^2;\\
&\aiminnorm{\widehat U^\epsilon}_{L^\infty(H^m)}\leq 2M,\hspace{6pt}\aiminnorm{\widehat U_t^\epsilon}_{L^\infty(H^{m-1})}\leq 2M.
\end{split}
\end{equation}
In addition, with the choice of $\rho$, we have that the support of $U_0^\epsilon$ is away from ${\boldsymbol\Gamma}=\{x=0\}\cup\{y=0\}$, and so is the support of $F^\epsilon(t)$ for all $t\in[0,T]$. Therefore, $U_0^\epsilon,F^\epsilon(t)$ also satisfy \eqref{eq5.3} for all $t\in[0,T]$.
Then using Theorem \ref{thm6.1} and Remarks \ref{rmk6.0}-\ref{rmk6.1}, there exists a smooth solution $U^\epsilon$ for system \eqref{eq6.4} with $U_0,F,\widehat U$ replaced by $U_0^\epsilon, F^\epsilon,\widehat U^\epsilon$, and $U^\epsilon$ also satisfies the compatibility boundary conditions \eqref{eq5.3}. For $T>0$ small enough only depending on the bound of the $L^\infty(H^m)$-norm of $\widehat U$ and the $L^\infty(H^{m-1})$-norm of $\widehat U_t$, then the a priori estimates \eqref{eq6.14} gives that
\begin{equation}\label{eq6.15}
\begin{split}
\aiminnorm{U^\epsilon}_{L^\infty(H^m)}^2 &\leq 2 C_0(2M, T)\big( \aiminnorm{ U_0^\epsilon}_{H^m(\Omega)}^2 + T\aiminnorm{F^\epsilon}^2_{L^\infty( H^m)}\big)\\
&\leq 4 C_0(2M, T)\big( \aiminnorm{ U_0}_{H^m(\Omega)}^2 + T\aiminnorm{F}^2_{L^\infty( H^m)}\big).
\end{split}
\end{equation}
where $2M$ is the bound of the $L^\infty(H^m)$-norm of $\widehat U^\epsilon$ and the $L^\infty(H^{m-1})$-norm of $\widehat U_t^\epsilon$.

The inequality \eqref{eq6.15} gives a uniform bound on the sequence $\{U^\epsilon\}$, which implies that there exists a subsequence of $\{U^\epsilon\}$ converging weak-star in $L^\infty(0,T; H^m(\Omega))$. The next point is to prove that the sequence $\{U^\epsilon\}$ is Cauchy in $L^\infty(0,T; L^2(\Omega))$. For that purpose, we write
\begin{equation}
W^\epsilon = U^\epsilon - U^{\epsilon'},
\end{equation}
and subtracting the corresponding equations of form \eqref{eq6.4} satisfied by $U^\epsilon$ and $U^{\epsilon'}$, we obtain
\begin{equation}\label{eq6.16}
\begin{cases}
W^\epsilon_t + \mathcal{E}_1(\widehat U^\epsilon) W^\epsilon_x + \mathcal{E}_2(\widehat U^\epsilon) W^\epsilon_y + \ell(W^\epsilon) = \widehat F,\\
W^\epsilon(0)=U_0^\epsilon - U_0^{\epsilon'}, \\
W^\epsilon|_{\boldsymbol\Gamma} = 0,
\end{cases}
\end{equation}
where
$$ \widehat F = F^\epsilon-F^{\epsilon'} + \big(\mathcal{E}_1(\widehat U^{\epsilon'})-\mathcal{E}_1(\widehat U^\epsilon)\big)U_x^{\epsilon'}+\big(\mathcal{E}_2(\widehat U^{\epsilon'})-\mathcal{E}_2(\widehat U^\epsilon)\big)U_y^{\epsilon'}.$$
Noticing that \eqref{eq6.16} has the same form as \eqref{eq6.4}, therefore proceeding exactly as for \eqref{eq6.e2}, we obtain
\begin{equation}\label{eq6.16_1}
\aiminnorm{ W^\epsilon }_{L^\infty(L^2)}^2 \leq C_0(2M,T)\big(\aiminnorm{U_0^\epsilon - U_0^{\epsilon'}}_{L^2(\Omega)}^2 + T\aiminnorm{\widehat F}_{L^\infty(L^2)}^2 \big).
\end{equation}
Using the explicit expressions for $\mathcal{E}_1$ and $\mathcal{E}_2$, direct computation shows that
\begin{equation}\label{eq6.16_2}
\aiminnorm{\mathcal{E}_1(\widehat U^{\epsilon'})-\mathcal{E}_1(\widehat U^\epsilon)}_{L^\infty(L^2)}^2,\aiminnorm{\mathcal{E}_2(\widehat U^{\epsilon'})-\mathcal{E}_2(\widehat U^\epsilon)}_{L^\infty(L^2)}^2 \leq 3 \aiminnorm{\widehat U^\epsilon - \widehat U^{\epsilon'}}_{L^\infty(L^2)}^2.
\end{equation}
Therefore, combining the estimates in \eqref{eq6.16_1} and \eqref{eq6.16_2}, we obtain 
\begin{equation}\label{eq6.17}
\begin{split}
\aiminnorm{ U^\epsilon - U^{\epsilon'} }_{L^\infty(L^2)}^2 &\leq C_0(2M,T)\big(\aiminnorm{U_0^\epsilon - U_0^{\epsilon'}}_{L^2(\Omega)}^2 + T\aiminnorm{F^\epsilon - F^{\epsilon'}}_{L^\infty(L^2)}^2 \\
&\hspace{16pt}+3T \aiminnorm{\widehat U^\epsilon - \widehat U^{\epsilon'}}_{L^\infty(L^2)}^2( \aiminnorm{U_x^{\epsilon'}}_{L^\infty(\Omega_T)}^2 + \aiminnorm{U_x^{\epsilon'}}_{L^\infty(\Omega_T)}^2 )\big),
\end{split}
\end{equation}
which, with the use of the Sobolev embedding $H^2(\Omega)\subset L^\infty(\Omega)$ and noting that $m\geq 3$, is furthermore bounded by
\begin{equation}\label{eq6.18}
 C_0(2M,T)\big(\aiminnorm{U_0^\epsilon - U_0^{\epsilon'}}_{L^2(\Omega)}^2 + T\aiminnorm{F^\epsilon - F^{\epsilon'}}_{L^\infty(L^2)}^2 
+6T\aiminnorm{U^{\epsilon'}}_{L^\infty(H^m)}^2 \aiminnorm{\widehat U^\epsilon - \widehat U^{\epsilon'}}_{L^\infty(L^2)}^2  \big).
\end{equation}

Since $\{U_0^\epsilon\}$ is a Cauchy sequence in $L^2(\Omega)$, and $\{F^\epsilon\},\{\widehat U^\epsilon\}$ are Cauchy sequences in $L^\infty(L^2)$, and $\{U^{\epsilon}\}$ is uniformly bounded in the $L^\infty(H^m)$-norm by \eqref{eq6.15}, we obtain from \eqref{eq6.17}-\eqref{eq6.18} that $\{U^\epsilon\}$ is also a Cauchy sequence in $L^\infty(0,T;L^2(\Omega))$. Hence by $L^2-H^m$ interpolation, the sequence $\{U^\epsilon\}$ converges strongly in $L^\infty(0,T;H^{m-1}(\Omega))$ to a function $U$ which belongs to $L^\infty(0,T;H^{m}(\Omega))$. 
Using Proposition \ref{prop3.3}, we obtain that $U$ satisfies the compatibility boundary conditions \eqref{eq5.3} since $U^\epsilon$ satisfies \eqref{eq5.3}.

The a priori estimates \eqref{eq6.4.5} give a uniform bound on the sequence $\{U^\epsilon_t\}$, i.e.
\begin{equation}
\begin{split}
\aiminnorm{U^\epsilon_t}^2_{L^\infty(H^{m-1})}&\leq 2\aiminnorm{F^\epsilon}^2_{L^\infty(H^{m-1})} + 2C_1(2M)\aiminnorm{U^\epsilon}^2_{L^\infty(H^m)}\\
&\leq 4\aiminnorm{F}^2_{L^\infty(H^{m-1})} + 2C_1(2M)\aiminnorm{U^\epsilon}^2_{L^\infty(H^m)},
\end{split}
\end{equation}
since the sequence $\{U^{\epsilon}\}$ are uniformly bounded in the $L^\infty(H^m)$-norm.

Proceeding exactly as for \eqref{eq6.4.6}, we obtain that
\begin{equation}\label{eq6.19}
\aiminnorm{U^\epsilon_t - U^{\epsilon'}_t}^2_{L^\infty(L^2)}\leq 2\aiminnorm{\widehat F}^2_{L^\infty(L^2)} + 2C_1(2M)\aiminnorm{U^\epsilon - U^{\epsilon'}}^2_{L^\infty(H^1)},
\end{equation}
which implies that $\{U^\epsilon_t\}$ is also a Cauchy sequence in $L^\infty(L^2)$ by using the above estimates for $\widehat F$ and noting that $\{U^\epsilon\}$ is Cauchy in $L^\infty(H^{m-1})$ with $m\geq 3$. Therefore, by $L^2-H^{m-1}$ interpolation, we obtain that $\{U^\epsilon_t\}$ converges strongly in $L^\infty(H^{m-2})$ to a function $V$ which belongs to $L^\infty(H^{m-1})$.

Now passing to the limit, we obtain that $U$ solves \eqref{eq6.4}, and $U_t=V$ at least in the sense of distributions. Finally, proceeding exactly as in Subsection \ref{sub-sec-priori}, we see that the solution $U$ satisfies the estimates \eqref{eq6.14} and \eqref{eq6.4.5}; the uniqueness directly follows from the estimate \eqref{eq6.14}. We thus completed the proof.
\end{proof}
\begin{rmk}[Non-homogeneous boundary conditions]\label{rmk6.2}
Using Remark 9.1 in \cite{HT14a}, the existence of a solution for the linear system \eqref{eq6.2} associated with non-homogeneous boundary conditions can be obtained, we omit the details here; see \cite{RTT08b} for a similar situation.
\end{rmk}

\section{The fully nonlinear shallow water system}\label{sec-nonlinear-swe}
In this section, we aim to investigate the well-posedness for Eqs. \eqref{eq1.1} associated with suitable initial conditions and homogeneous boundary conditions, and we will make a remark about the case of non-homogeneous boundary conditions. Keeping the notations introduced in Section \ref{sec-swe-operator} and \ref{sec-linear}, the fully nonlinear shallow water system reads in compact form
\begin{equation}\label{eq7.1}
U_t + \mathcal{E}_1(U)U_x + \mathcal{E}_2(U)U_y + \ell(U) = 0.
\end{equation}

\subsection{Stationary solution}
We want to study system \eqref{eq7.1} near a stationary solution, and we start by constructing such a stationary solution $(u,v,\phi)=(u_s, v_s, \phi_s)$. These functions are independent of time and satisfy
\begin{equation}\label{eq7.3}
\mathcal{E}_1(U)U_x + \mathcal{E}_2(U)U_y + \ell(U) = 0.
\end{equation}

The existence of the general stationary solution $U_s$ to \eqref{eq7.3}, which satisfies the supercritical condition, is a 1-dimensional hyperbolic problem if we multiply by $\mathcal{E}_2^{-1}$ on both sides of \eqref{eq7.3} and treat the $y$-direction as the time-like direction as we already did in Lemma \ref{lem5.2}. The general results in \cite[Chapter 11]{BS07} guarantee the existence of the stationary solution $U_s$ if we specify suitable initial ($y$-direction) and boundary ($x$-direction) conditions. Actually, Subsection 2.1 in \cite{HPT11} provides an $y$-independent stationary solution to \eqref{eq7.3} satisfying the supercritical condition. But in what follows, we think of $U_s$ in a general form (i.e. $U_s$ depends both $x$ and $y$).

We thus choose our stationary solution $U_s=(u_{s},v_{s}, \phi_{s})^t$ of \eqref{eq7.3} satisfying a strong form of the supercritical condition, i.e.
\begin{equation}\label{cond7.1}
	\begin{cases}
		2c_0 \leq u_s,v_s,\phi_s \leq \frac{1}{2}c_1,\\
		u_s^2 - g\phi_s\geq 2c_2^2,\, v_s^2 - g\phi_s \geq 2c_2^2,\\
	\end{cases}
\end{equation}
where $c_0,c_1,c_2$ are given, positive constants which will play the same role as those in \eqref{asp6.1}.

We set $U = U_s + \widetilde{U}$.
Note that if we choose $\delta$ sufficiently small so that if $\aiminabs{\widetilde U}\leq \delta$ (i.e. $\aiminabs{\tilde u},\aiminabs{\tilde v}, \aiminabs{\tilde\phi} < \delta$), then $U$ satisfies relations similar to \eqref{cond7.1}, that is:
\begin{equation}\label{cond7.2}
	\begin{cases}
		c_0 \leq \tilde u+u_s,\tilde v+v_s,\tilde \phi+\phi_s \leq c_1,\\
		(\tilde u+u_s)^2 - g(\tilde \phi+\phi_s)\geq c_2^2,\, (\tilde v+v_s)^2 - g(\tilde \phi+\phi_s) \geq c_2^2.\\
	\end{cases}
\end{equation}
The relations \eqref{cond7.2} will guarantee that we remain in the supercritical case.

We then substitute $U = U_s + \widetilde{U}$ into \eqref{eq7.1}; we obtain a new system for $\widetilde U$, and dropping the tildes, our new system reads:
\begin{equation}\label{eq7.5}
U_t + \mathcal{E}_1(U+U_s)U_x + \mathcal{E}_2(U+U_s)U_y + \ell(U) = F^U,
\end{equation}
where 
\begin{equation}\label{eq7.6}
\begin{split}
 F^U&= -\mathcal{E}_1(U+U_s)U_{s,x}-\mathcal{E}_2(U+U_s)U_{s,y}-\ell(U_s)\\
&=\begin{pmatrix}
uu_{s,x}+vu_{s,y}\\
uv_{s,x}+vv_{s,y}\\
u\phi_{s,x}+v\phi_{s,y}+\phi(u_{s,x}+v_{s,y})
\end{pmatrix}.
\end{split}
\end{equation}
In order to have the last equality, we use the fact that $U_s$ is a stationary solution satisfying \eqref{eq7.3}.
We supplement \eqref{eq7.5} with the following initial and homogeneous boundary conditions:
\begin{equation}\label{eq7.7}
\text{I.C. } U(0)=U_0,\hspace{6pt}\text{B.C. }U|_{x=0,y=0}=0.
\end{equation}
Observe that we can rewrite \eqref{eq7.5} as
\begin{equation}\label{eq7.e1}
U_x =\mathcal{E}_1(U+U_s)^{-1}\big( F^U-U_t -\mathcal{E}_2(U+U_s)U_y - \ell(U) \big);
\end{equation}
at $x=0$, we immediately see that $U_x|_{x=0}=0$. Applying $\partial_x^k$ with $1\leq k\leq m-1$ to \eqref{eq7.e1}, we can conclude by induction that
$\partial_x^k U|_{x=0} = 0$, for all $k=0,\cdots,m$.
Similarly, we also obtain that $\partial_y^k U|_{y=0} =0$ for all $k=0,\cdots,m$ at $y=0$. Therefore, if $U$ satisfies \eqref{eq7.5} and \eqref{eq7.7}, then $U$ also satisfies the compatibility boundary conditions \eqref{eq5.3}, i.e.
\begin{equation}\label{eq7.e2}
\begin{cases}
	\partial^{k}_{x}U=0, \text{ on } \Gamma_{1}=\{x=0 \},\,\forall\, 0\leq k\leq m, \\
	\partial^{k}_{y}U=0, \text{ on } \Gamma_{3}=\{y=0 \},\,\forall\, 0\leq k\leq m.
\end{cases}	
\end{equation}

\subsection{Nonlinear shallow water system}In order to be able to solve the nonlinear system \eqref{eq7.5}-\eqref{eq7.7}, we require the initial and boundary conditions to be compatible. We thus assume that $U_0$ satisfies the compatibility boundary conditions \eqref{eq7.e2} (i.e. \eqref{eq5.3}). We are now on the stage to prove the following result.

\begin{thm}\label{thm7.1}
Let there be given the stationary solution $U_s\in H^{m+1}(\Omega)$ with $m\geq 3$, and two positive constants $M_0,M$ such that
$$M_0,M>0,\hspace{6pt} \sqrt{ M_0 }\in (0,\frac{\delta}{\nu_m}],\hspace{6pt} M = \sqrt{M_0} + \aiminnorm{U_s}_{H^m(\Omega)}, $$
where $\nu_m$ denotes the norm of the Sobolev embedding $H^m(\Omega)\subset L^\infty(\Omega)$. We are also given the initial condition $U_0\in H^m(\Omega)$ which satisfies \eqref{eq7.e2} and
\begin{equation}\label{asp7.1}
\aiminnorm{U_0}^2_{H^m(\Omega)} \leq \text{min}\bigg(M_0,\,\frac{M_0}{2C(U_s) + 4C_1(M)C_0(M,1)},\,  \frac{M_0}{ 4C_0(M,1) } \bigg),
\end{equation}
where $C_0(M,1)=e^{r_1(M)}r_1(M)r_2$ (resp. $C_1(M)$) is the constant appearing in \eqref{eq6.e2} (resp. \eqref{eq6.4.5}), and $C(U_s)$ only depends on the bound of the $H^{m+1}(\Omega)$-norm of $U_s$ (see \eqref{eq7.e6} below). 

Then there exists $T>0$ only depending on
the initial data $U_0$ and the stationary solution $U_s$
such that the system \eqref{eq7.5}-\eqref{eq7.7} admits a unique solution $U$ satisfying
\begin{equation}\nonumber
U\in L^\infty(0,T; H^m(\Omega)),\hspace{6pt} U_t\in L^\infty(0,T; H^{m-1}(\Omega)).
\end{equation}
\end{thm}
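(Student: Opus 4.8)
The plan is to solve the nonlinear system \eqref{eq7.5}--\eqref{eq7.7} by a Picard iteration built on the linear well-posedness result of Theorem \ref{thm6.2}. With the data and constants $M_0,M,\delta,\nu_m$ fixed as in the statement, set $U^{(0)}\equiv 0$; then $F^{U^{(0)}}=0$ and the coefficient $\widehat U^{(0)}:=U_s$ is time-independent and satisfies \eqref{asp6.1} by \eqref{cond7.1}. Given $U^{(n)}$, let $U^{(n+1)}$ be the solution furnished by Theorem \ref{thm6.2} of the linear problem
\begin{equation}\nonumber
U^{(n+1)}_t+\mathcal{E}_1(U^{(n)}+U_s)\,U^{(n+1)}_x+\mathcal{E}_2(U^{(n)}+U_s)\,U^{(n+1)}_y+\ell(U^{(n+1)})=F^{U^{(n)}},
\end{equation}
with $U^{(n+1)}(0)=U_0$ and $U^{(n+1)}|_{\boldsymbol\Gamma}=0$, where $F^{U}$ is given by \eqref{eq7.6}. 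The core of the argument is to show that this sequence is well defined and uniformly bounded on one common interval $[0,T]$ with $T$ depending only on $U_s$ and $U_0$.

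To run the induction, suppose $U^{(n)}$ is defined on $[0,T]$, satisfies the compatibility conditions \eqref{eq5.3}, and obeys the bootstrap hypotheses $\|U^{(n)}\|^2_{L^\infty(0,T;H^m)}\le M_0$ and $\|U^{(n)}_t\|^2_{L^\infty(0,T;H^{m-1})}\le M^2$. Since $\sqrt{M_0}\le\delta/\nu_m$ and $H^m(\Omega)\subset L^\infty(\Omega)$, we get $|U^{(n)}|\le\delta$, so $\widehat U^{(n)}:=U^{(n)}+U_s$ satisfies \eqref{cond7.2}, hence \eqref{asp6.1}; moreover $\|\widehat U^{(n)}\|_{L^\infty(H^m)}\le\sqrt{M_0}+\|U_s\|_{H^m}=M$ and $\|\widehat U^{(n)}_t\|_{L^\infty(H^{m-1})}=\|U^{(n)}_t\|_{L^\infty(H^{m-1})}\le M$, which is \eqref{asp6.2}; and because $F^{U}$ is, by \eqref{eq7.6}, a sum of products of a component of $U$ with a first derivative of $U_s$, the facts that $U_s\in H^{m+1}(\Omega)$ and that $U^{(n)},U_0$ satisfy \eqref{eq5.3} give $F^{U^{(n)}}\in L^\infty(0,T;\mathcal{H}^m_{\boldsymbol\Gamma})$ through Lemma \ref{lemb.1}. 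Hence Theorem \ref{thm6.2} applies on $[0,T]$ as long as $T\le T_1(M)$, producing $U^{(n+1)}$ with $U^{(n+1)}\in L^\infty(0,T;H^m(\Omega))$, $U^{(n+1)}_t\in L^\infty(0,T;H^{m-1}(\Omega))$, satisfying \eqref{eq5.3} and the a priori bounds \eqref{eq6.14}, \eqref{eq6.4.5}. Using \eqref{eq6.14}, the estimate $\|F^{U^{(n)}}\|^2_{L^\infty(H^m)}\le C(U_s)\|U^{(n)}\|^2_{L^\infty(H^m)}\le C(U_s)M_0$, and $C_0(M,T)\le C_0(M,1)$ for $T\le1$, one gets $\|U^{(n+1)}\|^2_{L^\infty(H^m)}\le 2C_0(M,1)\|U_0\|^2_{H^m}+2C_0(M,1)\,T\,C(U_s)M_0$, which is $\le M_0$ once $\|U_0\|^2_{H^m}\le M_0/(4C_0(M,1))$ (third entry of \eqref{asp7.1}) and $T\le 1/(4C_0(M,1)C(U_s))$; feeding this back into \eqref{eq6.4.5} with $\|F^{U^{(n)}}\|^2_{L^\infty(H^{m-1})}\le C(U_s)M_0$ controls $\|U^{(n+1)}_t\|^2_{L^\infty(H^{m-1})}$ by a sum of multiples of $C(U_s)M_0$, $C_1(M)C_0(M,1)\|U_0\|^2_{H^m}$ and $C_1(M)M_0$, and the second entry of \eqref{asp7.1}, together with the smallness of $M_0$ built into $\sqrt{M_0}\le\delta/\nu_m$ (with $\delta$ taken small relative to $U_s$ if need be), keeps this $\le M^2$. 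Thus the bootstrap propagates; since $U^{(0)}$ trivially satisfies it, $\{U^{(n)}\}$ is defined and uniformly bounded on $[0,T]$ and every $\widehat U^{(n)}$ stays supercritical.

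For convergence, set $W^{(n)}=U^{(n+1)}-U^{(n)}$; it solves a linear system of the form \eqref{eq6.4} with coefficients $\mathcal{E}_i(U^{(n)}+U_s)$, zero initial datum, homogeneous boundary data, and right-hand side $G^{(n)}=F^{U^{(n)}}-F^{U^{(n-1)}}+\big(\mathcal{E}_1(U^{(n-1)}+U_s)-\mathcal{E}_1(U^{(n)}+U_s)\big)U^{(n)}_x+\big(\mathcal{E}_2(U^{(n-1)}+U_s)-\mathcal{E}_2(U^{(n)}+U_s)\big)U^{(n)}_y$. Because $\mathcal{E}_1,\mathcal{E}_2$ are affine, $F^{U}$ is linear in $U$, and $\|U^{(n)}\|_{\mathcal{C}^1(\overline\Omega)}$ is uniformly bounded (from $H^m(\Omega)\subset\mathcal{C}^1(\overline\Omega)$ with $m\ge3$), one has $\|G^{(n)}\|_{L^\infty(L^2)}\le C(M,U_s)\|W^{(n-1)}\|_{L^\infty(L^2)}$, so the $L^2$ a priori estimate \eqref{eq6.e2} gives $\|W^{(n)}\|^2_{L^\infty(L^2)}\le C_0(M,T)\,T\,C(M,U_s)^2\,\|W^{(n-1)}\|^2_{L^\infty(L^2)}$; shrinking $T$ once more (still only in terms of $M$ and $C(M,U_s)$) makes the map $W^{(n-1)}\mapsto W^{(n)}$ a contraction, so $\{U^{(n)}\}$ converges in $L^\infty(0,T;L^2(\Omega))$. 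By $L^2$--$H^m$ interpolation, $U^{(n)}\to U$ strongly in $L^\infty(0,T;H^{m-1}(\Omega))$ with $U\in L^\infty(0,T;H^m(\Omega))$ the weak-$*$ limit, and, expressing $U^{(n)}_t$ from the equation, $U^{(n)}_t\to U_t$ strongly in $L^\infty(0,T;H^{m-2}(\Omega))$ with $U_t\in L^\infty(0,T;H^{m-1}(\Omega))$. The strong $H^{m-1}$ convergence permits passage to the limit in each (at most quadratic) term of \eqref{eq7.5}; the conditions $U(0)=U_0$ and $U|_{\boldsymbol\Gamma}=0$ pass to the limit since $U^{(n)}\in\mathcal{C}([0,T];H^{m-1}(\Omega))$ (as $U^{(n)}_t\in L^\infty$) and by Proposition \ref{prop3.3}, and $U+U_s$ stays supercritical. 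Uniqueness follows by the same device: the difference of two solutions with identical data satisfies a system of the form \eqref{eq6.4} with coefficients built from one of them and $L^2$ right-hand side bounded by $C$ times the $L^2$-norm of the difference, so a Gronwall argument applied to $\frac{d}{dt}\|S_0^{1/2}(\cdot)\|^2_{L^2(\Omega)}$ forces the difference to vanish.

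The step I expect to be the main obstacle is exactly this uniform-in-$n$ closure of the a priori bounds on a time interval independent of $n$: one has to verify that the three thresholds of \eqref{asp7.1} and the smallness of $T$ make the linear estimates \eqref{eq6.14} and \eqref{eq6.4.5} exactly reproduce the bootstrap hypotheses --- in particular that $\widehat U^{(n)}$ never leaves the supercritical regime \eqref{cond7.2} and that both $\|\widehat U^{(n)}\|_{L^\infty(H^m)}$ and $\|\widehat U^{(n)}_t\|_{L^\infty(H^{m-1})}$ remain bounded by the single constant $M$ on which the estimates of Section \ref{sec-linear} depend. The only genuinely structural point, familiar from the theory of quasilinear symmetric hyperbolic systems, is that one can contract only in the low norm $L^2$ and must recover the top-order regularity through weak compactness together with $L^2$--$H^m$ interpolation.
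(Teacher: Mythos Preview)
Your overall strategy --- Picard iteration built on Theorem~\ref{thm6.2}, uniform bootstrap bounds, contraction in $L^\infty(L^2)$, then $L^2$--$H^m$ interpolation to recover the top regularity --- is exactly the paper's, and the contraction and limit-passage steps are fine. The cosmetic differences (starting from $U^{(0)}=0$ rather than $U^0=U_0$, and targeting $\|U^{(n)}_t\|^2\le M^2$ rather than $\le M_0$) cause no trouble.

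There is, however, a real gap in your closure of the $U_t$-bootstrap. Your bound $\|F^{U^{(n)}}\|^2_{L^\infty(H^{m-1})}\le C(U_s)M_0$ is correct but too crude: fed into \eqref{eq6.4.5} it leaves the term $2C(U_s)M_0$ on the right, which carries \emph{no} small parameter --- neither $T$ nor $\|U_0\|_{H^m}$. You then try to absorb it by invoking ``smallness of $M_0$ built into $\sqrt{M_0}\le\delta/\nu_m$ (with $\delta$ taken small relative to $U_s$ if need be)'', but $\delta$ is \emph{fixed} by the supercriticality requirement \eqref{cond7.2} and is not at your disposal; nothing in the stated hypotheses forces $(2C(U_s)+2C_1(M))M_0\le M^2$. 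The second entry of \eqref{asp7.1} is calibrated to a different estimate and does not help here.

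The missing device is a mean-value-theorem step exploiting the \emph{linearity} of $F^U$ in $U$ (see \eqref{eq7.6}): one has $(F^{U^{(n)}})_t=F^{U^{(n)}_t}$, and since $U^{(n)}(0)=U_0$ for $n\ge1$,
\[
\|F^{U^{(n)}}\|^2_{L^\infty(H^{m-1})}\;\le\;2T^2\,C(U_s)\,\|U^{(n)}_t\|^2_{L^\infty(H^{m-1})}\;+\;2\,C(U_s)\,\|U_0\|^2_{H^{m-1}}.
\]
Now every term in the resulting bound on $\|U^{(n+1)}_t\|^2_{L^\infty(H^{m-1})}$ carries either a factor $T$ or a factor $\|U_0\|^2_{H^m}$; the second entry of \eqref{asp7.1} is designed precisely so that $(C(U_s)+2C_1(M)C_0(M,1))\|U_0\|^2_{H^m}\le M_0/2$, and choosing $T$ small handles the rest. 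Insert this step (and, correspondingly, propagate the bound $\|U^{(n)}_t\|^2\le M_0$ rather than $\le M^2$) and your argument goes through.
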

\begin{proof}
As a preliminary, we choose $T$ such that $T\leq 1$.

Considering the compatibility boundary conditions \eqref{eq7.e2}, the resolution of the nonlinear system \eqref{eq7.5}-\eqref{eq7.7} will be done using the following iterative scheme
\begin{equation}\label{eq7.9}
\begin{cases}
U_t^{k+1} + \mathcal{E}_1(U^k+U_s)U^{k+1}_x + \mathcal{E}_2(U^k+U_s)U^{k+1}_y + \ell (U^{k+1}) = F^{U^k},\\
U^{k+1}(0)=U_0,\\
U^{k+1}|_{\boldsymbol\Gamma} = 0.\\
\end{cases}
\end{equation}
We initiate our iteration scheme by setting $U^0=U_0$, and then construct the approximate solutions $U^k$ by induction.

If we assume that 
\begin{equation}\label{eq7.10}
\aiminnorm{U^k}^2_{L^\infty(0,T;H^m(\Omega))}\leq M_0,\hspace{6pt}\aiminnorm{U_t^k}^2_{L^\infty(0,T;H^{m-1}(\Omega))}\leq M_0, \hspace{6pt} U^k\text{ satisfies \eqref{eq7.e2}},
\end{equation}
then we have that $F^{U^k}$ also satisfies \eqref{eq7.e2}. Using the Sobolev embedding $H^m(\Omega)\subset L^\infty(\Omega)$, the $L^\infty$-norm of $U^k$ is controlled by $\delta$, which shows that $U^k$ satisfies the supercritical condition \eqref{cond7.2}, i.e.
\begin{equation}\nonumber
\begin{cases}
c_0 \leq u^k+u_s,v^k+v_s,\phi^k+\phi_s \leq c_1,\\
(u^k+u_s)^2 - g(\phi^k+\phi_s)\geq c_2^2,\,(v^k+v_s)^2 - g(\phi^k+\phi_s)\geq c_2^2;
\end{cases}
\end{equation}
furthermore, we have
$$\aiminnorm{U^k+U_s}_{L^\infty(H^m)} \leq \sqrt{M_0} + \aiminnorm{U_s}_{H^m(\Omega)}\leq M,\hspace{6pt}\aiminnorm{(U^k+U_s)_t}_{L^\infty(H^{m-1})} \leq \sqrt{M_0} \leq M. $$
Therefore, for $T>0$ small enough only depending on $M$,
applying Theorem \ref{thm6.2} to \eqref{eq7.9} with $\widehat U= U^k+U_s,F=F^{U^k}$, gives a solution $U^{k+1}$ which satisfies \eqref{eq7.e2}, and that
\begin{equation}\label{eq7.e3}
\begin{split}
\aiminnorm{U^{k+1}}^2_{L^\infty(H^m)} &\leq 2 C_0(M,T)\big( \aiminnorm{ U_0}_{H^m(\Omega)}^2 + T\aiminnorm{F^{U^k}}^2_{L^\infty(H^m)} \big) \\
&\leq (\text{using Lemma \ref{lemb.1} $i)$ with $s=m$ and $d=2$ for $F^{U^k}$})\\
&\leq 2 C_0(M,1)\big( \aiminnorm{ U_0}_{H^m(\Omega)}^2 + TC\aiminnorm{U_s}^2_{H^{m+1}(\Omega)}\aiminnorm{U^k}^2_{L^\infty(H^m)}\big), 
\end{split}
\end{equation}
and
\begin{equation}\label{eq7.e4}
\begin{split}
\aiminnorm{U_t^{k+1}}^2_{L^\infty(H^{m-1})} \leq 2\aiminnorm{F^{U^k}}^2_{L^\infty(H^{m-1})} + C_1(M)\aiminnorm{U^{k+1}}^2_{L^\infty(H^m)},
\end{split}
\end{equation}
where $C_0(M,T)$ (resp. $C_1(M)$) is the constant appearing in \eqref{eq6.e2} (resp. \eqref{eq6.4.5}),
and $M$ is the bound of the $L^\infty(H^m)$-norm of $\widehat U$ ($=U^k+U_s$) and the $L^\infty(H^{m-1})$-norm of $\widehat U_t$ ($=U_t^k$).

It follows from the explicit form \eqref{eq7.6} of $F^U$ that
\begin{equation}
(F^U)_t = F^{U_t}.
\end{equation}
In particular, $(F^{U^k})_t = F^{U_t^k}$. Using also that $U^k|_{t=0}=U_0$, we obtain with the mean value theorem that
\begin{equation}
\aiminabs{ F^{U^k}(t) } \leq \aiminabs{ F^{U^k}(t) - F^{U_0} } + \aiminabs{ F^{U_0} } 
\leq t \aiminabs{ F^{U_t^k}(t') }  + \aiminabs{ F^{U_0} },
\end{equation}
for all $t\in[0,T]$ and for some $t'\in(0,t)$, which implies that
\begin{equation}\label{eq7.e5}
\begin{split}
\aiminnorm{F^{U^k}}^2_{L^\infty(H^{m-1})} 
&\leq 2T^2\aiminnorm{ F^{U_t^k} }^2_{L^\infty(H^{m-1})} + 2\aiminnorm{F^{U_0}}^2_{L^\infty(H^{m-1})}\\
&\leq (\text{using Lemma \ref{lemb.1} $i)$ with $s=m-1$ and $d=2$})\\
&\leq 2T^2C\aiminnorm{U_s}^2_{H^{m}(\Omega)}\aiminnorm{U_t^k}^2_{L^\infty(H^{m-1})} + 2C\aiminnorm{U_s}^2_{H^{m}(\Omega)}\aiminnorm{U_0}^2_{H^{m-1}(\Omega)}.
\end{split}
\end{equation}
Gathering the estimates \eqref{eq7.e3}-\eqref{eq7.e5}, and using the assumption \eqref{eq7.10}, we finally arrive at
\begin{equation}\label{eq7.e6}
\begin{cases}
\aiminnorm{U^{k+1}}^2_{L^\infty(H^m)}\leq 2 C_0(M,1)\aiminnorm{ U_0}_{H^m(\Omega)}^2 + TC_0(M,1)M_0C(U_s),\\
\aiminnorm{U_t^{k+1}}^2_{L^\infty(H^{m-1})} \leq \big(C(U_s) + 2C_1(M)C_0(M,1)\big) \aiminnorm{ U_0}_{H^m(\Omega)}^2 \\
\hspace{100pt}+ TM_0C(U_s)\big(T+2C_1(M)C_0(M,1)\big),
\end{cases}
\end{equation}
where $C(U_s)$ only depends on the bound of the $H^{m+1}(\Omega)$-norm of $U_s$.
Note that the first two terms in the right-hand side of \eqref{eq7.e6} are less than $M_0/2$ by the assumption \eqref{asp7.1}, and both the second terms in the right hand side of \eqref{eq7.e6} approach $0$ when $T\rightarrow 0$; we thus can choose $T$ small enough again such that
\begin{equation}\label{eq7.11}
\aiminnorm{U^{k+1}}^2_{L^\infty(H^m)}\leq M_0,\hspace{6pt}\aiminnorm{U_t^{k+1}}^2_{L^\infty(H^{m-1})} \leq M_0,\hspace{6pt}U^{k+1}\text{ satisfies \eqref{eq7.e2}}.
\end{equation}
Now $U^{k+1}$ also satisfies \eqref{eq7.10}; hence we can continue our construction.
Let us emphasize that the choice of $T$ only depends on $M_0$, $M$, $U_s$ and is independent of $k$, therefore our iteration scheme can be conducted for all $k$, and we can construct the sequence $\{U^k\}$ as long as the starting point $U^0$ satisfies \eqref{eq7.10}, which holds true by the assumption \eqref{asp7.1}.

We now have an uniformly bounded sequence $\{U^k\}$ at hand, and the next point is to show that the sequence $\{U^k\}$ is Cauchy in $L^\infty(0,T;L^2(\Omega))$, which is almost achieved in the proof of Theorem \ref{thm6.2}. 
Let us write $W^{k+1}=U^{k+1}-U^k$; with $W^\epsilon,U^\epsilon,U^{\epsilon'},\widehat U^\epsilon,\widehat U^{\epsilon'},F^\epsilon,F^{\epsilon'}$ replaced by $W^{k+1},U^{k+1},U^{k},U^{k}+U_s,U^{k-1}+U_s,F^{U^{k}},F^{U^{k-1}}$ in \eqref{eq6.16}, proceeding exactly as for \eqref{eq6.17}-\eqref{eq6.18} and noticing that $W^{k+1}(0) = 0$, we obtain
\begin{equation}\label{eq7.12}
\begin{split}
\aiminnorm{ U^{k+1}-U^k }_{L^\infty(L^2)}^2 &\leq C_0(M,T)T\big( \aiminnorm{F^{U^k}-F^{U^{k-1}}}_{L^\infty(L^2)}^2 \\
&\hspace{20pt}+ 6 \aiminnorm{U^{k}}_{L^\infty(H^m)}^2 \aiminnorm{U^k-U^{k-1}}_{L^\infty(L^2)}^2\big).
\end{split}
\end{equation}
Using the explicit expression \eqref{eq7.6} for $F^U$ and the Sobolev embedding $H^m(\Omega)\subset L^\infty(\Omega)$, we estimate
\begin{equation}\label{eq7.13}
\begin{split}
\aiminnorm{F^{U^k}-F^{U^{k-1}}}_{L^\infty(L^2)}^2 &\leq C(\aiminnorm{U_s}_{L^\infty(\Omega)})\aiminnorm{U^k-U^{k-1}}_{L^\infty(L^2)}^2\\
&\leq C(\aiminnorm{U_s}_{H^m(\Omega)})\aiminnorm{U^k-U^{k-1}}_{L^\infty(L^2)}^2.
\end{split}
\end{equation}
Combining \eqref{eq7.12} and \eqref{eq7.13} and using the uniform boundedness of the $L^\infty(0,T; H^m(\Omega))$-norm of $U^k$, we obtain 
\begin{equation}\label{eq7.14}
\aiminnorm{ U^{k+1}-U^k }_{L^\infty(L^2)}^2 \leq C_0(M,T)T\big(C(\aiminnorm{U_s}_{H^m(\Omega)}) + 6M_0\big)\aiminnorm{U^k-U^{k-1}}_{L^\infty(L^2)}^2.
\end{equation}
Upon reducing $T$ again, we can assume that
\begin{equation}
C_0(M,T)T \big(C(\aiminnorm{U_s}_{H^m(\Omega)}) + 6M_0\big) \leq \frac{1}{4};
\end{equation}
then the inequality \eqref{eq7.14} implies that
\begin{equation}
\aiminnorm{ U^{k+1}-U^k }_{L^\infty(L^2)}\leq \frac{1}{2}\aiminnorm{U^k-U^{k-1}}_{L^\infty(L^2)}\leq\cdots\leq(\frac{1}{2})^k
\aiminnorm{U^1-U^0}_{L^\infty(L^2)}.
\end{equation}
Therefore \{$U^k$\} is Cauchy in $L^\infty(L^2)$; let $U$ be the limit of this sequence.
Note also that \{$U^k$\} is uniformly bounded in $L^\infty(H^m)$, so by $L^2-H^m$ interpolation,
the sequence \{$U^k$\} converges strongly in $L^\infty(H^{m-1})$ to $U\in L^\infty(H^m)$.
Similarly as for \eqref{eq6.19}, we can obtain that $\{U_t^k\}$ converges strongly in $L^\infty(H^{m-2})$ to a function $V\in L^\infty(H^{m-1})$.

Now passing to the limit in \eqref{eq7.9}, we obtain that $U$ solves \eqref{eq7.5} and $U_t=V$, and that $U$ belongs to $L^\infty(0,T;H^m(\Omega))$ and $U_t$ belongs to $L^\infty(0,T;H^{m-1}(\Omega))$. The uniqueness directly follows from \eqref{eq7.14}. This completes the proof.
\end{proof}
\begin{rmk}[Non-homogeneous boundary conditions]
With Remark \ref{rmk6.2}, the existence of a solution for the iterative scheme \eqref{eq7.9} associated with non-homogeneous boundary conditions can be obtained, and by passing to the limit, the nonlinear system \eqref{eq7.5} associated with non-homogeneous boundary conditions admits a unique solution; we omit the details here.
\end{rmk}

\begin{rmk}
After completing this article, we found that we can also use a finite difference method to prove the existence and uniqueness of the fully nonlinear SWE (i.e. Theorem \ref{thm7.1}) by observing that we have the energy estimates \eqref{eq5.19} for the corresponding boundary value problem (although slightly different), which is the one we only need for the finite difference method. We omit the details here.

The finite difference method has the advantages that we do not need the density theorems in Section \ref{sec-density} and the evolution semigroup technique. However, the evolution semigroup technique has its own advantage that it tells us how we lost two space derivatives for the well-posedness of the linear SWE (see Theorem \ref{thm6.1}), and that explains why we only have local well-posedness for the fully nonlinear SWE in some sense.
\end{rmk}

\appendix
\section{Preliminary results about semigroups and evolution families}\label{sec-semigroup}
This appendix collects some basic facts on the semigroups and evolution families and the characterization of their generators. The main references are the classical books by K. Yosida \cite{Yos80}, and by E. Hille and R.S. Phillips \cite{HP74}, and by A. Pazy \cite{Paz83} and the book by K.-J. Engel and R. Nagel \cite{EN00}.
\begin{defn}
A family $(S(t))_{t\geq 0}$ of bounded linear operators on a Banach space $X$ is called a strongly continuous (one-parameter) semigroup (or $\mathcal{C}_0$-semigroup) if it satisfies 
\begin{enumerate}[i)]
\item $S(0)=I,S(t+s)=S(t)S(s)$ for all $t,s\geq 0$;
\item $\xi_x : t\mapsto \xi_x(t):=S(t)x$ is continuous from $\mathbb{R}_+$ into $X$ for every $x\in X$.
\end{enumerate}
\end{defn}

\begin{prop}
For every strongly continuous semigroup $(S(t))_{t\geq 0}$, there exist constants $\omega\in\mathbb{R}$ and $M\geq 1$ such that
\begin{equation}\label{eqa.1}
\aiminnorm{S(t)}\leq Me^{\omega t}
\end{equation}
for all $t\geq 0$.
\end{prop}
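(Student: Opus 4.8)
The plan is the classical two-step argument: first bound $\|S(t)\|$ uniformly on a compact interval, then bootstrap to all of $\mathbb{R}_+$ using the semigroup law.

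First I would show that $M:=\sup_{0\le t\le 1}\|S(t)\|$ is finite. Fix $x\in X$; by property $ii)$ of the definition the map $t\mapsto S(t)x$ is continuous on the compact interval $[0,1]$, so its image is compact and in particular bounded, i.e. $\sup_{0\le t\le 1}\|S(t)x\|<\infty$. Since this holds for every $x\in X$ and each $S(t)$ is a bounded linear operator, the uniform boundedness principle (Banach--Steinhaus) gives $M<\infty$. Note $M\ge\|S(0)\|=\|I\|=1$, so we may set $\omega:=\log M\ge 0$, whence $M=e^{\omega}$ and $M^{s}\le M^{t}$ whenever $0\le s\le t$.

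Next, for arbitrary $t\ge 0$ I would write $t=n+r$ with $n\in\mathbb{N}$ (or $n=0$) and $r\in[0,1)$. Applying the identity $S(a+b)=S(a)S(b)$ repeatedly gives $S(t)=S(r)S(1)^{n}$, hence
\[
\|S(t)\|\le\|S(r)\|\,\|S(1)\|^{\,n}\le M\cdot M^{\,n}=M^{\,n+1}\le M\cdot M^{\,t}=M e^{\omega t},
\]
where the penultimate inequality uses $n\le t$ together with $M\ge 1$. This is exactly the asserted bound \eqref{eqa.1}, with $M\ge 1$ and $\omega\in\mathbb{R}$ (in fact $\omega\ge 0$).

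The only non-routine ingredient is the passage from strong continuity to the uniform bound on $[0,1]$: the pointwise bounds are immediate from continuity on a compact set, and the uniformity over $t$ is precisely the content of Banach--Steinhaus; everything else is an elementary manipulation of the semigroup law, so I do not anticipate any real difficulty.
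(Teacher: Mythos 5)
Your proof is correct and is exactly the classical argument found in the standard references the paper cites (Pazy, Engel--Nagel, Yosida): pointwise boundedness on $[0,1]$ from strong continuity plus Banach--Steinhaus gives a uniform bound $M$, and the semigroup law then yields the exponential estimate. The paper itself states this proposition without proof, deferring to those references, so there is no alternative route to compare against; your write-up fills in precisely what the paper takes for granted.
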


\begin{defn}\label{defn2.2}
A strongly continuous semigroup is called \emph{quasi-contraction} if we can take $M=1$ in \eqref{eqa.1}, and  called \emph{bounded} if $\omega=0$, and called \emph{contraction} if $\omega=0$ and $M=1$ is possible. 
\end{defn}

\begin{defn}
The generator $A:\mathcal{D}(A)\subset X\mapsto X$ of a strongly continuous semigroup $(S(t))_{t\geq 0}$ on a Banach space $X$ is the operator
\begin{equation}\nonumber
Ax:=\dot{\xi}_x(0)=\lim_{h\downarrow 0}\frac{1}{h}(S(h)x -x)
\end{equation}
defined for every $x$ in its domain
\begin{equation}\nonumber
\mathcal{D}(A):=\{x\in X\,:\,t\mapsto\xi_x(t) \text{ is right differentiable in }t\text{ at }t=0\}.
\end{equation}
\end{defn}

Note that if $\xi_x(t)$ is right differentiable in $t$ at $t=0$, it is also differentiable at $t$, for any $t\geq 0$.

\begin{thm}[Hille-Yosida theorem]\label{thm2.1}
Let $(A,\mathcal{D}(A))$ be a positive operator on a Hilbert space $H$ such that $\omega+A$ is surjective for some $\omega>0$. Then $-A$ generates a contraction semigroup.
\end{thm}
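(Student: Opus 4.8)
The plan is to recognize this as the Lumer--Phillips form of the generation theorem and to verify its hypotheses one at a time; here ``positive'' means $\langle Au,u\rangle\ge 0$ for all $u\in\mathcal D(A)$, i.e.\ $-A$ is dissipative, and $\langle\cdot,\cdot\rangle,\|\cdot\|$ are those of $H$. First I would record the resolvent estimate: for $\lambda>0$ and $u\in\mathcal D(A)$, Cauchy--Schwarz together with positivity gives $\|(\lambda+A)u\|\,\|u\|\ge\langle(\lambda+A)u,u\rangle=\lambda\|u\|^2+\langle Au,u\rangle\ge\lambda\|u\|^2$, so $\|(\lambda+A)u\|\ge\lambda\|u\|$; in particular $\lambda+A$ is injective and, wherever it is also onto $H$, its inverse is bounded with norm $\le 1/\lambda$.

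Next I would upgrade the range condition from the single value $\omega$ to every $\lambda>0$. Let $\Lambda=\{\lambda>0:\lambda+A\text{ is onto }H\}$, which contains $\omega$. If $\lambda\in\Lambda$, then for $|\mu-\lambda|<1/\|(\lambda+A)^{-1}\|$ the operator $I+(\mu-\lambda)(\lambda+A)^{-1}$ is invertible by a Neumann series, whence $\mu+A=(\lambda+A)\bigl(I+(\mu-\lambda)(\lambda+A)^{-1}\bigr)$ is onto; since $\|(\lambda+A)^{-1}\|\le 1/\lambda$ this covers all $\mu\in(0,2\lambda)$. Thus $\Lambda$ is open and, starting from $\omega$ and repeatedly doubling, $\Lambda=(0,\infty)$. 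In particular $(1+A)^{-1}$ is bounded and everywhere defined, so $A$ is closed. Then I would check density of $\mathcal D(A)$: if $y\perp\mathcal D(A)$, choose $u\in\mathcal D(A)$ with $(1+A)u=y$; then $0=\langle y,u\rangle=\|u\|^2+\langle Au,u\rangle\ge\|u\|^2$, so $u=0$ and $y=0$. With dissipativity, closedness, density, and $(0,\infty)\subset\rho(-A)$ together with $\|(\lambda+A)^{-1}\|\le1/\lambda$ in hand, I would invoke the Hille--Yosida/Lumer--Phillips generation theorem (see \cite{Yos80,Paz83,EN00}) to conclude that $-A$ generates a contraction semigroup. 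A self-contained alternative is the Yosida approximation: set $A_\lambda=\lambda A(\lambda+A)^{-1}=\lambda I-\lambda^2(\lambda+A)^{-1}$, note these are bounded and again positive so that $\|e^{-tA_\lambda}\|\le1$, prove $A_\lambda u\to Au$ for $u\in\mathcal D(A)$, and use
\begin{equation}\nonumber
e^{-tA_\lambda}u-e^{-tA_\mu}u=\int_0^t\frac{d}{ds}\bigl(e^{-(t-s)A_\mu}e^{-sA_\lambda}u\bigr)\,ds
\end{equation}
with the uniform contraction bound to obtain convergence of $e^{-tA_\lambda}u$, uniformly on compact $t$-intervals, to a contraction semigroup $S(t)$ whose generator is then identified with $-A$ via the resolvent.

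The hard part will be the second step --- promoting surjectivity of $\omega+A$ to surjectivity of $\lambda+A$ for all $\lambda>0$ --- because it is precisely what makes the resolvent available near $\lambda=0$ and thereby drives the density of the domain, the closedness, and the contraction estimate; once that is done, everything else is routine. (In the Yosida-approximation variant the delicate point instead becomes the strong convergence of $e^{-tA_\lambda}$, but there the uniform bound $\|e^{-tA_\lambda}\|\le1$ reduces it to a standard $3\varepsilon$-type argument.)
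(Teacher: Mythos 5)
The paper itself gives no proof of this statement: Appendix~\ref{sec-semigroup} simply records it as a known generation theorem and defers to the classical references \cite{Yos80,HP74,Paz83,EN00}. Your proposal supplies a correct, self-contained proof, and it identifies exactly the nonroutine points that a careful reader would want to see spelled out, since the hypothesis deliberately does \emph{not} assume $A$ closed or densely defined and only assumes surjectivity of $\omega+A$ for a \emph{single} $\omega>0$. The resolvent estimate $\|(\lambda+A)u\|\ge\lambda\|u\|$ from positivity is right; the Neumann-series bootstrap showing that the set $\Lambda$ of good $\lambda$ is open and, because $\|(\lambda+A)^{-1}\|\le1/\lambda$, that $\lambda\in\Lambda$ forces $(0,2\lambda)\subset\Lambda$, correctly propagates surjectivity from the one given $\omega$ to all of $(0,\infty)$; closedness of $A$ then follows from the bounded everywhere-defined inverse, and the orthogonality argument for density of $\mathcal D(A)$ is the standard one and is carried out correctly. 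With $-A$ dissipative, $A$ closed and densely defined, and $(0,\infty)\subset\rho(-A)$ with $\|\lambda(\lambda+A)^{-1}\|\le1$, invoking Lumer--Phillips (or, as you note, running the Yosida-approximation argument directly) gives the contraction semigroup. As a minor stylistic point, the factorization is slightly more transparent in the order $\mu+A=\bigl(I+(\mu-\lambda)(\lambda+A)^{-1}\bigr)(\lambda+A)$, since then surjectivity is the composition of the bijection $\lambda+A:\mathcal D(A)\to H$ with a Neumann-invertible operator on $H$; but your order is also correct once one checks, as you implicitly do, that the intermediate vector lands in $\mathcal D(A)$.
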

We recall that on a Hilbert space, the linear operator $A$ is called \emph{positive} if $\aimininner{Ax}{x}\geq 0$ for all $x\in\mathcal{D}(A)$.



\begin{thm}[Uniqueness Theorem]\label{thm2.2}
Let $(A,\mathcal{D}(A))$ be a closed, densely defined operator on a Banach space $X$, and let $Y$ be a subspace of $X$ which is continuously embedded in $X$ (in symbols: $Y\hookrightarrow X$). The part of $A$ in $Y$ is the operator $A_|$ defined by $A_|y:=Ay$ with domain $\mathcal{D}(A_|)=\{y\in\mathcal{D}(A)\cap Y: Ay\in Y\}$.
Suppose that $(A,\mathcal{D}(A))$ generates a strongly continuous semigroup $(S(t))_{t\geq 0}$ on $X$, and $(A_|,\mathcal{D}(A_|))$ also generates a strongly continuous semigroup $(R(t))_{t\geq 0}$ on $Y$. Then 
\begin{equation}\label{eqa.2}
	S(t)y=R(t)y
\end{equation}
holds for all $y\in Y$ and $t\geq 0$. Furthermore, the subspace $Y$ is $A$-admissible, i.e. $Y$ is an invariant subspace of $S(t),\,t\geq 0$, and the restriction of $S(t)$ to $Y$, which is $R(t)$, is a strongly continuous semigroup on $Y$.
\end{thm}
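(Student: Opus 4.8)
The plan is to identify the two semigroups through the resolvents of their generators and then invoke the injectivity of the vector-valued Laplace transform. First I would fix a real number $\lambda$ large enough that it lies simultaneously in the resolvent set $\rho(A)$ and in the resolvent set $\rho(A_|)$; this is possible because each generator of a strongly continuous semigroup has a resolvent set containing a half-plane $\{\operatorname{Re}\mu>\omega\}$, so one takes $\lambda$ real and larger than both growth bounds. For such $\lambda$ one has the Laplace representations
\[
R(\lambda,A)x=\int_0^\infty e^{-\lambda t}S(t)x\,dt\quad(x\in X),\qquad R(\lambda,A_|)y=\int_0^\infty e^{-\lambda t}R(t)y\,dt\quad(y\in Y).
\]

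The key step is the elementary identity that $R(\lambda,A)$ restricted to $Y$ coincides with $R(\lambda,A_|)$ for these $\lambda$. Indeed, for $y\in Y$ put $z:=R(\lambda,A_|)y$; then $z\in\mathcal{D}(A_|)\subset\mathcal{D}(A)\cap Y$ and $(\lambda-A)z=(\lambda-A_|)z=y$, so, since $\lambda\in\rho(A)$ makes $\lambda-A$ injective, $z=R(\lambda,A)y$. Consequently, for every $y\in Y$,
\[
\int_0^\infty e^{-\lambda t}\bigl(S(t)y-R(t)y\bigr)\,dt=R(\lambda,A)y-R(\lambda,A_|)y=0,
\]
where the integrand is read as a continuous, exponentially bounded $X$-valued map, which is legitimate since $Y\hookrightarrow X$. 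As this holds for all large real $\lambda$, hence in particular for $\lambda=\lambda_0,\lambda_0+1,\lambda_0+2,\dots$, the injectivity of the Laplace transform for such functions (applied in $X$) gives $S(t)y=R(t)y$ for all $t\ge0$ and all $y\in Y$, the case $t=0$ being trivial. Alternatively one can bypass Laplace transforms entirely and use the exponential formula $S(t)x=\lim_{n\to\infty}\bigl[\tfrac{n}{t}R(\tfrac{n}{t},A)\bigr]^n x$ together with the iterated identity $R(\lambda,A)^n y=R(\lambda,A_|)^n y$, noting that convergence in $Y$ forces convergence in $X$.

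The final assertion then follows at once: since $R(t)$ maps $Y$ into $Y$ and $S(t)y=R(t)y$ for $y\in Y$, the subspace $Y$ is invariant under every $S(t)$, and the restriction of $S(t)$ to $Y$ equals $R(t)$, which by hypothesis is a strongly continuous semigroup on $Y$; thus $Y$ is $A$-admissible.

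I expect the only real, and still mild, obstacle to be the bookkeeping between the two spaces: one must ensure a single real $\lambda$ serves both operators, and that annihilating the $X$-valued Laplace transform is permissible even though $R(t)y$ a priori lives in the smaller space $Y$. Everything else is routine semigroup theory.
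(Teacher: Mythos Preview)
Your argument is correct, but it takes a different route from the paper. The paper simply observes that for $y\in\mathcal{D}(A_|)$ both $t\mapsto S(t)y$ and $t\mapsto R(t)y$ are classical solutions in $X$ of the abstract Cauchy problem $\dot u=Au$, $u(0)=y$, and invokes the uniqueness of such solutions; density of $\mathcal{D}(A_|)$ in $Y$ then extends the identity to all $y\in Y$. You instead match the resolvents, proving $R(\lambda,A)\big|_Y=R(\lambda,A_|)$ for large real $\lambda$, and then separate the semigroups via the injectivity of the Laplace transform (or, alternatively, the exponential formula). Both arguments are standard and equally valid; the paper's is a one-line appeal to Cauchy-problem uniqueness but implicitly requires a density step, whereas your resolvent approach works directly for every $y\in Y$ and makes the role of the embedding $Y\hookrightarrow X$ more explicit. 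The bookkeeping concerns you flag (a common $\lambda$, reading the $Y$-valued orbit as an $X$-valued function) are handled correctly.
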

\begin{proof}
The identity \eqref{eqa.2} immediately follows from the uniqueness of the following Cauchy problem
\begin{equation}\nonumber
\begin{cases}
	\dot{u}(t) = Au(t),\,\forall\, t\geq 0, \\
	u(0)=y\in Y;
\end{cases}
\end{equation}
and that $Y$ is $A$-admissible follows from \eqref{eqa.2} and the assumption.	

\end{proof}

\begin{thm}[Bounded Perturbation Theorem I]\label{thm2.3}
Let $(A,\mathcal{D}(A))$ be the infinitesimal generator of a strongly continuous semigroup $(S(t))_{t\geq 0}$ on a Banach space $X$ satisfying
$$\aiminnorm{S(t)}\leq M_0 e^{\omega t},\,\forall\, t\geq 0, $$
where $\omega\in \mathbb{R}, M_0\geq 1$. If $B\in \mathcal{L}(X)$, then
$C:=A+B$, with $\mathcal{D}(C):=\mathcal{D}(A)$
generates a strongly continuous semigroup $(R(t))_{t\geq 0}$ satisfying
\begin{equation}\nonumber
\aiminnorm{R(t)}\leq M_0 e^{(\omega + M_0\aiminnorm{B})t},\,\forall\, t\geq 0.
\end{equation}
\end{thm}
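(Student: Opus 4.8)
The plan is to construct the perturbed semigroup explicitly by the Dyson--Phillips (variation of constants) series and then verify that its generator is precisely $(A+B,\mathcal{D}(A))$. Set $R_0(t)=S(t)$ and define inductively
\begin{equation}\nonumber
R_{n+1}(t)x=\int_0^t S(t-s)\,B\,R_n(s)x\,ds,\qquad x\in X,\ t\geq 0,
\end{equation}
each $R_n(t)$ being bounded on $X$ since the integrand is strongly continuous in $s$. A straightforward induction on $n$, using $\aiminnorm{S(\tau)}\leq M_0 e^{\omega\tau}$ (so that $\aiminnorm{S(t-s)}\,\aiminnorm{S(s)}\leq M_0^2 e^{\omega t}$) and integrating the resulting power of $s$, yields
\begin{equation}\nonumber
\aiminnorm{R_n(t)}\leq M_0 e^{\omega t}\,\frac{(M_0\aiminnorm{B}\,t)^n}{n!},\qquad t\geq 0.
\end{equation}
Hence $R(t):=\sum_{n=0}^\infty R_n(t)$ converges in $\mathcal{L}(X)$, uniformly for $t$ in compact sets, with $\aiminnorm{R(t)}\leq M_0 e^{(\omega+M_0\aiminnorm{B})t}$, which is the asserted bound; moreover $R(t)$ solves the integral equation $R(t)x=S(t)x+\int_0^t S(t-s)BR(s)x\,ds$ for all $x\in X$.

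Next I would check that $(R(t))_{t\geq 0}$ is a strongly continuous semigroup. Strong continuity of $t\mapsto R(t)x$ and $R(0)=I$ follow from the uniform convergence of the series together with the strong continuity of $(S(t))$. For the identity $R(t+s)=R(t)R(s)$ one fixes $s\geq 0$ and observes that both $t\mapsto R(t+s)x$ and $t\mapsto R(t)R(s)x$ satisfy the same Volterra integral equation with initial value $R(s)x$ at $t=0$ (using the semigroup property of $S$ and Fubini's theorem to recombine the double integral); uniqueness for that equation, e.g.\ via Gronwall's lemma, gives the claim. Alternatively the relation follows term by term from the convolution identity $\sum_{j+k=n}R_j(t)R_k(s)=R_n(t+s)$.

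It remains to identify the generator $G$ of $(R(t))_{t\geq 0}$ with $A+B$ on $\mathcal{D}(A)$. For $x\in\mathcal{D}(A)$, differentiating the integral equation at $t=0$ shows $\tfrac1t(R(t)x-x)\to Ax+Bx$ as $t\downarrow0$, since $t\mapsto S(t)x$ is differentiable at $0$ with derivative $Ax$ and the integral term contributes $Bx$ (its integrand is continuous and equals $Bx$ at $s=0$); thus $x\in\mathcal{D}(G)$ and $Gx=Ax+Bx$, i.e.\ $G\supseteq A+B$. For the converse I would run the construction once more, now perturbing $(A+B,\mathcal{D}(A))$ — which generates the $\mathcal{C}_0$-semigroup $(R(t))$ — by the bounded operator $-B$; this produces a $\mathcal{C}_0$-semigroup whose generator extends $(A+B)+(-B)=A$, hence equals $A$ by the elementary fact that a generator cannot properly extend another generator (if $G_1\subseteq G_2$ are generators, then for $\lambda$ large $\lambda-G_1$ and $\lambda-G_2$ are both bijective onto $X$ and agree on $\mathcal{D}(G_1)$, forcing $\mathcal{D}(G_1)=\mathcal{D}(G_2)$). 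Consequently $G$ cannot properly extend $A+B$ either, so $\mathcal{D}(G)=\mathcal{D}(A)$ and $G=A+B$.

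I expect this last step — pinning the domain of $G$ down to exactly $\mathcal{D}(A)$ rather than a larger space — to be the only delicate point. An alternative to the symmetry argument is a direct resolvent computation: for $\text{Re}\,\lambda>\omega+M_0\aiminnorm{B}$ the Neumann series $\sum_{n\geq0}\big(B(\lambda-A)^{-1}\big)^n$ converges in $\mathcal{L}(X)$, so $\lambda-(A+B)$ maps $\mathcal{D}(A)$ bijectively onto $X$ with bounded inverse $(\lambda-A)^{-1}\sum_{n\geq0}\big(B(\lambda-A)^{-1}\big)^n$, and a term-by-term comparison identifies this operator with the Laplace transform $\int_0^\infty e^{-\lambda t}R(t)\,dt=(\lambda-G)^{-1}$ of the constructed semigroup; equality of resolvents then forces $G=A+B$ with $\mathcal{D}(G)=\mathcal{D}(A)$, and the norm estimate recorded above completes the proof.
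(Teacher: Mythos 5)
The paper states Theorem~\ref{thm2.3} in Appendix~\ref{sec-semigroup} as a known fact, citing Pazy \cite{Paz83} and Engel--Nagel \cite{EN00}, and gives no proof; so there is no in-paper argument to compare against. Your Dyson--Phillips (variation of constants) construction is the standard textbook proof of this theorem and is correct: the inductive bound $\aiminnorm{R_n(t)}\leq M_0 e^{\omega t}(M_0\aiminnorm{B}t)^n/n!$ yields convergence of the series and the stated growth estimate $M_0 e^{(\omega+M_0\aiminnorm{B})t}$, the Volterra/Gronwall (or term-by-term convolution) argument gives the semigroup law, and either of your two closing arguments pins the generator down to $A+B$ on $\mathcal{D}(A)$. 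One phrase is worth tightening: when you describe the second application of the construction as perturbing $(A+B,\mathcal{D}(A))$, which generates $(R(t))$, you are momentarily assuming the conclusion --- at that stage you have only shown that the generator $G$ of $R(t)$ extends $A+B$. What the bootstrap actually delivers is a semigroup with generator $\tilde G\supseteq G-B\supseteq A$; since $A$ and $\tilde G$ are both generators and one extends the other, $\tilde G=A$, whence $\mathcal{D}(G)\subseteq\mathcal{D}(\tilde G)=\mathcal{D}(A)$ and the domains coincide. Your resolvent/Laplace-transform alternative avoids this phrasing issue altogether and is the cleanest way to conclude $G=A+B$ with $\mathcal{D}(G)=\mathcal{D}(A)$.
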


The following results are taken from \cite[Chapter 5]{Paz83}.
\begin{defn}
A two parameter family of bounded linear operators $W(t,s),0\leq s\leq t\leq T$, on a Banach space $X$ is called an \emph{evolution system} if the two following conditions are satisfied:
\begin{enumerate}[i)]
\item $W(s,s)=I, W(t,r)W(r,s) = W(t,s)$ for $0\leq s\leq r\leq t\leq T$;
\item $(t,s)\mapsto W(t,s)$ is strongly continuous for $0\leq s\leq t\leq T$.
\end{enumerate}
\end{defn}

Let $I$ be the interval $[0,T]$, and $(A(t),\mathcal{D}(A(t))_{t\in I}$ be the family of infinitesimal generators of strongly continuous semigroups $S_t(s), s\geq 0,$ on a Banach space $X$, and let $Y$ be a Banach space which is densely and continuously embedded into $X$. The following stability definition appeared in \cite{Kat70,Kat73}:
\begin{defn}[Kato-stability and Kato-condition]\label{def-kato-stability}
We say that the family $\{A(t)\}_{t\in I}$ is \emph{Kato-stable}, if there exist constants $M\geq 1$ and $\omega\in\mathbb{R}$ such that
\begin{equation}\nonumber
\aiminnorm{ \Pi_{j=1}^n e^{s_jA(t_j)}} \leq Me^{\omega\sum_{j=1}^n s_j},
\end{equation}
holds for every time-ordered sequence $(t_1,\cdots,t_n)$ in $I$ and $s_j\geq 0$;
and we say that the family $\{A(t)\}_{t\in I}$ satisfies the \emph{Kato-condition} on $I$ if the following conditions are satisfied:
\begin{enumerate}[i)]
\item $\{A(t)\}_{t\in I}$ is Kato-stable in $X$.
\item $Y$ is $A(t)$-admissible for all $t\in I$, and the family $\{A(t)|_Y\}$ of the part of $A(t)$ in $Y$ is Kato-stable in $Y$.
\item $Y\subset\mathcal{D}(A(t))$ holds for all $t\in I$, and for all $t\in I$, $A(t)$ is a bounded operator from $Y$ into $X$ and the mapping $t\mapsto A(t)$ is continuous in the $\mathcal{L}(Y,X)$ norm $\aiminnorm{\cdot}_{Y\rightarrow X}$.
\end{enumerate}
\end{defn}
\begin{rmk}\label{rmk2.1}
If for all $t\in I$, $A(t)$ is the infinitesimal generator of a \emph{quasi-contraction} (see Definition \ref{defn2.2}) semigroup $R_t(s)$ satisfying $\aiminnorm{R_t(s)}\leq e^{\omega s}$, then the family $\{A(t)\}_{t\in I}$ is clearly \emph{Kato-stable} with constants $M=1$ and $\omega$.
\end{rmk}

\begin{thm}[Bounded Perturbation Theorem II]\label{thm2.5}
Let $\{A(t)\}_{t\in I}$ be a \emph{Kato-stable} family of infinitesimal generators with constants $M$ and $\omega$. Let $\{B(t)\}_{t\in I}$ be bounded linear operators on $X$. If $\aiminnorm{B(t)}\leq K$ for all $t\in I$, then $\{A(t)+B(t)\}_{t\in I}$ is a \emph{Kato-stable} family of infinitesimal generators with constants $M$ and $\omega+KM$.
\end{thm}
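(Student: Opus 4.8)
The plan is to first promote each $A(t)+B(t)$ to an infinitesimal generator, and then to obtain the stability estimate by applying a Duhamel identity and Gronwall's lemma to the \emph{entire} finite product at once; estimating factor by factor would create a spurious power $M^{n}$, so the product must be treated simultaneously against a single evolution operator assembled from the $A(t)$'s. For the first part: taking $n=1$ in the Kato-stability of $\{A(t)\}_{t\in I}$ gives $\aiminnorm{e^{sA(t)}}\le Me^{\omega s}$ for all $t\in I$, $s\ge0$, with $M\ge1$; since each $B(t)\in\mathcal L(X)$ with $\aiminnorm{B(t)}\le K$, Theorem~\ref{thm2.3} (Bounded Perturbation Theorem~I) applies with $M_{0}=M$ and shows that $C(t):=A(t)+B(t)$, with domain $\mathcal D(A(t))$, generates a strongly continuous semigroup $T_{t}(s):=e^{sC(t)}$ with $\aiminnorm{T_{t}(s)}\le Me^{(\omega+MK)s}$. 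In particular the products $\prod_{j=1}^{n}T_{t_{j}}(s_{j})$ are well defined, and it remains only to estimate their norms.

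Fix a time-ordered sequence $(t_{1},\dots,t_{n})$ in $I$ and numbers $s_{1},\dots,s_{n}\ge0$; set $\tau=s_{1}+\dots+s_{n}$ and partition $[0,\tau]$ into consecutive sub-intervals of lengths $s_{n},\dots,s_{1}$. Define the piecewise-constant operators $\mathcal A(r)=A(t_{j})$, $\mathcal B(r)=B(t_{j})$ on the sub-interval associated with $s_{j}$, and let $\Psi(r,\rho)$, $0\le\rho\le r\le\tau$, be the operator obtained by composing the matching pieces of the semigroups $e^{\,\cdot\,A(t_{j})}$. Then $\Psi(r,\rho)$ is a finite product $e^{\sigma_{a}A(t_{a})}e^{s_{a+1}A(t_{a+1})}\cdots e^{\sigma_{b}A(t_{b})}$ whose index sequence $(t_{a},\dots,t_{b})$ is a \emph{contiguous} subsequence of $(t_{1},\dots,t_{n})$, hence itself time-ordered; so the Kato-stability of $\{A(t)\}_{t\in I}$ gives $\aiminnorm{\Psi(r,\rho)}\le Me^{\omega(r-\rho)}$, while $\Psi(\tau,0)=\prod_{j=1}^{n}e^{s_{j}A(t_{j})}$. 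Composing instead the corresponding pieces of the $T_{t_{j}}$'s yields a two-parameter family $\widetilde\Psi(r,\rho)$ with $\widetilde\Psi(\tau,0)=\prod_{j=1}^{n}T_{t_{j}}(s_{j})$, and $r\mapsto\aiminnorm{\widetilde\Psi(r,0)}$ is bounded on $[0,\tau]$ as a product of finitely many semigroup operators.

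Applying the scalar-parameter variation-of-constants identity $e^{tC}=e^{tA}+\int_{0}^{t}e^{(t-\sigma)A}Be^{\sigma C}\,d\sigma$ (with $C=A+B$) on each sub-interval and chaining the relations across sub-intervals produces
\[
\widetilde\Psi(r,0)=\Psi(r,0)+\int_{0}^{r}\Psi(r,\rho)\,\mathcal B(\rho)\,\widetilde\Psi(\rho,0)\,d\rho ,\qquad 0\le r\le\tau .
\]
With $\aiminnorm{\Psi(r,\rho)}\le Me^{\omega(r-\rho)}$ and $\aiminnorm{\mathcal B(\rho)}\le K$, the function $h(r)=e^{-\omega r}\aiminnorm{\widetilde\Psi(r,0)}$ satisfies $h(r)\le M+MK\int_{0}^{r}h(\rho)\,d\rho$, so Gronwall's lemma gives $h(r)\le Me^{MKr}$, i.e. $\aiminnorm{\widetilde\Psi(r,0)}\le Me^{(\omega+MK)r}$. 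Evaluating at $r=\tau$ yields $\aiminnorm{\prod_{j=1}^{n}T_{t_{j}}(s_{j})}\le Me^{(\omega+KM)\sum_{j}s_{j}}$, which is precisely the Kato-stability of $\{A(t)+B(t)\}_{t\in I}$ with constants $M$ and $\omega+KM$.

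The one point needing care is structural: the product must be Duhamelized against the \emph{single} evolution operator $\Psi$, so that only one factor of $M$ survives — this is exactly where the observation that contiguous subsequences of a time-ordered sequence remain time-ordered is used. The justification of the displayed integral identity when $\mathcal A$ is unbounded is routine, reducing to the scalar Duhamel formula on each sub-interval, valid on the dense set of smooth vectors and extended by continuity. (Alternatively, one may expand the whole product into its Duhamel series in $\mathcal B$ and resum: the insertions in the $j$-th block fill a simplex of volume $s_{j}^{p_{j}}/p_{j}!$, each maximal string of adjacent $A$-pieces contributes one factor $M$, and $\sum_{p_{1},\dots,p_{n}\ge0}M^{1+\sum p_{j}}K^{\sum p_{j}}\prod_{j}s_{j}^{p_{j}}/p_{j}!=Me^{MK\tau}$, giving the same bound.)
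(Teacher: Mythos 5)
Your proof is correct. Note, however, that the paper does not prove Theorem~\ref{thm2.5} at all: the appendix explicitly states that these results are taken from Chapter~5 of Pazy \cite{Paz83}, and the theorem is stated there without proof. The argument you give is therefore not "the same as" or "different from" a proof in the paper, but can usefully be contrasted with the standard proof in Pazy, which is the implicit reference.

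Pazy's argument runs entirely in the resolvent picture. It first characterizes Kato-stability with constants $M,\omega$ by the estimate $\big\|\prod_{j=1}^{k}R(\lambda;A(t_{j}))\big\|\le M(\lambda-\omega)^{-k}$ for $\lambda>\omega$ and time-ordered products, then writes $R(\lambda;A(t)+B(t))=R(\lambda;A(t))\sum_{n\ge0}\big(B(t)R(\lambda;A(t))\big)^{n}$ and expands the product $\prod_{j}R(\lambda;A(t_{j})+B(t_{j}))$ into its Neumann series. Each term is an alternating word in resolvents and $B$'s; splitting it at each $B$-insertion into maximal runs of resolvents (each a time-ordered product, hence bounded by $M(\lambda-\omega)^{-(\text{run length})}$) and resumming the geometric series gives the bound $M(\lambda-\omega-KM)^{-k}$, i.e., stability with constants $M$ and $\omega+KM$. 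Your proof is the precise time-domain (Laplace-transform dual) of this: the Neumann series in $B R(\lambda;A)$ corresponds to the Duhamel series in $\mathcal B$, the maximal runs of resolvents between $B$-insertions correspond to the factors $\Psi(\cdot,\cdot)$ between insertions, and your observation that contiguous subwords of a time-ordered word are time-ordered is exactly the same observation that controls each run in Pazy's expansion. You replace the explicit resummation with a single Gronwall inequality, which is cleaner and avoids introducing the resolvent characterization of stability as an intermediate step. The one place where your write-up is slightly terser than it should be is the chaining of the single-block Duhamel identities across sub-intervals into the global identity $\widetilde\Psi(r,0)=\Psi(r,0)+\int_{0}^{r}\Psi(r,\rho)\mathcal B(\rho)\widetilde\Psi(\rho,0)\,d\rho$; this does hold, and a short induction on the number of blocks (using $\Psi(r,\rho)\Psi(\rho,0)=\Psi(r,0)$ and the semigroup law at a splicing point interior to a block) would make it airtight. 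With that spelled out, the argument is complete and correct, and it trades Pazy's algebra of resolvents for a somewhat more transparent evolution-equation computation.
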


\begin{thm}\label{thm2.6}
If the family $\{A(t)\}_{t\in I}$ satisfies the \emph{Kato-condition} (see Definition \ref{def-kato-stability}) then there exists a unique evolution system $W(t,s), 0\leq s\leq t\leq T$, in $X$ satisfying
\begin{equation}\tag{$E_1$}
\aiminnorm{ W(t,s) } \leq M e^{\omega(t-s)},\,\forall\, 0\leq s\leq t\leq T,
\end{equation}
\begin{equation}\tag{$E_2$}
\partial^+_t W(t,s)v\big|_{t=s} = A(s)v,\,\forall\, v\in Y, 0\leq s\leq T,
\end{equation}
\begin{equation}\tag{$E_3$}
\partial_s W(t,s)v = -W(t,s)A(s)v,\,\forall\, v\in Y, 0\leq s\leq t\leq T
\end{equation}
Here the derivaties  in ($E_2$) and ($E_3$) are in the strong sense in $X$ and  in ($E_2$) the derivative is from the right.
\end{thm}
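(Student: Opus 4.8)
The plan is to follow the classical construction of Kato (see \cite[Chapter 5]{Paz83}) based on a piecewise-constant-in-time approximation of the generator family. First I would partition $I=[0,T]$ by the points $t_j^{(n)}=jT/n$ and set $A_n(t):=A(t_j^{(n)})$ for $t\in[t_j^{(n)},t_{j+1}^{(n)})$. On each subinterval the constant operator $A(t_j^{(n)})$ generates a strongly continuous semigroup, so one defines the approximate evolution operators $W_n(t,s)$ by composing these semigroups over the subintervals meeting $[s,t]$ (interpolating at the two endpoints). By construction $W_n(s,s)=I$, $W_n(t,r)W_n(r,s)=W_n(t,s)$, the map $(t,s)\mapsto W_n(t,s)$ is strongly continuous, and for $v\in Y$ one has the (piecewise, hence a.e.) differential identities $\partial_r W_n(r,s)v=A_n(r)W_n(r,s)v$ and $\partial_s W_n(t,s)v=-W_n(t,s)A_n(s)v$.

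Next I would use the Kato-condition. Its first part gives the uniform bound $\|W_n(t,s)\|_{\mathcal{L}(X)}\le Me^{\omega(t-s)}$; its second part ($Y$ is $A(t)$-admissible and $\{A(t)|_Y\}$ is Kato-stable in $Y$) gives that each $W_n(t,s)$ leaves $Y$ invariant with a uniform bound $\|W_n(t,s)\|_{\mathcal{L}(Y)}\le \widetilde M e^{\widetilde\omega(t-s)}$. The crucial step is then the ``fundamental identity'': for $v\in Y$,
\[
\big(W_n(t,s)-W_m(t,s)\big)v=\int_s^t W_n(t,r)\big(A_n(r)-A_m(r)\big)W_m(r,s)v\,dr,
\]
obtained by integrating $\partial_r\big(W_n(t,r)W_m(r,s)v\big)$ over $[s,t]$. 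Since $W_m(r,s)v$ stays bounded in $Y$, $W_n(t,r)$ is bounded in $\mathcal{L}(X)$, and $\|A_n(r)-A_m(r)\|_{\mathcal{L}(Y,X)}\to 0$ uniformly in $r$ by the third part of the Kato-condition (continuity, hence uniform continuity on $[0,T]$, of $t\mapsto A(t)$ in $\mathcal{L}(Y,X)$), the right-hand side tends to $0$ uniformly for $0\le s\le t\le T$. Hence $\{W_n(t,s)v\}_n$ is Cauchy in $X$ for each $v\in Y$, and by the uniform $X$-bound together with $\overline{Y}=X$ the operators $W_n(t,s)$ converge strongly on all of $X$ to operators $W(t,s)$, uniformly on $\{0\le s\le t\le T\}$. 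Passing to the limit in the elementary identities shows that $W$ is an evolution system inheriting the bound $(E_1)$.

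It remains to verify $(E_2)$, $(E_3)$ and uniqueness. For $v\in Y$ I would integrate the differential identities for $W_n$,
\[
W_n(t,s)v=v+\int_s^t A_n(r)W_n(r,s)v\,dr=v-\int_s^t W_n(t,\sigma)A_n(\sigma)v\,d\sigma,
\]
and pass to the limit; for the first expression one also needs $W_n(r,s)v\to W(r,s)v$ in $Y$, which follows from a similar but more delicate Cauchy estimate exploiting the $Y$-stability of the approximants, so that $W(r,s)v\in Y$ and $W(t,s)v=v+\int_s^t A(r)W(r,s)v\,dr=v-\int_s^t W(t,\sigma)A(\sigma)v\,d\sigma$ with norm-continuous integrands. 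Differentiating these gives $(E_3)$ and the right derivative $(E_2)$ at $t=s$; combining $(E_2)$ with the composition rule and $W(t,s)Y\subset Y$ further yields $\partial_t^+W(t,s)v=A(t)W(t,s)v$ for $v\in Y$. For uniqueness, given another evolution system $V$ satisfying $(E_1)$–$(E_3)$, fix $v\in Y$ and $0\le r\le t\le T$ and differentiate $s\mapsto V(t,s)W(s,r)v$ on $[r,t]$: by $(E_3)$ for $V$ (applied to the $Y$-valued argument $W(s,r)v$) and the forward equation just obtained for $W$, the derivative vanishes, so $V(t,r)v=W(t,r)v$ for $v\in Y$, hence for all $v\in X$.

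The hard part will be the convergence argument: making the fundamental identity rigorous despite $A_n$ being only piecewise constant (so the integration across breakpoints needs care), and especially obtaining the convergence $W_n(\cdot,\cdot)v\to W(\cdot,\cdot)v$ in the $Y$-norm, which is what keeps $A(r)W(r,s)v$ under control in the limit. This step requires combining the two Kato-stability estimates (in $X$ and in $Y$) with the $\mathcal{L}(Y,X)$-continuity of $t\mapsto A(t)$; everything else — the evolution-system axioms, the bound $(E_1)$, and differentiating the integral identities to recover $(E_2)$–$(E_3)$ — is then routine.
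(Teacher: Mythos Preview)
The paper does not prove this theorem itself; Theorem~\ref{thm2.6} is stated in Appendix~\ref{sec-semigroup} as a preliminary result quoted from \cite[Chapter~5]{Paz83}, without proof. Your outline is exactly the Kato construction presented in that reference (piecewise-constant approximation of $\{A(t)\}$, the fundamental identity, strong convergence on $Y$ extended to $X$ by density and the uniform bound), so your approach and the paper's cited source coincide.

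One remark on your plan: you flag the $Y$-norm convergence $W_n(\cdot,\cdot)v\to W(\cdot,\cdot)v$ as ``the hard part,'' needed to pass to the limit in the forward formula $W_n(t,s)v=v+\int_s^t A_n(r)W_n(r,s)v\,dr$. For the theorem \emph{as stated}---only $(E_1)$, $(E_2)$ at $t=s$, and $(E_3)$---this is unnecessary. Both $(E_2)$ and $(E_3)$ follow from the \emph{backward} integral formula alone (with the sign corrected: $W(t,s)v=v+\int_s^t W(t,\sigma)A(\sigma)v\,d\sigma$), and passing to the limit there only uses the $X$-convergence of $W_n(t,\sigma)$ together with $A(\sigma)v\in X$ and $\sigma\mapsto A(\sigma)v$ continuous in $X$ for $v\in Y$. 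The $Y$-invariance $W(t,s)Y\subset Y$ and the full forward equation $\partial_t^+W(t,s)v=A(t)W(t,s)v$ are not part of the conclusion here; they reappear as the extra hypotheses $(E_4)$--$(E_5)$ in Theorem~\ref{thm2.7}. So the step you single out as hardest can be dropped, and the remainder of your argument goes through.
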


\begin{thm}\label{thm2.7}
Let the family $\{A(t)\}_{t\in I}$ satisfies the \emph{Kato-condition} (see Definition \ref{def-kato-stability}) and let $W(t,s), 0\leq s\leq t\leq T$ be the evolution system given in Theorem \ref{thm2.6}. Suppose that $W(t,s)$ further satisfies both $(E_4)$ and $(E_5)$, where
\begin{equation}\tag{$E_4$}
W(t,s)Y\subset Y,\,\forall\, 0\leq s\leq t\leq T,
\end{equation}
\begin{equation}\tag{$E_5$}
(t,s)\mapsto W(t,s)v \text{ is continuous in }Y\text{ for all }v\in Y.
\end{equation}
Let $f\in\mathcal{C}([s,T]; Y)$, then for every $v\in Y, u(t)=W(t,s)v + \int_s^t W(t,r)f(r)dr$ is the unique $Y$-valued solution of the following initial value problem:
\begin{equation}\nonumber
\begin{cases}
\dfrac{du(t)}{dt}=A(t)u(t) + f(t),\,\forall\, 0\leq s\leq t\leq T, \\
u(s)=v;
\end{cases}
\end{equation}
and $u$ belongs to $\mathcal{C}([s,T]; Y) \cap \mathcal{C}^1([s,T]; X)$.
\end{thm}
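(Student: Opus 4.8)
The statement is the classical result on the inhomogeneous time-dependent Cauchy problem (see \cite[Chapter 5]{Paz83}), and the plan is to follow that line of argument, using only the properties $(E_1)$--$(E_5)$ of the evolution system $W(t,s)$ together with the Kato-condition. First I would check that $u(t)=W(t,s)v+\int_s^t W(t,r)f(r)\,dr$ is well-defined as an element of $\mathcal{C}([s,T];Y)$: by $(E_4)$ the integrand $W(t,r)f(r)$ lies in $Y$ for every $r\le t$, and by $(E_5)$, the continuity of $f$ in $Y$, and the uniform bound of $\{W(t,s)\}$ in $\mathcal{L}(Y)$ (which follows from $(E_4)$, $(E_5)$ and the uniform boundedness principle on the compact parameter set), the $Y$-valued Bochner integral makes sense, agrees with the $X$-valued one under $Y\hookrightarrow X$, and depends continuously on $t$; adding the $(E_5)$-continuous term $W(t,s)v$ gives $u\in\mathcal{C}([s,T];Y)$. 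In particular $u(t)\in Y\subset\mathcal{D}(A(t))$ for all $t$, so the right-hand side $A(t)u(t)+f(t)$ of the equation is meaningful.

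The core is to differentiate $u$ in $X$. I would first establish that for $v\in Y$ the map $t\mapsto W(t,s)v$ belongs to $\mathcal{C}^1([s,T];X)$ with $\frac{d}{dt}W(t,s)v=A(t)W(t,s)v$. For $t>s$ and $h>0$ the cocycle identity $W(t+h,s)=W(t+h,t)W(t,s)$ reduces the difference quotient to $h^{-1}\big(W(t+h,t)w-w\big)$ with $w:=W(t,s)v\in Y$ by $(E_4)$, and $(E_2)$ applied at the ``initial time'' $t$ shows this converges to $A(t)w$ as $h\downarrow 0$. The right-derivative $t\mapsto A(t)W(t,s)v$ is continuous in $X$: split $A(t')W(t',s)v-A(t)W(t,s)v=[A(t')-A(t)]W(t',s)v+A(t)[W(t',s)v-W(t,s)v]$ and use that $t\mapsto A(t)$ is continuous in $\mathcal{L}(Y,X)$, that $W(t',s)v$ is bounded in $Y$, and that $W(t,s)v$ is continuous in $Y$ by $(E_5)$. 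A continuous $X$-valued function with continuous right-derivative is of class $\mathcal{C}^1$, which gives the claim; integrating, $h^{-1}\big(W(t+h,t)g-g\big)=h^{-1}\int_t^{t+h}A(\tau)W(\tau,t)g\,d\tau$ for every $g\in Y$.

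Next I would differentiate the Duhamel term $w(t):=\int_s^t W(t,r)f(r)\,dr$. Writing, for $h>0$,
\[
w(t+h)-w(t)=\int_t^{t+h}W(t+h,r)f(r)\,dr+\int_s^t\big(W(t+h,t)-I\big)W(t,r)f(r)\,dr,
\]
the first term divided by $h$ tends to $f(t)$ by joint strong continuity of $W$ and continuity of $f$, while for the second, inserting the representation from the previous step with $g=W(t,r)f(r)\in Y$ (so $W(\tau,t)W(t,r)=W(\tau,r)$ since $r\le t\le\tau$) gives $h^{-1}\int_s^t\int_t^{t+h}A(\tau)W(\tau,r)f(r)\,d\tau\,dr$; the inner average converges to $A(t)W(t,r)f(r)$ in $X$ and is dominated by $\big(\sup_\tau\|A(\tau)\|_{Y\to X}\big)\big(\sup_{\tau,r}\|W(\tau,r)\|_{\mathcal{L}(Y)}\big)\|f\|_{\mathcal{C}([s,T];Y)}$, so dominated convergence yields $A(t)\int_s^t W(t,r)f(r)\,dr=A(t)w(t)$. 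Hence $u$ has right-derivative $A(t)u(t)+f(t)$ at every $t\in[s,T)$; since $u$ is continuous in $Y$, the map $t\mapsto A(t)u(t)$ is continuous in $X$ (same splitting as above), so $A(t)u(t)+f(t)$ is continuous and therefore $u\in\mathcal{C}^1([s,T];X)$ with $u'=A(t)u+f$; evaluating at $t=s$ gives $u(s)=W(s,s)v=v$.

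Finally, for uniqueness, let $z$ be the difference of two $Y$-valued solutions, so $z\in\mathcal{C}([s,T];Y)\cap\mathcal{C}^1([s,T];X)$, $z'=A(t)z$ and $z(s)=0$. For fixed $t$ and $\tau\in[s,t]$ consider $\tau\mapsto W(t,\tau)z(\tau)$: by $(E_3)$ (applicable since $z(\tau)\in Y$) and the product rule its derivative is $-W(t,\tau)A(\tau)z(\tau)+W(t,\tau)z'(\tau)=0$, so it is constant; its value at $\tau=s$ is $W(t,s)z(s)=0$ and at $\tau=t$ it is $z(t)$, whence $z(t)=0$. The main obstacle is the differentiation of the Duhamel integral in $X$: it hinges on first promoting $(E_2)$ to the identity $\frac{d}{dt}W(t,s)v=A(t)W(t,s)v$ on all of $[s,T]$, and on justifying the passage of the limit through the integral, for which the uniform $\mathcal{L}(Y)$-bound of $W$ and the boundedness of $A(\cdot)$ in $\mathcal{L}(Y,X)$ furnished by the Kato-condition are exactly what is needed.
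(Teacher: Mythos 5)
The paper does not give its own proof of this statement: it cites it directly from Pazy \cite[Chapter~5, Theorem~5.2]{Paz83}, with the remark after the theorem only pointing out that the proof there actually yields $\mathcal{C}^1$ up to $t=s$. Your argument is a correct self-contained reconstruction of that standard proof, using exactly the expected ingredients — $(E_2)$ and the cocycle identity to upgrade to $\frac{d}{dt}W(t,s)v=A(t)W(t,s)v$, the Duhamel decomposition with dominated convergence to differentiate the integral term, and $(E_3)$ for uniqueness — so there is no substantive difference in approach.
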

\begin{rmk}
The definition of $Y$-valued solution only requires that $u$ belongs to $\mathcal{C}([s,T]; Y) \cap \mathcal{C}^1((s,T]; X)$ (see Definition 4.1 in \cite{Paz83}), but from the proof of Theorem 5.2 in \cite{Paz83}, we actually have $u\in\mathcal{C}([s,T]; Y) \cap \mathcal{C}^1([s,T]; X)$.
\end{rmk}

\section{Classical lemmas}\label{sec-classical-lemmas}
In this appendix, we collect some essential ingredients for Sobolev spaces (see e.g. Chapter 13 in \cite{Tay97} or Appendix C in \cite{BS07}).
\begin{lemma}\label{lemb.1}
Assume that ${\mathcal U}$ is a regular open set of $\mathbb{R}^d$, where $d$ is the dimension of the space. 
\begin{enumerate}[i$)$]
\item Consider $u$ and $v$ which both belong to $L^\infty({\mathcal U}) \cap H^s({\mathcal U})$ with $s>0$. Then their product also belongs to $H^s({\mathcal U})$ and there exists $C>0$ depending only on $s$ and ${\mathcal U}$ such that
$$
\aiminnorm{uv}_{H^s({\mathcal U})} \leq C( \aiminnorm{u}_{L^\infty({\mathcal U})} \aiminnorm{v}_{H^s({\mathcal U})} +\aiminnorm{v}_{L^\infty({\mathcal U})} \aiminnorm{u}_{H^s({\mathcal U})} ).
$$
If $s>d/2$, then the $L^\infty$ assumption automatically follows from the Sobolev embedding, and we have the following estimate:
$$
\aiminnorm{uv}_{H^s({\mathcal U})} \leq C \aiminnorm{u}_{H^s({\mathcal U})}\aiminnorm{v}_{H^s({\mathcal U})}.
$$
\item Let $\mathcal{F}$ be a $\mathcal{C}^\infty$ function on $\mathbb{R}$ such that $\mathcal{F}(0)=0$. Then there exists a continuous function $C:[0,+\infty)\mapsto[0,+\infty)$ such that for all $u\in H^s(\mathcal U)\cap L^\infty(\mathcal U)$ with $s\geq 0$:
$$ \aiminnorm{\mathcal{F}(u)}_{H^s(\mathcal U)} \leq C(\aiminnorm{u}_{L^\infty(\mathcal U)})\aiminnorm{u}_{H^s(\mathcal U)}.$$
If $s>d/2$, then the $L^\infty$ assumption automatically follows from the Sobolev embedding, and if furthermore we assume that $\mathcal U$ is bounded and that $u$ is positive away from $0$, i.e. $\aiminabs{u}\geq \epsilon_0$ for some positive $\epsilon_0$, then we have
$$\frac{1}{u}\in H^s(\mathcal U),$$
if we choose $\mathcal{F}$ to be a $\mathcal{C}^\infty$ function such that $\mathcal{F}(x)=0$ for $\aiminabs{x}\leq \epsilon_0/2$ and $\mathcal{F}(x)=1/x$ for $\aiminabs{x}\geq \epsilon_0$.

\item \label{en:item2} If $k$ is an integer greater than $d/2 + 1$ and $\alpha$ is a $d$-tuple of length $\aiminabs{\alpha}\in[1,k]$, there exists $C>0$ depending only on $k$ and ${\mathcal U}$ such that for all $a$ in $H^k({\mathcal U})$ and all $u\in H^{ \aiminabs{\alpha} -1}({\mathcal U})$, we have the following estimate:
$$
\aiminnorm{ [\partial^\alpha, a]u }_{L^2({\mathcal U})} \leq C \aiminnorm{a}_{H^k({\mathcal U})} \aiminnorm{u}_{H^{\aiminabs{\alpha}-1}({\mathcal U})}.
$$

\end{enumerate}
\end{lemma}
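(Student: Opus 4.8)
The plan is to reduce all three estimates to the whole space $\mathbb{R}^d$ and then invoke Gagliardo--Nirenberg interpolation, supplemented in the non-integer Sobolev range by the Littlewood--Paley calculus. Since $\mathcal{U}$ is a regular (extension) domain, I would fix an extension operator $P$ bounded both from $H^s(\mathcal{U})$ into $H^s(\mathbb{R}^d)$ and from $L^\infty(\mathcal{U})$ into $L^\infty(\mathbb{R}^d)$ (e.g.\ Stein's), extend $u,v,a$, prove the estimates on $\mathbb{R}^d$, and restrict back — which only decreases the norms. On $\mathbb{R}^d$ I will use repeatedly the interpolation inequality $\aiminnorm{D^j w}_{L^{2\ell/j}}\leq C\aiminnorm{w}_{L^\infty}^{1-j/\ell}\aiminnorm{D^\ell w}_{L^2}^{j/\ell}$ for $0\leq j\leq\ell$, together with the Sobolev embeddings $H^\sigma(\mathbb{R}^d)\hookrightarrow L^q$ with $\tfrac1q=(\tfrac12-\tfrac{\sigma}{d})_+$ (and into $L^\infty$ when $\sigma>d/2$). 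Note that part \emph{(iii)} involves only integer-order derivatives, hence can be carried out directly on $\mathcal{U}$ with no harmonic analysis.

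For \emph{(i)} with $s=\ell$ a nonnegative integer I would, for each $|\alpha|\leq\ell$, expand $\partial^\alpha(uv)$ by the Leibniz rule, treat the endpoint terms $u\,\partial^\alpha v$ and $v\,\partial^\alpha u$ directly (they are bounded by $\aiminnorm{u}_{L^\infty}\aiminnorm{v}_{H^\ell}$ and $\aiminnorm{v}_{L^\infty}\aiminnorm{u}_{H^\ell}$), and handle every interior term $(\partial^\beta u)(\partial^{\alpha-\beta}v)$ by Hölder in the conjugate exponents $\tfrac{2\ell}{|\beta|},\tfrac{2\ell}{|\alpha-\beta|}$ followed by the interpolation inequality above; since $|\beta|+|\alpha-\beta|=|\alpha|\leq\ell$, Young's inequality combines the resulting factors into $C(\aiminnorm{u}_{L^\infty}\aiminnorm{v}_{H^\ell}+\aiminnorm{v}_{L^\infty}\aiminnorm{u}_{H^\ell})$. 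For general real $s>0$ I would instead use Bony's paraproduct decomposition $uv=T_uv+T_vu+R(u,v)$ on $\mathbb{R}^d$, with the classical bounds $\aiminnorm{T_fg}_{H^s}\lesssim\aiminnorm{f}_{L^\infty}\aiminnorm{g}_{H^s}$ (every $s$) and $\aiminnorm{R(f,g)}_{H^s}\lesssim\aiminnorm{f}_{L^\infty}\aiminnorm{g}_{H^s}$ ($s>0$), applied once with $(f,g)=(u,v)$ and once with $(f,g)=(v,u)$. The second inequality of \emph{(i)} is then immediate: if $s>d/2$ then $\aiminnorm{u}_{L^\infty},\aiminnorm{v}_{L^\infty}\leq C_s(\aiminnorm{u}_{H^s}+\aiminnorm{v}_{H^s})$ by Sobolev embedding, so the two terms collapse to $C\aiminnorm{u}_{H^s}\aiminnorm{v}_{H^s}$.

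For \emph{(ii)}, set $M_j=\sup_{|t|\leq\aiminnorm{u}_{L^\infty}}|\mathcal{F}^{(j)}(t)|$, a continuous non-decreasing function of $\aiminnorm{u}_{L^\infty}$. The case $s=0$ follows from $|\mathcal{F}(u)|=|\mathcal{F}(u)-\mathcal{F}(0)|\leq M_1|u|$ pointwise. For integer $s=\ell\geq1$, Fa\`{a} di Bruno's formula expands $\partial^\alpha\mathcal{F}(u)$, $|\alpha|=\ell$, into a finite sum of terms $\mathcal{F}^{(j)}(u)\,\partial^{\beta_1}u\cdots\partial^{\beta_j}u$ with $\beta_i\neq0$ and $\sum_i|\beta_i|=\ell$; bounding $\aiminnorm{\mathcal{F}^{(j)}(u)}_{L^\infty}\leq M_j$ and estimating the product of derivatives by Hölder and the interpolation inequality exactly as in \emph{(i)} gives $\aiminnorm{\partial^\alpha\mathcal{F}(u)}_{L^2}\leq C(\aiminnorm{u}_{L^\infty})\aiminnorm{u}_{H^\ell}$, which together with the $s=0$ case yields the full $H^\ell$ bound. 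For non-integer $s$ one applies this to the top $\lfloor s\rfloor$ derivatives and controls the remaining fractional derivative of each such product by the fractional Leibniz (Kato--Ponce) inequality together with the estimate that the $H^\sigma$ seminorm of $\mathcal{F}^{(j)}(u)$ is at most $M_{j+1}$ times that of $u$ (mean value theorem) — equivalently, one invokes Bony's linearization of $\mathcal{F}(u)$ (cf.\ \cite{Tay97,BS07}). Finally, when $\mathcal{U}$ is bounded and $|u|\geq\epsilon_0>0$, choosing $\mathcal{F}\in\mathcal{C}^\infty(\mathbb{R})$ with $\mathcal{F}(x)=0$ for $|x|\leq\epsilon_0/2$ and $\mathcal{F}(x)=1/x$ for $|x|\geq\epsilon_0$ gives $\mathcal{F}(0)=0$ and $\mathcal{F}(u)=1/u$ a.e.\ on $\mathcal{U}$, so the composition estimate just proved shows $1/u\in H^s(\mathcal{U})$.

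For \emph{(iii)}, write the commutator as the Leibniz remainder $[\partial^\alpha,a]u=\sum_{0\neq\beta\leq\alpha}\binom{\alpha}{\beta}(\partial^\beta a)(\partial^{\alpha-\beta}u)$ for $1\leq|\alpha|\leq k$, so it suffices to bound each summand by $C\aiminnorm{a}_{H^k}\aiminnorm{u}_{H^{|\alpha|-1}}$. Here $\partial^\beta a\in H^{k-|\beta|}$ and, since $|\alpha-\beta|=|\alpha|-|\beta|\leq|\alpha|-1$, $\partial^{\alpha-\beta}u\in H^{|\beta|-1}$, with norms controlled by $\aiminnorm{a}_{H^k}$ and $\aiminnorm{u}_{H^{|\alpha|-1}}$ respectively. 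I would then pick $p,q$ with $\tfrac1p+\tfrac1q=\tfrac12$ so that $H^{k-|\beta|}\hookrightarrow L^p$ and $H^{|\beta|-1}\hookrightarrow L^q$ (a factor with supercritical Sobolev index being placed in $L^\infty$, the other in $L^2$); this is possible because the least admissible value of $\tfrac1p+\tfrac1q$, namely $(\tfrac12-\tfrac{k-|\beta|}{d})_+ + (\tfrac12-\tfrac{|\beta|-1}{d})_+$, never exceeds $\tfrac12$ — the binding case being both exponents subcritical, where this sum equals $1-\tfrac{k-1}{d}$ and the hypothesis $k>d/2+1$ forces it below $\tfrac12$. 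Hölder then closes the bound. The main points to watch are precisely this exponent bookkeeping in \emph{(iii)} and, for \emph{(i)}--\emph{(ii)}, the Littlewood--Paley arguments needed in the non-integer range — although every application of the lemma in this paper is at integer order, where the elementary Leibniz / Fa\`{a} di Bruno plus Gagliardo--Nirenberg proofs above are self-contained.
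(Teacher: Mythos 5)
The paper does not actually prove Lemma~\ref{lemb.1}: Appendix~\ref{sec-classical-lemmas} simply states it and cites Chapter~13 of \cite{Tay97} and Appendix~C of \cite{BS07}. So there is no paper proof to compare against; what you have written is a reconstruction of the standard arguments, and it is essentially correct.

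A few remarks on the details. For part \emph{(i)}, the H\"older-plus-interpolation treatment of the interior Leibniz terms as you wrote it (exponents $2\ell/|\beta|$, $2\ell/|\alpha-\beta|$) is calibrated for $|\alpha|=\ell$; for $|\alpha|<\ell$ the exponents sum to something strictly less than $\tfrac12$ and one should either adjust them to $2|\alpha|/|\beta|$, etc., or simply note that the lower-order cases follow from the top-order case applied with $\ell$ replaced by $|\alpha|$. For part \emph{(iii)}, your exponent bookkeeping is the heart of the argument and is correct; the one place that deserves a word is the borderline Sobolev index (e.g.\ $d=2$, $k=3$, $|\beta|=2$, so $k-|\beta|=|\beta|-1=d/2=1$), where $H^{d/2}\not\hookrightarrow L^\infty$ but $H^{d/2}\hookrightarrow L^q$ for every finite $q$; because your sum of lower bounds is \emph{strictly} below $\tfrac12$, you have the slack to choose finite $p,q$ there, and it is worth saying so explicitly. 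For part \emph{(ii)} in the non-integer range, ``the $H^\sigma$ seminorm of $\mathcal{F}^{(j)}(u)$ is at most $M_{j+1}$ times that of $u$ by the mean value theorem'' is literally true only for $0<\sigma<1$ via the Gagliardo--Slobodeckij seminorm; for larger non-integer $\sigma$ one has to iterate or invoke Bony's paralinearization properly, as you acknowledge by deferring to the references. Since every invocation of the lemma in the paper is at integer $s$, the integer-order arguments you give (Leibniz + Gagliardo--Nirenberg, Fa\`a di Bruno + Gagliardo--Nirenberg, and the exponent count for the commutator) are the self-contained core, and those are sound.
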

\section*{Acknowledgments}
This work was partially supported by the National Science Foundation under the grants NSF DMS-0906440 and DMS-1206438, and by the Research Fund of Indiana University.

\bibliographystyle{amsalpha}

\providecommand{\bysame}{\leavevmode\hbox to3em{\hrulefill}\thinspace}
\providecommand{\MR}{\relax\ifhmode\unskip\space\fi MR }
\providecommand{\MRhref}[2]{%
  \href{http://www.ams.org/mathscinet-getitem?mr=#1}{#2}
}
\providecommand{\href}[2]{#2}

\end{document}